\providecommand{\U}[1]{\protect\rule{.1in}{.1in}}
\newtheorem{thm}{Theorem}[section]
\newtheorem{coro}[thm]{Corollary}
\newtheorem{lem}[thm]{Lemma}
\newtheorem{prop}[thm]{Proposition}
\theoremstyle{definition}
\newtheorem{definition}[thm]{Definition}
\newtheorem{rem}[thm]{Remark}
\newtheorem{example}[thm]{Example}
\numberwithin{equation}{section}
\begin{document}
\title{ New properties on normalized null hypersurfaces.}
\author{Cyriaque Atindogbe}
\address{Cyriaque Atindogbe: Universit\'{e} d'Abomey-Calavi.}
\email{atincyr@imsp-uac.org}
\author{Manuel Guti\'{e}rrez}
\address{Manuel Guti\'{e}rrez: Universidad de M\'{a}laga.}
\email{m\_gutierrez@uma.es}
\author{Raymond Hounnonkpe}
\address{Raymond Hounnonkpe: Universit\'{e} d'Abomey-Calavi. Institute de
Math\'{e}matiques et de Sciences Physiques (IMSP).}
\email{rhounnonkpe@ymail.com}

\begin{abstract}
Rigging technique introduced in \cite{bi0} is a convenient way to address the
study of null hypersurfaces. It offers in addition the extra benefit of
inducing a Riemannian structure on the null hypersurface which is used to
study geometric and topological properties on it. In this paper we develop
this technique showing new properties and applications. We first discuss the
very existence of the rigging fields under prescribed geometric and
topological constraints. We consider the completeness of the induced rigged
Riemannian structure. This is potentially important because it allows to use
most of the usual Riemannian techniques.

\end{abstract}
\maketitle

\noindent{\small \textbf{{Keywords.~}}} Lorentzian manifolds, null
hypersurface, normalization, rigging vector field, rigged vector field, completeness.

\noindent{\small \textbf{{MSC(2010).~}}} 53C50, 53B30, 53B50.

\section{Introduction}

A null hypersurface in a spacetime is a smooth codimension one submanifold
such that the ambient metric degenerates when restricted to it. Null
hypersurfaces play an important role in general relativity, as they represent
horizons of various sorts (event horizon of a black hole, Killing horizon,
etc.) and include lightcones. The main drawback to study them as part of
standard submanifold theory is the degeneracy of the induced metric. Some
attempts to overcome this difficlty have had remarkable succes. In \cite{DB},
the approach consists in fixing a geometric data formed by a null section and
a screen distribution on the null hypersurface. This allows to induce some
geometric objects such as a connection, a null second fundamental form and
Gauss-Codazzi type equations. In \cite{kupeli} the author uses the quotient
vector bundle $TM/TM^{\perp}$ to "get rid" of the degeneracy of the induced
metric. Returning to the approach in \cite{DB}, the basic question is how to
reduce as much as possible the arbitrary choices and to have a reasonable
coupling between the properties of the null hypersurface and the ambient
space. In \cite{bi0}, the authors used the rigging technique to study null
hypersurfaces. It is based on the arbitrary choice of a unique vector field in
a neighborhood of the null hypersurface, called rigging vector field, from
which is constructed both a null section defined on the null hypersurface,
called rigged vector field, and a screen distribution. This rigging technique
has also the advantage to induce on the whole null hypersurface a Riemannian
structure coupled with the rigging, which is used as a bridge to study the
null hypersurface. The null geometry of the hypersurface is related to the
properties of the induced Riemannian structure on the hypersurface, allowing
handle it using Riemannian geometry. The question now arise of knowing wether
it is always possible to operate a choice of a rigging vector field with fixed
geometric properties (closedness, conformality, causality conditions, etc. )
but also with geometric prescribed properties for the induced rigged
Riemannian structure (completeness, pinching constraints, geodesibility,
etc.). This is our concern in the present paper. The fact that there is a
positive answer to a reasonable amount of the above questions reinforces our
opinion that the rigging technique can be a good tool in this theory.

In Section \ref{riggedstructure} we review some facts about null
hypersurfaces, fix notations and give two technical lemmas. Obstruction
results involving both topology and prescribed geometric conditions on the
rigging vector field are established in Section \ref{vanishingtheorems}, e.g.
Theorem \ref{thm1.1}. The completeness properties of the induced Riemannian
metric are considered in Section \ref{completeness}. The first part is
concerned with some splitting results on the hypersurface equipped with its
rigged Riemannian structure. This allows us to get completeness sufficient
conditions in Robertson-Walker spaces, e.g. Theorem \ref{warp1} and
Proposition \ref{warp2}. After this, we consider the case of
Generalized-Robertson-Walker (GRW) spaces. We show that there are natural
rigging using the warping function leading to a complete induced Riemannian
structure e.g. Proposition \ref{grw1}. Finally, we show using closedness
argument on the rigging field and compacity of the screen leaves that the
induced Riemannian structure is complete e.g. Theorem \ref{main}. In
Section~\ref{applic}, we establish some results on null hypersurfaces under
completeness assumption of the induced Riemannian metric. In
subsection~\ref{ricciestimates}, we establish some estimates on mean curvature
on null hypersurfaces with complete rigged Riemannian structure e.g.
Theorem~\ref{thm1}, Theorem~\ref{thm3}, Theorem~\ref{iso}. Finally,
Section~\ref{semidefinitesharp} deals with null hypersurfaces for which the
screen shape operator is semi definite. We prove some obstruction results on
the existence of closed geodesics e.g. Proposition~\ref{prop3.1}, and show
under a completeness condition that the manifold structure of the null
hypersurface (say) $M$ in a 3-dimensional simply connected Lorentzian manifold
with no closed null curve is diffeomorphic to the plane or the cylinder e.g.
Theorem~\ref{thm4b}. Finally, we investigate about the existence of
topologically closed totally geodesic null hypersufaces in Robertson-Walker
spaces and prove non existence of lightlike line in some cases e.g.
Theorem~\ref{rober} and Corollary~\ref{corline}.

\section{ Normalization and rigged Riemannian structure}

\label{riggedstructure} Let $(\overline{M},\overline{g})$ be a $(n+2)$%
-dimensional Lorentzian manifold and $M$ a null hypersurface in $\overline{M}%
$. This means that at each $p\in M$, the restriction $\overline{g_{p}}%
_{|T_{p}M}$ is degenerated, that is there exists a non-zero vector $U\in
T_{p}M$ such that $\overline{g}(U,X)=0$ for all $X\in T_{p}M$. Hence, in null
setting, the normal bundle $TM^{\perp}$ of the null hypersurface $M^{n+1}$ is
a rank $1$ vector subbundle of the tangent bundle $TM$, contrary to the
classical theory of non-degenerate hypersurfaces for which the normal bundle
has trivial intersection $\{0\}$ with the tangent one playing an important
role in introducing the main induced geometric objects on $M$. Let us start
with the usual tools involved in the study of such hypersurfaces according
to~\cite{DB}. They consist in fixing on the null hypersurface a geometric data
formed by a null section and a screen distribution. By \textit{screen
distribution} on $M^{n+1}$, we mean a complementary bundle of $TM^{\perp}$ in
$TM$. It is then a rank $n$ non-degenerate distribution over $M$. In fact,
there are infinitely many possibilities of choices for such a distribution.
Each of them is canonically isomorphic to the factor vector bundle
$TM/TM^{\perp}$. For reasons that will become obvious in few lines below, let
us denote such a distribution by $\mathscr{S}(N)$. We then have,
\begin{equation}
TM=\mathscr{S}(N)\oplus TM^{\perp},\label{eq:4}%
\end{equation}
where $\oplus$ denotes the orthogonal direct sum. From \cite{DB}, it is known
that for a null hypersurface equipped with a screen distribution, there exists
a unique rank~$1$ vector subbundle $tr(TM)$ of $T\overline{M}$ over $M$, such
that for any non-zero section $\xi$ of $TM^{\perp}$ on a coordinate
neighborhood ${\mathscr{U}}\subset M$, there exists a unique section $N$ of
$tr(TM)$ on ${\mathscr{U}}$ satisfying
\begin{equation}
\overline{g}(N,\xi)=1,\quad\overline{g}(N,N)=\overline{g}(N,W)=0,\quad
\quad\forall W\in\mathscr{S}(N)|_{\mathscr{U}}).\label{eq:3}%
\end{equation}

Then $T\overline{M}$ admits the splitting:
\begin{equation}
T\overline{M}|_{M}=TM\oplus tr(TM)=\{TM^{\perp}\oplus tr(TM)\}\oplus
\mathscr{S}(N). \label{eq:5}%
\end{equation}

We call $tr(TM)$ a \textit{(null) transverse vector bundle} along $M$. In
fact, from (\ref{eq:3}) and (\ref{eq:5}) one shows that, conversely, a choice
of a transversal bundle $tr(TM)$ determines uniquely the screen distribution
$\mathscr{S}(N)$. A vector field $N$ as in (\ref{eq:3}) is called a
\textit{null transverse vector field} of $M$. It is then noteworthy that the
choice of a null transversal vector field $N$ along $M$ determines both the
null transverse vector bundle, the screen distribution $\mathscr{S}(N)$ and a
unique radical vector field, say $\xi$, satisfying (\ref{eq:3}).

Before continuing our discussion, we need to clarify the (general) concept of
rigging for our null hypersurface.

\begin{definition}
\label{rigging1} Let $M$ be a null hypersurface in a Lorentzian manifold. A
rigging for $M$ is a vector field $\zeta$ defined on some open set containing
$M$ such that $\zeta_{p}\notin T_{p}M$ for each $p\in M$.
\end{definition}

Given a rigging $\zeta$ in a neighborhood of $M$ in $(\overline{M}%
,\overline{g})$, let $\alpha$ denote the $1$-form $\overline{g}$-metrically
equivalent to $\zeta$, i.e $\alpha=\overline{g}(\zeta,~.)$. Take
$\omega=i^{\star}\alpha$, being $i:M\hookrightarrow\overline{M}$ the canonical
inclusion. Next, consider the tensors
\begin{equation}
\overset{\smile}{g}=\overline{g}+\alpha\otimes\alpha\quad\mbox{and}\quad
\widetilde{g}=i^{\star}\overset{\smile}{g}. \label{garc}%
\end{equation}

It is easy to show that $\widetilde{g}$ defines a Riemannian metric on the
(whole) hypersurface $M$. The \textit{rigged vector field} of $\zeta$ is the
$\widetilde{g}$-metrically equivalent vector field to the $1$-form $\omega$
and it is denoted by $\xi$. In fact the rigged vector field $\xi$ is the
unique lightlike vector field in $M$ such that $\overline{g}(\zeta,\xi)=1$.
Moreover, $\xi$ is $\widetilde{g}$-unitary. A screen distribution on $M$ is
given by
\[
\mathscr{S}(\zeta)=TM\cap\zeta^{\perp}.
\]
It is the $\widetilde{g}$-orthogonal subspace to $\xi$ and the corresponding
null transverse vector field to $\mathscr{S}(\zeta)$ is
\begin{equation}
N=\zeta-\frac{1}{2}\overline{g}(\zeta,\zeta)\xi.\label{nulrigging1}%
\end{equation}

A null hypersurface $M$ equipped with a rigging $\zeta$ is said to be
normalized and is denoted $(M,\zeta)$ (the latter is called a normalization of
the null hypersurface). A normalization $(M,\zeta)$ is said to be closed
(resp. conformal) if the rigging $\zeta$ is closed i.e the $1$-form $\alpha$
is closed (resp. $\zeta$ is a conformal vector field, i.e there exists a
function $\rho$ on the domain of $\zeta$ such that $L_{\zeta}\overline
{g}=2\rho\overline{{g}}$). We say that $\zeta$ is a \textit{null rigging} for
$M$ if the restriction of $\zeta$ to the null hypersurface $M$ is a null
vector at each point in $M$.

Let $\zeta$ be a rigging for a null hypersurface in a Lorentzian manifold
$(\overline{M},\overline{g})$. The screen distribution $\mathscr{S}(\zeta
)=\ker\omega$ is integrable whenever $\omega$ is closed, in particular if the
rigging is closed. On a normalized null hypersurface $(M,\zeta)$, the Gauss
and Weingarten formulas are given by
\begin{align}
\overline{\nabla}_{X}Y  &  =\nabla_{X}Y+B(X,Y)N,\label{eq:6a}\\
\overline{\nabla}_{X}N  &  =-A_{N}X+\tau(X)N,\label{eq:6b}\\
\nabla_{X}PY  &  =\overset{\star}{\nabla}_{X}PY+C(X,PY)\xi,\label{eq:6c}\\
\nabla_{X}\xi &  =-\overset{\star}{A}_{\xi}X-\tau(X)\xi, \label{eq:6d}%
\end{align}
for any $X,Y\in\Gamma(TM)$, where $\overline{\nabla}$ denotes the Levi-Civita
connection on $(\overline{M},\overline{g})$, $\nabla$ denotes the connection
on $M$ induced from $\overline{\nabla}$ through the projection along the null
transverse vector field $N$ and $\overset{\star}{\nabla}$ denotes the
connection on the screen distribution $\mathscr{S}(\zeta)$ induced from
$\nabla$ through the projection morphism $P$ of $\Gamma(TM)$ onto
$\Gamma\left(  \mathscr{S}(\zeta)\right)  $ with respect to the decomposition
(\ref{eq:4}). The $(0,2)$ tensor $B$ is the null second fundamental form on
$TM$, $\overset{\star}{A}_{\xi}$ the shape operator on $TM$ with respect to
the rigged vector field $\xi$ and $\tau$ a $1$-form on $TM$ defined by
\[
\tau(X)=\overline{g}(\overline{\nabla}_{X}N,\xi).
\]

The null second fundamental form $B$ is symmetric, whereas the tensor $C$ is
not in general. The following holds
\begin{equation}
B(X,Y)=g(\overset{\star}{A}_{\xi}X,Y),\quad C(X,PY)=g(A_{N}X,Y)\ \ \forall
X,Y\in\Gamma(TM), \label{eq:6fo}%
\end{equation}
and
\begin{equation}
B(X,\xi)=0,\quad\overset{\star}{A}_{\xi}\xi=0. \label{eqnul}%
\end{equation}

It follows from (\ref{eqnul}) that the integral curves of $\xi$ are
pregeodesics in both $\overline{M}$ and $(M,\nabla)$, as $\overline{\nabla
}_{\xi}\xi=\nabla_{\xi}\xi=-\tau(\xi)\xi$.

A null hypersurface $M$ is said to be \textit{totally umbilic} (resp.
\textit{totally geodesic}) if there exists a smooth function $\rho$ on $M$
such that at each $p\in M$ and for all $u,v\in T_{p}M$, $B(p)(u,v)=\rho
(p)\overline{g}(u,v)$ (resp. $B$ vanishes identically on $M$). These are
intrinsic notions on any null hypersurface in the sense that they are
independent of the normalization. Note that $M$ is \textit{totally umbilic}
(resp. \textit{totally geodesic}) if and only if $\overset{\star}{A}_{\xi
}=\rho P$ (resp. $\overset{\star}{A}_{\xi}=0$). It is noteworthy to mention
that the shape operators $\overset{\star}{A}_{\xi}$ and $A_{N}$ are
$\mathscr{S}(\zeta)$-valued.

The induced connection $\nabla$ is torsion-free, but it does not preserve
$\overline{g}$ except $M$ is totally geodesic. In fact we have for all tangent
vector fields $X,Y$ and $Z$ in $TM$,
\begin{equation}
(\nabla_{X}\overline{g})(Y,Z)=B(X,Y)\omega(Z)+B(X,Z)\omega(Y). \label{eq:derc}%
\end{equation}

The trace of $\overset{\star}{A}_{\xi}$ is the null mean curvature of $M$,
explicitly given by
\[
H_{p}=\sum_{i=2}^{n+1}\overline{g}(\overset{\star}{A}_{\xi}(e_{i}),e_{i}%
)=\sum_{i=2}^{n+1}B(e_{i},e_{i}),
\]
being $(e_{2},\dots,e_{n+1})$ an orthonormal basis of $\mathscr{S}(\zeta)$ at
$p$.

The (shape) operator $\overset{\star}{A}_{\xi}$ is self-adjoint as the null
second fundamental form $B$ is symmetric. However, this is not the case for
the operator $A_{N}$ as it is shown in the following lemma.

\begin{lem}
\cite{bi11} \label{permutan} For all $X,Y\in\Gamma(TM)$,
\begin{equation}
\overline{g}(A_{N}X,Y)-\overline{g}(A_{N}Y,X)=\tau(X)\alpha(Y)-\tau
(Y)\alpha(X)-d\alpha(X,Y). \label{eq:permutan}%
\end{equation}

\end{lem}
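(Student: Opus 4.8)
The plan is to compute $\overline{g}(A_N X, Y)$ directly from the Weingarten formula \eqref{eq:6b} by exploiting the compatibility of $\overline{\nabla}$ with $\overline{g}$. Starting from $\overline{g}(\overline{\nabla}_X N, Y) = -\overline{g}(A_N X, Y) + \tau(X)\overline{g}(N,Y) = -\overline{g}(A_N X,Y) + \tau(X)\alpha(Y)$, where the last step uses that $\overline{g}(N,Y) = \alpha(Y) = \omega(Y)$ for $Y$ tangent to $M$ (recall $\alpha = \overline{g}(\zeta,\cdot)$ and $N = \zeta - \tfrac12\overline{g}(\zeta,\zeta)\xi$, so $\overline{g}(N,Y) = \overline{g}(\zeta,Y) - \tfrac12\overline{g}(\zeta,\zeta)\overline{g}(\xi,Y)$; since $Y\in TM$ and $\xi\in TM^\perp$ this reduces to $\alpha(Y)$). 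Hence
\[
\overline{g}(A_N X, Y) = \tau(X)\alpha(Y) - \overline{g}(\overline{\nabla}_X N, Y).
\]

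Next I would antisymmetrize this identity in $X$ and $Y$:
\[
\overline{g}(A_N X,Y) - \overline{g}(A_N Y,X) = \tau(X)\alpha(Y) - \tau(Y)\alpha(X) - \bigl(\overline{g}(\overline{\nabla}_X N, Y) - \overline{g}(\overline{\nabla}_Y N, X)\bigr).
\]
So it remains to identify the bracketed term with $d\alpha(X,Y)$. For this I would use metric compatibility again to write $\overline{g}(\overline{\nabla}_X N, Y) = X\,\overline{g}(N,Y) - \overline{g}(N, \overline{\nabla}_X Y)$ and likewise with $X,Y$ swapped; subtracting and using that $\overline{\nabla}$ is torsion-free (so $\overline{\nabla}_X Y - \overline{\nabla}_Y X = [X,Y]$) gives
\[
\overline{g}(\overline{\nabla}_X N, Y) - \overline{g}(\overline{\nabla}_Y N, X) = X\,\overline{g}(N,Y) - Y\,\overline{g}(N,X) - \overline{g}(N,[X,Y]).
\]
Since $X,Y,[X,Y]$ are all tangent to $M$, each term $\overline{g}(N,\cdot)$ equals $\alpha(\cdot)$ evaluated on that vector (by the computation above), so the right-hand side is exactly $X\,\alpha(Y) - Y\,\alpha(X) - \alpha([X,Y]) = d\alpha(X,Y)$ by the intrinsic formula for the exterior derivative of a $1$-form.

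Combining the last two displays yields \eqref{eq:permutan}. The only genuinely delicate point is the repeated use of the identity $\overline{g}(N,Z) = \alpha(Z)$ for $Z$ tangent to $M$: one must be careful that it holds for $[X,Y]$ as well, which it does precisely because $[X,Y]\in\Gamma(TM)$ and $\overline{g}(\xi, Z) = 0$ for any such $Z$; everything else is a routine manipulation of the Levi-Civita connection. I expect no serious obstacle beyond keeping track of these tangency conditions and the factor conventions in \eqref{nulrigging1}.
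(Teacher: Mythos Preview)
Your proof is correct. The paper itself does not supply a proof of this lemma, citing it from \cite{bi11}; however, the very same computation you carry out appears implicitly in the paper's proof of the next result, Lemma~\ref{tau1}, where the authors assert (under the closedness hypothesis $d\alpha=0$) that $X\cdot\alpha(Y)-Y\cdot\alpha(X)-\alpha([X,Y])=0$ is equivalent to $\overline{g}(\overline{\nabla}_X N,Y)=\overline{g}(\overline{\nabla}_Y N,X)$ and then invoke the Weingarten formula to obtain $-\overline{g}(A_N X,Y)+\tau(X)\alpha(Y)=-\overline{g}(A_N Y,X)+\tau(Y)\alpha(X)$---which is precisely your identity specialized to $d\alpha=0$. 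So your argument is exactly the expected one.
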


In case the normalization is closed the $1$-form $\tau$ is related to the
shape operator of $M$ as follows.

\begin{lem}
\label{tau1} Let $(M,\zeta)$ be a closed normalization of a null hypersurface
$M$ in a Lorentzian manifold. Then
\begin{equation}
\tau=-\overline{g}(A_{N}\xi,~\cdot~)+\tau(\xi)\alpha. \label{tau2}%
\end{equation}

In particular if $\tau(\xi)=0$, then $A_{N}\xi=-\tau^{\sharp_{\widetilde{g}}}$.
\end{lem}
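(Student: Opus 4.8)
The plan is to start from the defining relation $\tau(X)=\overline g(\overline\nabla_X N,\xi)$ and rewrite it using the structure equations \eqref{eq:6b}--\eqref{eq:6d} together with the closedness hypothesis $d\alpha=0$. The key observation is that the closedness of the normalization feeds directly into Lemma \ref{permutan}: setting $d\alpha=0$ there yields
\begin{equation}
\overline g(A_N X,Y)-\overline g(A_N Y,X)=\tau(X)\alpha(Y)-\tau(Y)\alpha(X)
\end{equation}
for all $X,Y\in\Gamma(TM)$. So the asymmetry of $A_N$ is completely controlled by $\tau$ and $\alpha$ in the closed case, and this is the identity I would exploit.

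First I would specialize that identity by taking $Y=\xi$. Since $\alpha(\xi)=\overline g(\zeta,\xi)=1$ (the rigged vector field is characterized by $\overline g(\zeta,\xi)=1$), the right-hand side becomes $\tau(X)\cdot 1-\tau(\xi)\alpha(X)$, giving
\begin{equation}
\overline g(A_N X,\xi)-\overline g(A_N\xi,X)=\tau(X)-\tau(\xi)\alpha(X).
\end{equation}
Next I need to identify $\overline g(A_N X,\xi)$. From \eqref{eq:6b}, $A_N X=-\overline\nabla_X N+\tau(X)N$, so $\overline g(A_N X,\xi)=-\overline g(\overline\nabla_X N,\xi)+\tau(X)\overline g(N,\xi)=-\tau(X)+\tau(X)=0$, using $\overline g(N,\xi)=1$ and the definition of $\tau$. (Alternatively one reads this off directly from the fact that $A_N$ is $\mathscr S(\zeta)$-valued and $\xi$ is $\widetilde g$-orthogonal to the screen, but the computational route is cleanest.) Substituting $\overline g(A_N X,\xi)=0$ into the displayed equation gives $-\overline g(A_N\xi,X)=\tau(X)-\tau(\xi)\alpha(X)$, i.e. $\tau(X)=-\overline g(A_N\xi,X)+\tau(\xi)\alpha(X)$, which is precisely \eqref{tau2}. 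The final clause is then immediate: if $\tau(\xi)=0$ the formula reads $\tau=-\overline g(A_N\xi,\cdot)$ on $TM$; since $\widetilde g$ and $\overline g$ agree on vectors $\widetilde g$-orthogonal to $\xi$ and $A_N\xi$ lies in the screen, $-\overline g(A_N\xi,\cdot)$ is the $\widetilde g$-metric dual of $-A_N\xi$, hence $A_N\xi=-\tau^{\sharp_{\widetilde g}}$.

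I do not expect a serious obstacle here; the lemma is essentially a one-line consequence of Lemma \ref{permutan}. The only point requiring a little care is the very last identification of $\tau$ with a $\widetilde g$-gradient-type dual: one must check that the $1$-form $-\overline g(A_N\xi,\cdot)$ and $-\widetilde g(A_N\xi,\cdot)$ coincide on all of $TM$, which follows because $A_N\xi\in\mathscr S(\zeta)$ and $\widetilde g=\overline g+\alpha\otimes\alpha$ differs from $\overline g$ only in the $\alpha$ (hence $\xi$) direction, while $\alpha(A_N\xi)=\overline g(\zeta,A_N\xi)$ need not vanish — so one should verify that evaluating both sides on $\xi$ still agrees ($\overline g(A_N\xi,\xi)=0$ as shown above, and $\widetilde g(A_N\xi,\xi)=\overline g(A_N\xi,\xi)+\alpha(A_N\xi)\alpha(\xi)$, which forces the consistency check to use $\tau(\xi)=0$). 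This is the one spot where the hypothesis $\tau(\xi)=0$ is genuinely used, and it should be stated explicitly.
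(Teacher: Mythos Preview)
Your proof is correct and follows the same route as the paper's: both obtain the identity $\overline g(A_N X,Y)-\overline g(A_N Y,X)=\tau(X)\alpha(Y)-\tau(Y)\alpha(X)$ from closedness (you via Lemma~\ref{permutan}, the paper by re-deriving it directly from the Weingarten formula) and then specialize to $Y=\xi$. Your only slip is in the last paragraph: $\alpha(A_N\xi)=\overline g(\zeta,A_N\xi)$ \emph{does} vanish, because $A_N\xi\in\mathscr S(\zeta)=TM\cap\zeta^{\perp}$, so $\widetilde g(A_N\xi,\cdot)=\overline g(A_N\xi,\cdot)$ on all of $TM$ immediately and no separate consistency check at $\xi$ is needed --- this is exactly how the paper dispatches the final clause.
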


\begin{proof}
By the closedness of $\alpha$, the condition
\[
X\cdot\alpha(Y)-Y\cdot\alpha(X)-\alpha([X,Y])=0
\]
is equivalent to%
\[
\overline{g}(\overline{\nabla}_{X}N,Y)=\overline{g}(\overline{\nabla}%
_{Y}N,X).
\]

Then by the weingarten formula, we get
\[
\overline{g}(-A_{N}X,Y)+\tau(X)\alpha(Y)=\overline{g}(-A_{N}Y,X)+\tau
(Y)\alpha(X).
\]

In this relation, take $Y=\xi$ to get
\[
\tau(X)=-\overline{g}(A_{N}\xi,X)+\tau(\xi)\alpha(X)
\]
which gives the desired formula.

Assume now that $\tau(\xi)=0$. Then,
\[
\tau(X)=-\overline{g}(A_{N}\xi,X)=-\widetilde{g}(A_{N}\xi,X),
\]
for all $X\in TM$, as $A_{N}\xi\in\mathscr{S}(\zeta)$. Hence, $A_{N}\xi
=-\tau^{\sharp_{\widetilde{g}}}$.
\end{proof}

\section{Compact null hypersufaces}

\label{vanishingtheorems}The existence of a rigging vector for a null
hypersurface is the first step in the rigging technique. It is clear that in
general it is not possible to choose a rigging vector field, so it is
interesting to identify the situations where you can not choose it. Despite
the trivial cases where there is an obstruction due to the existence of a
nowhere zero vector, the rigged vector, on a compact null hypersurface which
force it to have zero Euler characteristic, there are more subtle situations
that we explore here. Given a compact null hypersurface $M$ in a Lorentzian
manifold $(\overline{M},\overline{g})$, we give some restriction on the
geometric properties of the admissible rigging of $M$ due to the topology of
$M$ that can prevent the existence of some kind of normalization.

\label{closednormalization}

\begin{thm}
\label{thm1.1} Let $(\overline{M},\overline{g})$ be a Lorentzian manifold and
$M$ a compact null hypersurface in $\overline{M}$ with trivial first De Rham
cohomology group $H^{1}(M,\mathbb{R})$. Then $M$ admits no closed normalization.
\end{thm}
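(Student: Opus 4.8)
The plan is to argue by contradiction: suppose $\zeta$ is a closed rigging for $M$, so that the $1$-form $\alpha = \overline{g}(\zeta,\cdot)$ is closed on a neighborhood of $M$, and hence its pullback $\omega = i^\star\alpha$ is a closed $1$-form on $M$. Since $H^1(M,\mathbb{R})=0$, the closed form $\omega$ is exact, say $\omega = df$ for some smooth $f\colon M\to\mathbb{R}$. Because $M$ is compact, $f$ attains a maximum at some point $p\in M$, where $df_p = \omega_p = 0$. The strategy is to show that $\omega_p = 0$ is impossible, since $\omega$ is the $\widetilde{g}$-metric dual of the rigged vector field $\xi$, which is nowhere zero (indeed $\widetilde{g}$-unitary); so $\omega$ never vanishes, contradicting $df_p=0$.

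First I would make precise the identity $\omega = \widetilde{g}(\xi,\cdot)$: by construction $\xi$ is the $\widetilde{g}$-metrically equivalent vector field to $\omega$, so $\omega(X) = \widetilde{g}(\xi,X)$ for all $X\in TM$. Then $\omega(\xi) = \widetilde{g}(\xi,\xi) = 1 \neq 0$ at every point of $M$, since $\xi$ is $\widetilde{g}$-unitary. In particular $\omega_p \neq 0$ for all $p$, so $\omega$ can never equal $df$ at a critical point of a function $f$. Combined with the previous paragraph — exactness of $\omega$ forces a zero of $\omega$ at any maximum of a primitive $f$, and compactness guarantees such a maximum exists — we reach the desired contradiction, proving that no closed normalization exists.

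The only mild subtlety to address carefully is the distinction between $\alpha$ (a $1$-form defined on an open neighborhood of $M$ in $\overline{M}$) and $\omega = i^\star\alpha$ (its restriction to $M$): the hypothesis "closed normalization" gives $d\alpha = 0$ upstairs, hence $d\omega = i^\star(d\alpha) = 0$ downstairs, which is all we need. I expect this pull-back step and the identification $\omega(\xi)=1$ to be entirely routine; there is no real obstacle here, as the proof is essentially a one-line observation once one notes that the rigged vector field never vanishes and its dual $1$-form is the relevant closed form. If one wants to avoid invoking a global primitive, an equivalent formulation is: a nowhere-vanishing closed $1$-form on a compact manifold represents a nonzero class in $H^1(M,\mathbb{R})$ (it cannot be exact, by the maximum-principle argument above), so the existence of such a form already contradicts $H^1(M,\mathbb{R})=0$.
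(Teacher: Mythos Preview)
Your proof is correct and follows essentially the same approach as the paper: both argue that closedness of $\zeta$ gives a closed $\omega=i^\star\alpha$ on $M$, triviality of $H^1(M,\mathbb{R})$ then yields $\omega=df$, and compactness forces a critical point of $f$, contradicting $\omega(\xi)=\widetilde{g}(\xi,\xi)=1$ (equivalently, $|\widetilde{\nabla}f|_{\widetilde{g}}=1$). The only cosmetic difference is that the paper writes the contradiction as $\widetilde{\nabla}f=\xi$ having unit norm, while you phrase it via $\omega_p\neq 0$; these are the same observation.
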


\begin{proof}
Suppose $M$ is compact and suppose that $b_{1}(M)=0$. If there exists a closed
rigging $\zeta$ then the $1$-form $\omega=i^{\star}\alpha$ is closed and there
exists a function $f$ on $M$ such that $df=\omega$, that is $\widetilde
{\nabla}f=\xi$. As a consequence, we have $\widetilde{g}(\widetilde{\nabla
}f,\widetilde{\nabla}f)=1$ which is not possible as $f$ has at least one
critical point on the compact manifold $M$.
\end{proof}

This allows us to prove the following result.

\begin{coro}
\label{cor1} Let $(\overline{M},\overline{g})$ be a Lorentzian manifold with a
closed timelike vector field. Then there is no compact simply connected null
hypersurface in $\overline{M}$.
\end{coro}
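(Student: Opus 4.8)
The plan is to derive Corollary~\ref{cor1} directly from Theorem~\ref{thm1.1} by producing, from a closed timelike vector field on $\overline{M}$, a closed rigging on any candidate compact simply connected null hypersurface $M$. First I would argue that if $M\subset\overline{M}$ is simply connected then $H^{1}(M,\mathbb{R})=0$, so by Theorem~\ref{thm1.1} it suffices to exhibit a closed normalization of $M$; equivalently, we reach a contradiction if we can show that such an $M$ always admits one. So the real content is: a closed timelike vector field $T$ on $\overline{M}$ restricts (or can be used to build) a closed rigging for $M$.

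The key step is to check that $T$ is a rigging, i.e.\ that $T_{p}\notin T_{p}M$ for every $p\in M$. This holds because $T$ is timelike while $T_{p}M$ is a degenerate (null) hyperplane: any timelike vector is transverse to a null hyperplane. Indeed $T_{p}M$ contains its own $\overline{g}$-orthogonal null line $T_{p}M^{\perp}$, and if a timelike $T_p$ lay in $T_pM$ then, decomposing via the screen, $T_p$ would have a component along the null direction and a spacelike screen part, forcing $\overline{g}(T_p,T_p)\ge 0$ on the degenerate hyperplane — contradicting timelikeness. (Concretely, on a null hyperplane the induced metric is positive semidefinite, so it carries no timelike vectors.) Hence $\zeta:=T$ is a rigging for $M$ in the sense of Definition~\ref{rigging1}. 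Since $T$ is closed by hypothesis, the associated $1$-form $\alpha=\overline{g}(T,\cdot)$ is closed, so $(M,\zeta)$ is a closed normalization. Then Theorem~\ref{thm1.1}, applied to the compact manifold $M$ with $b_{1}(M)=0$, yields a contradiction, proving that no such $M$ exists.

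The main obstacle is the transversality verification, i.e.\ confirming that a timelike vector can never be tangent to a null hypersurface; this is the one place where the Lorentzian signature and the degeneracy of $\overline{g}|_{TM}$ are genuinely used, and it should be spelled out carefully rather than asserted. A secondary (routine) point is to confirm compactness of $M$ is part of the hypothesis being invoked — here it is the simply connected $M$ that we assume compact, exactly as in Theorem~\ref{thm1.1}. Everything else (closedness of $\alpha$ transferring from closedness of $T$, and $\pi_{1}(M)=1\Rightarrow H^{1}(M,\mathbb{R})=0$) is immediate.
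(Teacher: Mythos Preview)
Your argument is correct and follows essentially the same route as the paper: use the closed timelike vector field as a closed rigging (the paper just says ``due to signature considerations'' where you spell out the transversality), note that $\pi_{1}(M)=0$ forces $H^{1}(M,\mathbb{R})=0$, and invoke Theorem~\ref{thm1.1} to obtain a contradiction.
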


\begin{proof}
Let $M$ be a compact null hypersurface in $\overline{M}$. Since $(\overline
{M},\overline{g})$ has a closed timelike vector field say $\zeta$, the later
can be used (due to signature considerations) as a rigging for $M$. If we
suppose in addtion $\pi_{1}(M)=0$ which implies that $b_{1}(M)=0$ we get a
contradiction using above Theorem~\ref{thm1.1}. We conclude that if $M$ is
compact then $b_{1}(M)\geq1$ and hence $M$ is not simply connected.
\end{proof}

\begin{rem}
In fact, the above proof shows that if $(\overline{M},\overline{g})$ has a
closed timelike vector field, then there is no compact null hypersurface with
trivial first De Rham cohomology group in $\overline{M}$.
\end{rem}

\begin{prop}
\label{prop2} Let $(\overline{M},\overline{g})$ be a simply connected
Lorentzian manifold, then there is no closed normalization for any compact
null hypersurface in $\overline{M}$.
\end{prop}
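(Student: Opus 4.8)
The plan is to argue by contradiction and to reduce, at the end, to the exact mechanism of Theorem~\ref{thm1.1}. So suppose $M$ is a compact null hypersurface in a simply connected $\overline{M}$ and that $M$ carries a closed normalization $\zeta$, defined on some open neighbourhood $U$ of $M$. Since $\zeta$ is closed, $\alpha=\overline{g}(\zeta,\cdot)$ is a closed $1$-form on $U$, so $\omega=i^{\star}\alpha$ is a closed $1$-form on $M$; moreover $\omega(\xi)=\overline{g}(\zeta,\xi)=1$, so $\omega$ is nowhere vanishing on $M$. (We may assume $M$ connected, arguing componentwise otherwise.) The goal is to show that $\omega$ is in fact exact on $M$ and then to derive a contradiction.

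The central step is to prove $[\omega]=0$ in $H^{1}(M,\mathbb{R})$, i.e. that $\int_{\gamma}\omega=\int_{\gamma}\alpha$ vanishes for every loop $\gamma$ in $M$. The point is that the transversality of the rigging $\zeta$ forces $M$ to be two-sided and, being compact, to admit a bicollar tubular neighbourhood $M\times(-1,1)\subset U$; being two-sided in the simply connected, hence orientable, manifold $\overline{M}$, the hypersurface $M$ is also separating, $\overline{M}\setminus M=V_{+}\sqcup V_{-}$. Given a loop $\gamma$ in $M$, simple connectivity of $\overline{M}$ provides a null-homotopy $F\colon D^{2}\to\overline{M}$ with $F|_{\partial D^{2}}=\gamma$; I would first push $\gamma$ slightly off $M$ into the collar, to a homologous loop lying in $V_{+}$, and then, using the separation of $\overline{M}$ by $M$ together with Stokes' theorem, show that the $\alpha$-period of $\gamma$ is zero (the hypothesis $d\alpha=0$ on the collar being precisely what is needed once the relevant piece of the bounding surface has been brought back inside $U$). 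It then follows that $\omega=df$ for some $f\in C^{\infty}(M)$.

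Once $\omega=df$, the conclusion is immediate and is the argument of Theorem~\ref{thm1.1}: since $\xi$ is the $\widetilde{g}$-metrically equivalent vector field to $\omega=df$, we get $\xi=\widetilde{\nabla}f$, hence $\widetilde{g}(\widetilde{\nabla}f,\widetilde{\nabla}f)=\widetilde{g}(\xi,\xi)=1$ at every point of $M$; but $M$ is compact, so $f$ attains a maximum at some $p\in M$, where $\widetilde{\nabla}f$ vanishes, contradicting $\widetilde{g}(\widetilde{\nabla}f,\widetilde{\nabla}f)=1$ at $p$. This rules out the existence of a closed normalization.

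I expect the delicate point to be exactly the exactness of $\omega$ in the second paragraph: the topological hypothesis lives on $\overline{M}$, whereas the closed form $\alpha$ is a priori only defined near $M$, so simple connectivity of $\overline{M}$ cannot be applied directly to $\alpha$. The transversality of $\zeta$ (which makes $M$ two-sided, collarable and separating) together with the compactness of $M$ must be exploited to transport the vanishing of $\pi_{1}(\overline{M})$ to the periods of $\omega$ on $M$. As an alternative route to the same end one can note that, $\omega$ being closed, the integral curves of $\xi$ are unit-speed $\widetilde{g}$-geodesics on the compact Riemannian manifold $(M,\widetilde{g})$, along which $\int\omega$ grows linearly; combining Poincaré recurrence of the geodesic flow with the vanishing of $\alpha$-periods on null-homotopic loops should again force $\omega$ to be exact, and hence reproduce the contradiction above.
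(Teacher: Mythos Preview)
Your final paragraph is exactly the paper's mechanism, but the paper reaches exactness of $\omega$ by a much shorter route than the one you attempt: it treats the rigging as a vector field on all of $\overline{M}$, so that $\alpha=\overline{g}(\zeta,\cdot)$ is a closed $1$-form on the simply connected manifold $\overline{M}$, hence $\alpha=df$ for some $f\in C^{\infty}(\overline{M})$, and then $\omega=i^{\star}\alpha=d(f\circ i)$ is exact on $M$. From there the critical-point contradiction is identical to yours (and to Theorem~\ref{thm1.1}). So the paper never faces the difficulty you isolate; it simply applies $\pi_{1}(\overline{M})=0$ to the globally defined $\alpha$.

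Your caution about $\zeta$ being defined only on an open set $U\supset M$ is legitimate, but the argument you sketch in the second paragraph does not close the gap. A tubular neighbourhood of $M$ deformation retracts onto $M$, so a closed $1$-form defined only there carries exactly the information of $\omega$; simple connectivity of $\overline{M}$ constrains neither $H^{1}(U)$ nor $H^{1}(M)$. Concretely, the Clifford torus $T^{2}\subset S^{3}$ is compact, two-sided and separating in a simply connected ambient, yet its tubular neighbourhood supports closed non-exact $1$-forms whose metric duals are transverse to $T^{2}$. Your disk argument fails for the same reason: the null-homotopy $F:D^{2}\to\overline{M}$ will in general leave $U$, and separation does not let you confine $F$ to $\overline{V_{+}}$ (in the torus example the pieces $V_{\pm}$ are solid tori, hence not simply connected). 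The Poincar\'e-recurrence alternative is also not available as stated: the $\xi$-flow has no reason to preserve a finite smooth measure on $M$, and even if one had near-returns, closing up the orbit by a short arc gives no control on the $\omega$-period of that arc. In short, if you adopt the paper's reading that $\zeta$ is global, your proof collapses to theirs; if you insist on a rigging defined only near $M$, exactness of $\omega$ is an extra input that your outline does not supply.
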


\begin{proof}
Suppose there exists a compact null hypersurface in $\overline{M}$ with a
closed normalization $\zeta$, then $\alpha=\overline{g}(\zeta,.)$ is a closed
$1$-form on $\overline{M}$ and since $\overline{M}$ is simply connected, there
exists $f:\overline{M}\longrightarrow\mathbb{R}$ such that $\alpha=df$. This
imply that the $\widetilde{g}$-equivalent $1$-form $\omega$ to the rigged
vector field $\xi$ satisfies
\[
\omega=i^{\star}\alpha=i^{\star}df=d(i^{\star}f)=d(f\circ i).
\]

It follows that $\widetilde{\nabla}(f\circ i)=\xi$ and then $\widetilde
{g}(\widetilde{\nabla}(f\circ i),\widetilde{\nabla}(f\circ i))=1$ which is a
contradiction because $f\circ i$ has at least one critical point.
\end{proof}

\begin{definition}
[\cite{ad1}]A normalized null hypersurface $(M,\zeta)$ of a semi-Riemannian
manifold, is screen conformal if the shape operators $A_{N}$ and
$\overset{\star}{A}_{\xi}$ are related by
\begin{equation}
A_{N}=\varphi\overset{\star}{A}_{\xi} \label{eq:22n}%
\end{equation}
where $\varphi$ is a non-vanishing smooth function on $M$.
\end{definition}

As stated in the following theorem, such class of lightlike hypersurfaces has
a geometry which is essentially the same as that of their chosen screen distribution.

\begin{thm}
[\cite{ad1}]Let $(M,S(\zeta))$ be a screen conformal lightlike hypersurface of
a semi-Riemannian manifold $(\overline{M},\overline{g})$. Then the screen
distribution is integrable. Moreover, $M$ is totally geodesic (resp. totally
umbilical or minimal) in $\overline{M}$ if and only if any leaf $M^{\prime}$
of $S(\zeta)$ is totally geodesic (resp. totally umbilical or minimal) in
$\overline{M}$ as a codimension 2 nondegenerate submanifold.
\end{thm}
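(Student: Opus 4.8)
The plan is to identify, for a screen-conformal normalization, the second fundamental form of a leaf of $\mathscr S(\zeta)$ inside $\overline M$ and to read off all three assertions from it.

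First I would prove integrability of $\mathscr S(\zeta)$. For $X,Y\in\Gamma(\mathscr S(\zeta))=\Gamma(TM\cap\zeta^{\perp})$ one has $\alpha(X)=\overline g(\zeta,X)=0=\alpha(Y)$, so Lemma~\ref{permutan} reduces to $\overline g(A_NX,Y)-\overline g(A_NY,X)=-d\alpha(X,Y)$. Since $\omega=i^{\star}\alpha$ and $\omega(X)=\omega(Y)=0$, we get $-d\alpha(X,Y)=-d\omega(X,Y)=\omega([X,Y])$. By screen conformality $A_N=\varphi\,\overset{\star}{A}_{\xi}$ and the symmetry of $B$, the left-hand side equals $\varphi\big(B(X,Y)-B(Y,X)\big)=0$. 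Hence $\omega([X,Y])=0$, i.e. $[X,Y]\in\mathscr S(\zeta)$, so $\mathscr S(\zeta)$ is integrable.

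Next, fix a leaf $M'$. By~\eqref{eq:3} and $\mathscr S(\zeta)=TM\cap\zeta^{\perp}$, both $\xi$ and $N$ are $\overline g$-orthogonal to $\mathscr S(\zeta)$, while $\overline g(\xi,\xi)=\overline g(N,N)=0$ and $\overline g(\xi,N)=1$; thus $\{\xi,N\}$ is a null normal frame along $M'$ and $M'$ is a nondegenerate codimension $2$ submanifold of $\overline M$. For $X,Y\in\Gamma(\mathscr S(\zeta))$, combining the Gauss formula~\eqref{eq:6a} with~\eqref{eq:6c} (and $PX=X$, $PY=Y$) yields $\overline{\nabla}_XY=\overset{\star}{\nabla}_XY+C(X,Y)\xi+B(X,Y)N$. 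Using~\eqref{eq:derc} evaluated on screen vectors (its right-hand side vanishes since $\omega$ does there) together with torsion-freeness of $\nabla$ and the symmetry of $C$ on $\mathscr S(\zeta)$ — itself a consequence of screen conformality — one checks that $\overset{\star}{\nabla}$ restricts on $M'$ to the Levi-Civita connection of the metric induced from $\overline g$. So the displayed identity is exactly the Gauss formula of $M'\subset\overline M$, with second fundamental form $h(X,Y)=C(X,Y)\xi+B(X,Y)N$.

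Now all three equivalences drop out. By~\eqref{eq:6fo} and screen conformality, $C(X,PY)=\overline g(A_NX,Y)=\varphi\,\overline g(\overset{\star}{A}_{\xi}X,Y)=\varphi B(X,Y)$, hence $h(X,Y)=B(X,Y)\,(\varphi\xi+N)$ where $\varphi\xi+N$ is a nowhere-vanishing normal field (as $\xi$ and $N$ are linearly independent). Therefore $h\equiv0$ iff $B$ vanishes on $\mathscr S(\zeta)$, equivalently (since $B(\cdot,\xi)=0$) iff $B\equiv0$ on $TM$, i.e. iff $M$ is totally geodesic; $h(X,Y)=\overline g(X,Y)\,Z$ for some normal $Z$ iff $B=\rho\,\overline g$ on $\mathscr S(\zeta)$, and then $\rho=\tfrac1nH$ is globally smooth on $M$, i.e. iff $M$ is totally umbilical; and $\mathrm{tr}_{\overline g}h=(\mathrm{tr}_{\overline g}C)\xi+HN=H(\varphi\xi+N)$ vanishes iff $H=0$, i.e. iff $M$ is minimal. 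The only step beyond routine bookkeeping with~\eqref{eq:6a}--\eqref{eq:6d},~\eqref{eq:6fo} and Lemma~\ref{permutan} is the identification of $\overset{\star}{\nabla}$ with the induced Levi-Civita connection along a leaf; this is precisely where integrability is needed, with~\eqref{eq:derc} supplying metric compatibility. After that, the single formula $h(X,Y)=B(X,Y)(\varphi\xi+N)$ does all the work.
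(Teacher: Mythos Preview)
The paper does not supply a proof of this theorem; it is quoted as a known result from the cited reference~\cite{ad1}. There is therefore no in-paper proof to compare against.

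Your argument is correct and can be carried out entirely with the tools set up in the present paper. Lemma~\ref{permutan} combined with screen conformality $A_N=\varphi\overset{\star}{A}_{\xi}$ gives $\omega([X,Y])=0$ for $X,Y\in\Gamma(\mathscr S(\zeta))$, hence integrability. The Gauss--Weingarten formulas~\eqref{eq:6a} and~\eqref{eq:6c} then yield $\overline\nabla_XY=\overset{\star}{\nabla}_XY+C(X,Y)\xi+B(X,Y)N$ on a leaf, and your identification of $\overset{\star}{\nabla}$ with the induced Levi-Civita connection is correct: metric compatibility follows from~\eqref{eq:derc} (the right-hand side vanishes on screen vectors) and torsion-freeness follows from that of $\nabla$ together with the symmetry of $C$ on $\mathscr S(\zeta)$, which indeed uses screen conformality and integrability. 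The resulting formula $h(X,Y)=B(X,Y)(\varphi\xi+N)$ with $\varphi\xi+N$ nowhere vanishing immediately delivers all three equivalences; your handling of the totally umbilical case via $\rho=\tfrac1nH$ to pass from leafwise to global umbilicity is the right observation.
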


Our aim is to show that in a 4-dimensional Lorentzian manifold, compact null
hypersurfaces with finite fundamental group can not admit such normalizations.
We prove first the following:

\begin{prop}
\label{prop3} Let $\zeta$ be a rigging for a compact null hypersurface $M$ in
a Lorentzian manifold $(\overline{M},\overline{g})$ of constant curvature. If
the screen $\mathscr{S}(\zeta)$ is conformal and the first De Rham cohomology
group $H^{1}(M,\mathbb{R})$ is trivial, then $M$ is totally geodesic.
\end{prop}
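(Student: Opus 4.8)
The plan is to exploit the fact that constant curvature of $\overline{M}$, combined with screen conformality, forces a strong Ricci-type identity for the shape operator $\overset{\star}{A}_{\xi}$, and then to use the vanishing of $H^{1}(M,\mathbb{R})$ together with compactness to kill the trace part. First I would note that screen conformality $A_{N}=\varphi\overset{\star}{A}_{\xi}$ gives, via \eqref{eq:6fo}, the symmetry $C(X,PY)=\varphi B(X,Y)$, so that by Lemma~\ref{permutan} the $1$-form $\tau$ satisfies $d\alpha(X,Y)=\tau(X)\alpha(Y)-\tau(Y)\alpha(X)$ on $TM$; in particular $\omega=i^{\star}\alpha$ is related to $\tau$ in a controlled way. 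Since the normalization need not be closed I would instead work with the Codazzi equation for $B$: for a null hypersurface, the ambient constant curvature $\overline{c}$ makes $\overline{R}(X,Y)\xi$ proportional to $\overline{g}(Y,\xi)X-\overline{g}(X,\xi)Y$, which is tangent to $M$, so the $N$-component of the Codazzi identity reads
\[
(\nabla_{X}B)(Y,Z)-(\nabla_{Y}B)(X,Z)=\tau(Y)B(X,Z)-\tau(X)B(Y,Z).
\]
This is the key structural equation and it does not require closedness.

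Next I would take a suitable trace of this Codazzi identity over the screen. Tracing in $Y,Z$ against an orthonormal screen frame produces a divergence-type equation relating $dH$ (the differential of the null mean curvature $H=\operatorname{tr}\overset{\star}{A}_{\xi}$), the codifferential of $B$, and the contraction $\tau\circ\overset{\star}{A}_{\xi}$ together with $H\tau$. Using screen conformality once more to turn the $C$-terms into $\varphi B$-terms, and using \eqref{eqnul} to discard the $\xi$-directions, one should be able to reorganize this into the statement that a certain $1$-form built from $H$ and $\tau$ — morally $dH - H\tau$ up to lower-order screen-curvature corrections that vanish because $\overline{M}$ has constant curvature — is exact, or at least closed. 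Here I would invoke $H^{1}(M,\mathbb{R})=0$: a closed $1$-form on $M$ is then exact, so we may write it as $d\log|u|$ for a nowhere-zero function $u$ obtained by integrating $\tau$ (using that $\tau$ restricted to the $\xi$-line is an honest closed form along pregeodesics, cf. the remark after \eqref{eqnul}).

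With that in hand the endgame is a maximum-principle argument on the compact manifold $M$ equipped with the rigged Riemannian metric $\widetilde{g}$. The rescaled mean curvature $\widetilde{H}=u^{-1}H$ should satisfy $d\widetilde{H}=0$ along the screen and a controlled ODE along $\xi$, forcing $\widetilde{H}$ to be constant, hence $H=c\,u$ for a constant $c$; evaluating at a point where $u$ attains its extremum, or integrating $H$ against the volume form of $\widetilde{g}$ and using that $\int_{M}\operatorname{div}_{\widetilde{g}}(\cdot)=0$, pins down $c=0$, so $H\equiv 0$. Finally, to upgrade $H\equiv 0$ to $\overset{\star}{A}_{\xi}\equiv 0$ I would feed $H=0$ back into a Simons-type identity: compute $\widetilde{\Delta}\,|\overset{\star}{A}_{\xi}|^{2}$ (or $\Delta B$ via the twice-traced Codazzi equation), observe that the curvature terms are nonpositive because $\overline{M}$ has constant curvature and the screen is conformal, integrate over compact $M$, and conclude $\nabla\overset{\star}{A}_{\xi}=0$ and then $\overset{\star}{A}_{\xi}=0$; since totally geodesic is normalization-independent, this proves the proposition.

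The main obstacle I anticipate is the bookkeeping in passing from the Codazzi identity to a genuinely \emph{closed} $1$-form: because $\nabla$ does not preserve $\overline{g}$ (equation \eqref{eq:derc}) and $\tau$ is not closed in general, the natural candidate $dH-H\tau$ will pick up correction terms involving $B$ contracted with $\omega$ and with $\overset{\star}{A}_{\xi}$, and one must check that screen conformality plus constant ambient curvature make exactly those terms cancel — this is where the hypotheses are really used, and getting the contraction identities to line up is the delicate point.
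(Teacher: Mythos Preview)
Your proposal has the right instincts but misses the clean mechanism the paper actually uses, and as written several steps are genuine gaps rather than routine bookkeeping.

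The central point you do not reach is that the hypotheses force $d\tau=0$ directly. You work with the $B$-Codazzi component, but the relevant Gauss--Codazzi identity here is the $N$-component
\[
\overline{g}(\overline{R}(X,Y)\xi,N)=C(Y,\overset{\star}{A}_{\xi}X)-C(X,\overset{\star}{A}_{\xi}Y)-2\,d\tau(X,Y).
\]
Constant curvature kills the left-hand side (since $\xi\perp X,Y$), and screen conformality $A_{N}=\varphi\overset{\star}{A}_{\xi}$ makes $C(Y,\overset{\star}{A}_{\xi}X)=C(X,\overset{\star}{A}_{\xi}Y)$, so $d\tau=0$ on the nose. With $H^{1}(M,\mathbb{R})=0$ one gets $\tau=d\phi$, and then the paper \emph{changes the rigging}: setting $\widehat{\zeta}=e^{-\phi}\zeta$ yields $\widehat{\tau}=0$. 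This rigging-rescaling step is what replaces all of your ``$dH-H\tau$ closed'' manoeuvring, and it is where $H^{1}=0$ is actually spent.

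From here there is no need for traced Codazzi, divergence identities, maximum principles, or a Simons formula. With $\widehat{\tau}=0$ and $\overline{Ric}(\widehat{\xi})=0$ (null vector in a constant-curvature space), the Raychaudhuri-type identity $\overline{Ric}(\widehat{\xi})=\widehat{\xi}(\widehat{H})+\widehat{\tau}(\widehat{\xi})\widehat{H}-|\overset{\star}{A}_{\widehat{\xi}}|^{2}$ reduces to the scalar ODE inequality
\[
\widehat{\xi}(\widehat{H})=|\overset{\star}{A}_{\widehat{\xi}}|^{2}\geq \tfrac{1}{n}\widehat{H}^{2}
\]
along integral curves of $\widehat{\xi}$. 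Compactness of $M$ makes $\widehat{\xi}$ complete, and a solution of $y'\geq \tfrac{1}{n}y^{2}$ on all of $\mathbb{R}$ must be identically zero; hence $\widehat{H}\equiv 0$, and feeding this back gives $|\overset{\star}{A}_{\widehat{\xi}}|^{2}=0$, i.e.\ $M$ is totally geodesic.

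Concretely, the gaps in your outline are: (i) you never obtain $d\tau=0$, and your attempt to circumvent it by showing ``$dH-H\tau$ is closed'' is neither established nor sufficient --- writing an exact form as $d\log|u|$ is not automatic, and your parenthetical about $\tau$ along the $\xi$-lines does not give a global primitive; (ii) the integration/maximum-principle step to pin down the constant is left vague; (iii) the Simons-type upgrade from $H=0$ to $\overset{\star}{A}_{\xi}=0$ is both unnecessary and delicate in this degenerate setting (the induced connection $\nabla$ is not metric, cf.\ \eqref{eq:derc}), whereas the Raychaudhuri identity gives it for free.
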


\begin{proof}
Since $M$ is screen conformal there exists a non vanishing function $\rho$
defined on $M$ such that $A_{N}=\rho\overset{\star}{A}_{\xi}$. From the
Gauss-Codazzi equations, see \cite[Page 95, Eq. (3.12)]{DB}
\begin{align}
\overline{g}(\overline{R}(X,Y)\xi,N) &  =C(Y,\overset{\star}{A}_{\xi
}X)-C(X,\overset{\star}{A}_{\xi}Y)\nonumber\label{traf1}\\
&  -2d\tau(X,Y),\quad\forall X,Y\in\Gamma(TM).
\end{align}

But the left hand side of (\ref{traf1}) vanishes since $\overline{M}$ has
constant curvature, and $\xi$ is orthogonal to both $X$ and $Y$. Moreover,
\[
C(Y,\overset{\star}{A}_{\xi}X)-C(X,\overset{\star}{A}_{\xi}Y)=\overline
{g}(A_{N}Y,\overset{\star}{A}_{\xi}X)-\overline{g}(A_{N}X,\overset{\star}%
{A}_{\xi}Y)=0
\]
since $A_{N}=\rho\overset{\star}{A}_{\xi}$. Using the fact that $H^{1}%
(M,\mathbb{R})$ is trivial, there exists a function (say) $\phi$ defined on
$M$ such that $\tau=d\phi$. Define a new rigging vector field by
$\widehat{\zeta}=\exp(-\phi)\zeta$, so $\widehat{N}=\exp(-\phi)N$. Moreover,
it follows (from \cite{At}, Lemma~2.1) that $\widehat{\tau}=\tau+d(\ln
(\exp(-\phi))=0$ as $\tau=d\phi$. Denote respectively by $\widehat{\xi
},\widehat{H},\overset{\star}{A}_{\widehat{\xi}}$ the rigged vector field, the
mean curvature function and the screen shape operator form of $\widehat{\zeta
}$, we have (\cite{bi11}, Remark~3)
\[
\overline{Ric}(\widehat{\xi})=\widehat{\xi}(\widehat{H})+\widehat{\tau
}(\widehat{\xi})\widehat{H}-\lvert\overset{\star}{A}_{\widehat{\xi}}\lvert
^{2}.
\]

But $\overline{Ric}(\widehat{\xi})=0$ since $\overline{M}$ has constant
curvature and $\widehat{\tau}(\widehat{\xi})=0,$ it follows that $\left.
\widehat{\xi}(\widehat{H})-\lvert\overset{\star}{A}_{\widehat{\xi}}\lvert
^{2}=0.\right.  $ Using the inequality $\left.  \lvert\overset{\star}%
{A}_{\widehat{\xi}}\lvert^{2}\geq\frac{1}{n}\widehat{H}^{2}\right.  $, we
obtain $\left.  \widehat{\xi}(\widehat{H})-\frac{1}{n}\widehat{H}^{2}%
\geq0\right.  $, and since $\widehat{\xi}$ is complete ($M$ being compact) we
get that $\widehat{H}=0$. From the relation $\left.  \widehat{\xi}(\widehat
{H})-\lvert\overset{\star}{A}_{\widehat{\xi}}\lvert^{2}=0,\right.  $ it
follows that $\lvert\overset{\star}{A}_{\widehat{\xi}}\lvert^{2}=0$ which
leads to $\overset{\star}{A}_{\widehat{\xi}}=0$. We conclude that $M$ is
totally geodesic.
\end{proof}

We can get now the following result.

\begin{prop}
\label{prop4} Let $(\overline{M}^{4},\overline{g})$ be a 4-dimensional
Lorentzian manifold of constant curvature and $M$ a compact null hypersurface.
If $M$ has finite fundamental group then there is no normalization such that
$M$ is screen conformal.
\end{prop}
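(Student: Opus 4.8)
The plan is to reduce to Proposition~\ref{prop3} and then run an argument in the spirit of Theorem~\ref{thm1.1} for a rescaled rigging. First I would note that a finite fundamental group forces $H^{1}(M;\mathbb{R})=0$: since $H_{1}(M;\mathbb{Z})\cong\pi_{1}(M)^{\mathrm{ab}}$ is finite, $H^{1}(M;\mathbb{R})\cong\operatorname{Hom}(H_{1}(M;\mathbb{Z}),\mathbb{R})=0$. Arguing by contradiction, assume $\zeta$ is a rigging for which $M$ is screen conformal, say $A_{N}=\varphi\,\overset{\star}{A}_{\xi}$. Since $\overline{M}^{4}$ has constant curvature, $M$ is compact and $H^{1}(M;\mathbb{R})=0$, Proposition~\ref{prop3} applies and $M$ is totally geodesic; hence $\overset{\star}{A}_{\xi}=0$, and screen conformality then yields $A_{N}=0$.

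Next I would pass to the rescaled rigging exactly as in the proof of Proposition~\ref{prop3}: the Gauss--Codazzi argument there gives $d\tau=0$, and since $H^{1}(M;\mathbb{R})=0$ we may write $\tau=d\phi$ with $\phi\in C^{\infty}(M)$ and set $\widehat{\zeta}=e^{-\phi}\zeta$. A routine computation shows that $\widehat{\zeta}$ has the same screen distribution as $\zeta$, rigged vector field $\widehat{\xi}=e^{\phi}\xi$, null transverse field $\widehat{N}=e^{-\phi}N$, one-form $\widehat{\tau}=\tau-d\phi=0$ and shape operator $A_{\widehat{N}}=e^{-\phi}A_{N}=0$, while $\overset{\star}{A}_{\widehat{\xi}}=0$ because total geodesy is independent of the normalization.

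Then $\widehat{\xi}$ is $\nabla$-parallel: by the Weingarten equation~(\ref{eq:6d}), $\nabla_{X}\widehat{\xi}=-\overset{\star}{A}_{\widehat{\xi}}X-\widehat{\tau}(X)\widehat{\xi}=0$. Writing $Y=PY+\widehat{\omega}(Y)\widehat{\xi}$ and using~(\ref{eq:6c}) together with $\nabla\widehat{\xi}=0$, a short computation gives $(\nabla_{X}\widehat{\omega})(Y)=-C(X,PY)$, which vanishes since $A_{\widehat{N}}=0$; as $\nabla$ is torsion-free this forces $d\widehat{\omega}=0$. Because $H^{1}(M;\mathbb{R})=0$, $\widehat{\omega}=dh$ for some $h\in C^{\infty}(M)$, so the gradient of $h$ for the rigged metric $\widetilde{g}$ of $\widehat{\zeta}$ is $\widehat{\xi}$, whence $\widetilde{g}(\widetilde{\nabla}h,\widetilde{\nabla}h)=\widetilde{g}(\widehat{\xi},\widehat{\xi})=1$ everywhere; this is impossible on the compact manifold $M$, where $h$ has a critical point. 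The contradiction proves the proposition.

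The genuinely delicate step is producing $d\widehat{\omega}=0$, i.e.\ checking the transformation law $A_{\widehat{N}}=e^{-\phi}A_{N}$ and the identity $(\nabla_{X}\widehat{\omega})(Y)=-C(X,PY)$; the remainder is bookkeeping or a direct appeal to Proposition~\ref{prop3}. Note that the argument does not really use $\dim\overline{M}=4$ --- it works verbatim for any compact null hypersurface with finite fundamental group in a Lorentzian space form --- the four-dimensional case being merely the one of interest. Alternatively, once $d\widehat{\omega}=0$ and $\nabla\widehat{\xi}=0$ are in hand one may conclude by Hodge theory: $\widehat{\omega}$ is then a nowhere-zero $\widetilde{g}$-parallel, hence harmonic, $1$-form, so $b_{1}(M)\geq1$, again contradicting the finiteness of $\pi_{1}(M)$.
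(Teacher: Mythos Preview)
Your argument is correct and complete, but it is \emph{not} the paper's proof. After invoking Proposition~\ref{prop3} to get that $M$ is totally geodesic, the paper proceeds quite differently: it uses that screen conformality makes $\mathscr{S}(\zeta)$ integrable, checks (via $\widetilde{\nabla}_{X}Y=\overset{\star}{\nabla}_{X}Y+B(X,Y)\xi$ for $X,Y\in\mathscr{S}(\zeta)$) that the screen leaves are totally geodesic in $(M,\widetilde{g})$ because $B=0$, and then appeals to the Johnson--Whitt theorem that a compact $3$-manifold carrying a totally geodesic codimension-one foliation must have infinite fundamental group. That is precisely where the hypothesis $\dim\overline{M}=4$ enters in the paper.

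Your route, by contrast, stays inside the rigging calculus: once $M$ is totally geodesic and screen conformal, both shape operators vanish, and after the rescaling $\widehat{\zeta}=e^{-\phi}\zeta$ you get $\widehat{\tau}=0$, $\nabla\widehat{\xi}=0$, and hence $\nabla\widehat{\omega}=0$; then $\widehat{\omega}$ is a closed nowhere-zero $1$-form and the Theorem~\ref{thm1.1} endgame applies. This is more elementary (no external foliation theorem) and, as you observe, dimension-free, so it actually strengthens the statement to arbitrary ambient dimension. The paper's approach has the virtue of linking the problem to the geometry of the screen foliation, but at the cost of the dimensional restriction and a nontrivial cited result.
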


\begin{proof}
Let $M$ be as above. Suppose there is a normalization such that $M$ is screen
conformal. Since $M$ has finite fundamental group, the first De Rham
cohomology group $H^{1}(M,\mathbb{R})$ is trivial. It follows from
Proposition~\ref{prop3} that $M$ is totally geodesic. Elsewhere, $M$ being
screen conformal, $\mathscr{S}(\zeta)$ is integrable and induces a foliation
on $M$. We show that the leaves of the screen distribution $\mathscr{S}(\zeta
)$ are totally geodesic in $(M,\widetilde{g})$. For this, recall (from
\cite{bi0}, Proposition~3.7)that for $X$ and $Y$ in $\mathscr{S}(\zeta)$,
\[
\widetilde{\nabla}_{X}Y=\overset{\star}{\nabla}_{X}Y-\widetilde{g}%
(\widetilde{\nabla}_{X}\xi,Y)\xi,
\]
but we also have
\[
\widetilde{g}(\widetilde{\nabla}_{X}\xi,Y)+\widetilde{g}(\widetilde{\nabla
}_{Y}\xi,X)=L_{\xi}\widetilde{g}(X,Y)=-2B(X,Y).
\]

Now, since $\mathscr{S}(\zeta)$ is integrable, we have $\left.  \widetilde
{g}(\widetilde{\nabla}_{X}\xi,Y)=\widetilde{g}(\widetilde{\nabla}_{Y}%
\xi,X)\right.  $. It follows that $\left.  \widetilde{g}(\widetilde{\nabla
}_{X}\xi,Y)=-B(X,Y)\right.  $ which implies that
\[
\widetilde{\nabla}_{X}Y=\overset{\star}{\nabla}_{X}Y+B(X,Y)\xi.
\]

In other words, the second fundamental form of each leaf of $\mathscr{S}(\zeta
)$ in $(M,\widetilde{g})$ is $B$ and then each of them is totally geodesic in
$(M,\widetilde{g})$ as $M$ is totally geodesic in $(\overline{M}^{4}%
,\overline{g})$. It follows that there exits a totally geodesic codimension
one foliation on the compact 3-manifold $M$, hence $M$ must have infinite
fundamental group (see \cite{DL}), which is a contradiction.
\end{proof}

\section{Completeness of $(M, \widetilde{g})$}

\label{completeness} On a normalized null hypersurface in a Lorentzian
manifold there is a bridge between the Riemannian geometry of the couple
$(M,\widetilde{g})$ and the null geometry of $M$. The key is to use Riemannian
techniques, so it worths to investigate on its completeness. We consider first
this problem in some particular Lorentzian manifold (Robertson-Walker spaces,
generalized Robertson-Walker spaces) and finish with the case of arbitrary
Lorentzian manifold.

\label{warpeddecomp} It is known that a totally umbilic null hypersurface with
a closed normalization splits locally as a twisted product, the decomposition
being global if $M$ is simply connected and the rigged vector field complete,
\cite[Theorem 5.3]{bi0}. We show here that if moreover it admits a closed
conformal rigging in an ambient space form, the local twisted product
structure of the rigged metric is in fact a warped product. Elsewhere, we show
that in a Robertson-Walker space case, using a specific rigging, we also get
warped decomposition of totally umbilic null hypersurfaces. This allows us to
state some sufficient conditions for $(M,\widetilde{g})$ to be complete.

\begin{thm}
\label{warp1} Let $(\overline{M}^{n+2},\overline{g})$ be a Lorentzian manifold
with constant curvature (with $n\geq2$) and $M$ a totally umbilic null
hypersurface admitting a closed conformal normalization $\zeta$. Then given
$p\in M$, the Riemannian structure$(M,\widetilde{g})$ is locally isometric to
a warped product $(\mathbb{R}\times S,dr^{2}+f^{2}g_{0})$ where $S$ is the
leaf of $\mathscr{S}(\zeta)$ through $p$, and $g_{0}$ is a conformal metric to
$g_{\lvert_{S}}$. Moreover, if $M$ is simply connected and the rigged vector
field $\xi$ complete, the decomposition is global.
\end{thm}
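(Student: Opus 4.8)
The plan is to start from the twisted product structure already available: since $M$ is totally umbilic with a closed normalization, \cite[Theorem 5.3]{bi0} gives, locally around $p$, an isometry of $(M,\widetilde g)$ with a twisted product $(\mathbb R\times S,\,dr^2+\lambda^2 g_{|S})$, where $\partial_r$ corresponds to the rigged vector field $\xi$, the factor $S$ is the leaf of $\mathscr S(\zeta)$ through $p$, and $\lambda$ is a positive function on $\mathbb R\times S$. The whole point is therefore to upgrade "twisted" to "warped", i.e.\ to show that the twisting function $\lambda$ depends only on the $\mathbb R$-coordinate. I would do this by exploiting the extra hypotheses that have not yet been used, namely that $\overline M$ has constant curvature and that $\zeta$ is conformal (not merely closed).

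First I would set up the normalization data. Closedness of $\zeta$ gives, via Lemma~\ref{tau1}, control of $\tau$ in terms of $A_N\xi$; conformality of $\zeta$, say $L_\zeta\overline g=2\rho\,\overline g$, forces $\zeta$ to be a closed conformal (concircular) vector field on the space form, so $\overline\nabla_X\zeta=\rho X$ for all $X$. Restricting this identity to $M$ and decomposing along $TM\oplus tr(TM)$, I expect to obtain that $B(X,Y)=\rho_0\,\widetilde g(X,Y)$ on the screen with $\rho_0$ a restriction of $\rho$, and more importantly a clean expression for $\nabla_X N$, hence for the $1$-form $\tau$, which should turn out to be (a multiple of) an exact form in the $r$-variable, in particular $\tau(X)=0$ for $X\in\mathscr S(\zeta)$. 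The next step is to translate this into a statement about $\lambda$: in the twisted product $dr^2+\lambda^2 g_{|S}$, the gradient of $\log\lambda$ in the $S$-directions is governed precisely by the obstruction to the metric being warped, and the screen shape operator together with $\tau$ encode exactly $\widetilde\nabla^S(\log\lambda)$. Using the Gauss--Codazzi equations of the excerpt (equation (\ref{traf1}) and the constant-curvature vanishing of $\overline g(\overline R(X,Y)\xi,N)$) together with the umbilicity $\overset{\star}{A}_\xi=\rho_0 P$, I would derive that the $S$-derivatives of $\lambda$ vanish, so $\lambda=f(r)$; setting $g_0=\lambda(0,\cdot)^{-2}\lambda^2|_{r=0}\,g_{|S}$ — or more simply $g_0$ a suitable conformal rescaling of $g_{|S}$ absorbing the value of $\lambda$ along $S$ at $r=0$ — puts the metric in the stated form $dr^2+f^2 g_0$.

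For the global statement I would argue exactly as in \cite[Theorem 5.3]{bi0}: completeness of $\xi$ means the flow of $\xi$ is defined for all $r\in\mathbb R$, so the leaf $S$ through $p$ together with this flow sweeps out an immersed, and by simple connectedness embedded, copy of $\mathbb R\times S$; the local warped product charts patch because the transition data only rescales the $S$-metric by functions of $r$, and simple connectedness of $M$ rules out holonomy obstructions, giving a global isometry onto $(\mathbb R\times S,\,dr^2+f^2 g_0)$. The main obstacle I anticipate is the middle step: cleanly showing that the conformality of $\zeta$ (beyond mere closedness) together with the constant-curvature Gauss--Codazzi identities kills precisely the $S$-dependence of the twisting function. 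This is essentially a computation identifying $\widetilde\nabla^S\log\lambda$ with a combination of $\tau|_{\mathscr S(\zeta)}$ and the non-umbilic part of $\overset{\star}{A}_\xi$, both of which must be shown to vanish here; organizing that identification without re-deriving the entire twisted-product machinery of \cite{bi0} is the delicate part, and I would lean on equations (\ref{eq:6a})--(\ref{eq:6d}), (\ref{eq:6fo}), Lemma~\ref{tau1}, and (\ref{traf1}) to keep it short.
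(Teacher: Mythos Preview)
Your plan is sound and would work, but it takes a different, more hands-on route than the paper. Both start from \cite[Theorem~5.3]{bi0} to get the local twisted product $(\mathbb R\times S,\,dr^2+\lambda^2 g_{|S})$, and both use that a closed conformal rigging forces $\tau\equiv 0$ (not just $\tau|_{\mathscr S(\zeta)}=0$). From there you intend to identify $\widetilde\nabla^S\log\lambda$ with a combination of $\tau|_{\mathscr S(\zeta)}$ and the trace-free part of $\overset{\star}{A}_\xi$, and kill it directly. The paper instead argues at the level of curvature: from \cite[Theorem~4.8]{bi0} one has
\[
R(U,V)\xi-\widetilde R(U,V)\xi=\overline g(\overline R(U,V)\xi,N)\xi-\tau(U)\overset{\star}{A}_\xi V+\tau(V)\overset{\star}{A}_\xi U,
\]
and with $\tau=0$, $R(U,V)\xi=\overline R(U,V)\xi$ (Gauss--Codazzi), and $\overline R(U,V)\xi=0$ (constant curvature), this collapses to $\widetilde R(U,V)\xi=0$, hence $\widetilde{Ric}(X,\xi)=0$ for all $X\in\mathscr S(\zeta)$. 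The upgrade from twisted to warped then comes for free from the mixed-Ricci-flat criterion of Fern\'andez-L\'opez--Garc\'ia-R\'io--Kupeli--\"Unal \cite[Theorem~1]{bi4}.

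What each approach buys: the paper's route is short and avoids precisely the ``delicate part'' you flagged---it never needs to write down $\widetilde\nabla^S\log\lambda$ or re-open the twisted-product machinery, because \cite{bi4} packages that computation once and for all. Your route is more self-contained (it does not need the external criterion \cite{bi4}) but requires you to carry out the identification of the obstruction term explicitly, which is doable but longer. Note also that equation~(\ref{traf1}) you cite is not the tool the paper uses here; the relevant comparison is \cite[Theorem~4.8]{bi0}. For the global statement both arguments simply defer to \cite[Theorem~5.3]{bi0}, so there is no difference there.
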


\begin{proof}
Using \cite[Theorem 5.3]{bi0}, the only point we are going to show is the
warped decomposition of $(M, \widetilde{g})$.

In \cite[Theorem 4.8]{bi0} it is shown that for $U,V\in TM$ the following
holds,
\[
R(U,V)\xi-\widetilde{R}(U,V)\xi=\overline{g}(\overline{R}(U,V)\xi,N)\xi
-\tau(U)\overset{\star}{A}_{\xi}(V)+\tau(V)\overset{\star}{A}_{\xi}(U).
\]

We also know that if $\zeta$ is closed and conformal, the $1-$form $\tau$
vanishes identically. Using the Gauss-Codazzi equation we have $\left.
R(U,V)\xi=\overline{R}(U,V)\xi.\right.  $ Finally $\overline{R}(U,V)\xi=0$
since $(\overline{M},\overline{g})$ has constant curvature and the above
equality becomes $\widetilde{R}(U,V)\xi=0$ for all tangent vector fields $U$
and $V$. Then, $\widetilde{Ric}(X,\xi)=0$ for all $\mathscr{S}(\zeta)-$valued
vector field $X$ (in fact for all tangent vector field $X$). The result
follows from the mixed Ricci flat condition, \cite[Theorem 1]{bi4}.
\end{proof}

\begin{rem}
\label{remgrad} The global decomposition of $(M, \widetilde{g})$ as warped
product still holds in Theorem~\ref{warp1} if $M$ is not simply connected but
the rigging is a gradient vector field (see \cite[Remark 5.4]{bi0}).
\end{rem}

In case $\overline{M}=I\times_{f}L$ is a Robertson-Walker space, we use the
classical rigging $\zeta=f\frac{\partial}{\partial t}$ wich is a gradient
conformal vector field to get the following.

\begin{prop}
\label{warp2} Let $\overline{M}=I\times_{f}L$ be a Robertson-Walker space and
$M$ a totally umbilic null hypersurface equipped with the (natural) rigging
$\zeta=f\frac{\partial}{\partial t}$.

\begin{enumerate}
\item Then $(M,\widetilde{g})$ is locally isometric to a warped product.
Moreover, if $\xi$ is complete, the decomposition is global.

\item If $\xi$ is complete and the screen distribution $\mathscr{S}(\zeta)$
(which is integrable) has compact leaves, then $(M,\widetilde{g})$ is complete.
\end{enumerate}
\end{prop}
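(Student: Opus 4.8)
\textbf{Proof proposal for Proposition~\ref{warp2}.}

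The plan is to specialize the general machinery developed in Section~\ref{warpeddecomp} to the natural rigging $\zeta=f\frac{\partial}{\partial t}$ in a Robertson-Walker space $\overline{M}=I\times_{f}L$, and then to run a completeness argument on the resulting warped product. First I would verify the hypotheses needed to invoke Theorem~\ref{warp1} (or rather its proof): the vector field $\frac{\partial}{\partial t}$ is gradient and conformal in $\overline{M}=I\times_{f}L$, and multiplying by the warping function $f$ (which depends only on $t$) keeps $\zeta=f\frac{\partial}{\partial t}$ gradient — indeed $\zeta=\overline{\nabla}F$ for a primitive $F$ of $f$ along $I$ — and one checks it remains conformal (this is the classical computation $L_{\zeta}\overline{g}=2f'\,\overline{g}$, using $\overline{g}(\frac{\partial}{\partial t},\frac{\partial}{\partial t})=-1$ and the warped product structure). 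Since $\overline{g}(\zeta,\zeta)=-f^{2}<0$, $\zeta$ is timelike and transverse to the null hypersurface $M$, hence a legitimate rigging. Because $\zeta$ is gradient, by Remark~\ref{remgrad} the simple connectedness of $M$ is not needed; the twisted product becomes a warped product, and the decomposition is global once $\xi$ is complete. This proves part (1). Note that here the ambient space need not have constant curvature: what the proof of Theorem~\ref{warp1} really uses is that $\tau\equiv0$ (which follows from $\zeta$ closed and conformal, see \cite[Theorem 5.3]{bi0}) together with the mixed-Ricci-flat criterion \cite{bi4}; the constant-curvature hypothesis in Theorem~\ref{warp1} was only invoked to kill $\overline{R}(U,V)\xi$, but for a totally umbilic $M$ with closed conformal normalization one gets $\widetilde{R}(U,V)\xi=0$ directly — alternatively, in the Robertson-Walker case one computes the relevant ambient curvature terms explicitly. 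Either way $(M,\widetilde{g})$ is, globally when $\xi$ is complete, a warped product $(\mathbb{R}\times S,\,dr^{2}+f^{2}g_{0})$ with $S$ a leaf of $\mathscr{S}(\zeta)$.

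For part (2), I would argue that a warped product $(\mathbb{R}\times S,\,dr^{2}+\phi^{2}g_{0})$ with complete base $\mathbb{R}$ and complete fiber $(S,g_{0})$ is a complete Riemannian manifold. By part (1) the base factor is $(\mathbb{R},dr^{2})$, which is complete; and $(S,g_{0})$ is complete because $g_{0}$ is conformal to $g_{\lvert_{S}}$, $S$ is a leaf of the screen (a closed embedded submanifold of the closed manifold... here one uses the compactness hypothesis on the leaves), and a compact Riemannian manifold is complete — the conformal factor, being continuous on a compact set, is bounded above and below, so completeness is preserved. The completeness of $(\mathbb{R}\times S,\widetilde{g})$ then follows from the standard fact that a warped product over a complete base with complete fiber is complete (one shows geodesics extend for all time by bounding the base-coordinate speed and using that the fiber metric $\phi^{2}g_{0}$, with $\phi=\phi(r)$ continuous and positive, restricted to any compact $r$-interval is uniformly comparable to $g_{0}$; since any finite-length geodesic stays in such a compact $r$-interval, it stays in a complete region and hence extends). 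I would cite this as a known property of warped products (e.g.\ O'Neill) rather than reprove it.

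The main obstacle is the second step: one must be careful that completeness of the warped product really only needs completeness of base and fiber and no growth condition on the warping function, which is true when the base is $1$-dimensional (an interval) but fails for higher-dimensional bases, so the argument genuinely exploits the $r$-coordinate being a complete line. A secondary technical point is ensuring $(S,g_{0})$ is complete: $S$ compact gives this immediately, but one should record why the leaf $S$ through the chosen point is actually compact — this is exactly the hypothesis that $\mathscr{S}(\zeta)$ has compact leaves — and why the conformal change $g_{\lvert_{S}}\rightsquigarrow g_{0}$ does not destroy completeness, which again is immediate from compactness (bounded conformal factor) but would be delicate without it.
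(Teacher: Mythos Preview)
Your approach is essentially the paper's. For part (1) the paper proceeds via your ``alternative'': it decomposes $\xi=a\frac{\partial}{\partial t}+X_{0}$ with $X_{0}\in TL$, observes that $\mathscr{S}(\zeta)\subset TL$ and $\overline{g}(X_{0},X)=0$ for every $X\in\mathscr{S}(\zeta)$, and then uses the explicit Robertson--Walker curvature formulas (which rely on $L$ having constant curvature $c$) to get $\overline{R}(Y,X)\xi=0$ for all $X,Y\in\mathscr{S}(\zeta)$; the proof of Theorem~\ref{warp1} (with $\tau\equiv 0$) then yields $\widetilde{R}(Y,X)\xi=0$ on the screen, hence $\widetilde{Ric}(X,\xi)=0$, and the mixed-Ricci-flat criterion \cite{bi4} upgrades the twisted product of \cite[Theorem 5.3]{bi0} to a warped one, globalized via Remark~\ref{remgrad}. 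For part (2) the paper simply says: the global warped product $(\mathbb{R}\times S,\,dr^{2}+f^{2}g_{0})$ with $S$ compact is complete. Your longer justification is correct but more than is written there.

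One point to tighten: your claim that ``for a totally umbilic $M$ with closed conformal normalization one gets $\widetilde{R}(U,V)\xi=0$ directly'' is not substantiated and is not true without some curvature input. With $\tau\equiv 0$ the identity from the proof of Theorem~\ref{warp1} reads $\widetilde{R}(U,V)\xi=\overline{R}(U,V)\xi-\overline{g}(\overline{R}(U,V)\xi,N)\xi$, and total umbilicity says nothing about the ambient term $\overline{R}(U,V)\xi$. You genuinely need either constant ambient curvature (Theorem~\ref{warp1}) or the explicit Robertson--Walker computation above; drop the first alternative and commit to the second, spelling out the decomposition of $\xi$ and the inclusion $\mathscr{S}(\zeta)\subset TL$.
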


\begin{proof}
1. Since $\overline{M}=I\times_{f}L$ is a Robertson-Walker space and $L$ being
of constant curvature $c$, we know that
\[
\overline{R}(U,V)W=\frac{(f^{\prime})^{2}+c}{f^{2}}(\overline{g}%
(V,W)U-\overline{g}(U,W)V)
\]
and $\overline{R}(U,V)\frac{\partial}{\partial t}=0$ for all $U,V,W$ tangent
to the factor $L$. Using the classical rigging $\zeta=f\frac{\partial
}{\partial t}$ wich is closed (in fact a gradient) and conformal, decompose
the associated rigged vector field as $\xi=a\frac{\partial}{\partial t}+X_{0}$
with $X_{0}\in TL$; we have $\overline{g}(X_{0},X)=0\;\forall\;X\in
\mathscr{S}(\zeta)$. Remark also that $\mathscr{S}(\zeta)\subset TL$. Taking
into account the above considerations, we get $\overline{R}(Y,X)\xi=0$,
$\forall X,Y\in\mathscr{S}(\zeta)$. Following the proof of previous
Theorem~\ref{warp1} we get that $\widetilde{R}(Y,X)\xi=0$, $\forall
X,Y\in\mathscr{S}(\zeta)$ and then $\widetilde{Ric}(X,\xi)=0$ for all
$\mathscr{S}(\zeta)$-valued vector field $X$. The conclusion follows as in the
last theorem. Taking into account Remark~\ref{remgrad} the global
decomposition holds if $\xi$ is complete.

2. From point 1, we have that $(M,\widetilde{g})$ is globally isometric to a
warped product $\left.  (\mathbb{R}\times S,dr^{2}+f^{2}g_{0})\right.  $, and
being $S$ compact, it is complete.
\end{proof}

\label{criterions}

We study now the $\widetilde{g}$-completeness of null hypersurfaces in
Generalized Robertson-Walker spaces. Let $\overline{M}=I\times_{f}L$ be a
Generalized-Robertson-Walker space and $M$ a null hypersurface of
$\overline{M}$. Take $h$ to be any primitive of $-f$. Then $\overline{\nabla
}h=f\frac{\partial}{\partial t}$. Using $\zeta=f\frac{\partial}{\partial t}$
as a rigging of $M$, we get $\widetilde{\nabla}(h\circ i)=\xi$ and
$\widetilde{g}(\widetilde{\nabla}(h\circ i),\widetilde{\nabla}(h\circ i))=1$
where $i$ is the canonical inclusion of $M$ in $\overline{M}$. Recall from
\cite{bi5,bi5 b} the following important fact: A Riemannian manifold $(M,g)$
is complete if and only if it supports a proper $C^{3}$ function say $f$ such
that $g(\nabla f,\nabla f)$ is bounded. Hence, if $h$ is proper on $M$ then
$(M,\widetilde{g})$ is complete. We have shown the following:

\begin{prop}
\label{grw1} Let $\overline{M}=I\times_{f}L$ be a Generalized-Robertson-Walker
space and $M$ a null hypersurface equipped with the rigging $\zeta
=f\frac{\partial}{\partial t}$ such that $h\circ i$ is a proper function on
$M$ (where $h$ is any primitive of $-f$). Then its rigged Riemannian structure
$(M,\widetilde{g})$ is complete.
\end{prop}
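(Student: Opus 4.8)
The plan is to verify directly the hypotheses of the completeness criterion quoted just before the statement, namely that a Riemannian manifold is complete as soon as it carries a proper $C^3$ function whose gradient has bounded norm. The natural candidate here is $h \circ i$, where $h$ is a primitive of $-f$ and $i:M\hookrightarrow \overline{M}$ is the inclusion, and the properness of this function is precisely what is assumed in the statement; so the only thing left to check is the boundedness of $\widetilde{g}(\widetilde\nabla(h\circ i),\widetilde\nabla(h\circ i))$.

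First I would record that for the Generalized-Robertson-Walker rigging $\zeta = f\,\partial_t$ one has $\overline\nabla h = f\,\partial_t = \zeta$ in $\overline{M}$, since $h$ is a primitive of $-f$ and $\overline g(\partial_t,\partial_t)=-1$. Next I would invoke the identification of the rigged vector field: by construction (see the discussion following~\eqref{garc}), the rigged vector field $\xi$ is the $\widetilde g$-metrically equivalent vector field to $\omega = i^\star\alpha$, where $\alpha = \overline g(\zeta,\cdot)$. Because $\zeta = \overline\nabla h$, we get $\alpha = dh$, hence $\omega = i^\star dh = d(h\circ i)$, which says exactly that $\widetilde\nabla(h\circ i) = \xi$. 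Then I would use the fact, also established in the excerpt, that the rigged vector field is always $\widetilde g$-unitary: $\widetilde g(\xi,\xi) = 1$. Therefore $\widetilde g(\widetilde\nabla(h\circ i),\widetilde\nabla(h\circ i)) = 1$, which is not merely bounded but constant.

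Finally, combining this with the properness hypothesis on $h\circ i$ and the completeness criterion of~\cite{bi5,bi5 b}, one concludes that $(M,\widetilde g)$ is complete. I do not expect any genuine obstacle: every ingredient has already been assembled in the paragraph preceding the statement, and the proof is essentially a one-line appeal to the cited criterion once the two observations $\widetilde\nabla(h\circ i)=\xi$ and $\widetilde g(\xi,\xi)=1$ are in place. If one wanted to be careful about the regularity assumption, the only mild point to address is that $h\circ i$ is $C^3$ (indeed smooth) provided $f$ is smooth, which is part of the standing hypotheses on a Generalized-Robertson-Walker warping function, so this causes no difficulty.
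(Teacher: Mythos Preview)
Your proposal is correct and follows essentially the same route as the paper: the argument preceding the proposition already establishes $\overline{\nabla}h=f\,\partial_t=\zeta$, hence $\widetilde{\nabla}(h\circ i)=\xi$ and $\widetilde{g}(\widetilde{\nabla}(h\circ i),\widetilde{\nabla}(h\circ i))=1$, and then invokes the Gordon completeness criterion \cite{bi5,bi5 b} exactly as you do. Your write-up is slightly more explicit in justifying $\widetilde{\nabla}(h\circ i)=\xi$ via $\alpha=dh$ and the definition of $\xi$ as the $\widetilde{g}$-dual of $\omega$, but the substance is identical.
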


\begin{rem}
In case $L$ is compact, $h:\overline{M}\longrightarrow\mathbb{R}$ is proper if
and only if $\overline{M}$ is null complete. Recall also that if
$h:\overline{M}\longrightarrow\mathbb{R}$ is proper and $M$ is a closed subset
then $h\circ i:M\longrightarrow\mathbb{R}$ is also proper. Hence
\end{rem}

\begin{prop}
\label{grw2} Let $\overline{M}=I\times_{f}L$ be a Generalized-Robertson-Walker
space with compact Riemannian factor $L$. If $\overline{M}$ is null complete,
any topologically closed null hypersurface $M$ in $\overline{M}$ is
$\widetilde{g}$-complete for the usual rigging $\zeta=f\frac{\partial
}{\partial t}$.
\end{prop}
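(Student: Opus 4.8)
The plan is to deduce the statement directly from Proposition~\ref{grw1} together with the two facts recorded in the remark just above, so the actual work is to justify those facts. Fix the natural rigging $\zeta=f\frac{\partial}{\partial t}$ and let $h$ be a primitive of $-f$, so that $\overline{\nabla}h=f\frac{\partial}{\partial t}$, as already observed before Proposition~\ref{grw1}. Since $\widetilde{\nabla}(h\circ i)=\xi$ and $\widetilde{g}(\widetilde{\nabla}(h\circ i),\widetilde{\nabla}(h\circ i))=1$ in this setting, Proposition~\ref{grw1} reduces everything to showing that $h\circ i$ is proper on $M$, and this will follow once I know that $h$ itself is proper on $\overline{M}$.

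First I would prove that $h\colon\overline{M}\longrightarrow\mathbb{R}$ is proper. The key computation is the Hessian identity $\mathrm{Hess}\,h=f'\,\overline{g}$ on $\overline{M}=I\times_f L$, which comes from $\overline{\nabla}h=f\frac{\partial}{\partial t}$ and the standard warped-product connection formulas $\overline{\nabla}_{\partial_t}\partial_t=0$ and $\overline{\nabla}_V\partial_t=\frac{f'}{f}V$ for $V$ tangent to $L$. Hence along any null geodesic $\gamma$ of $\overline{M}$ one has $\frac{d^{2}}{ds^{2}}(h\circ\gamma)=\mathrm{Hess}\,h(\gamma',\gamma')=f'\,\overline{g}(\gamma',\gamma')=0$, so $h\circ\gamma$ is affine in the affine parameter $s$; its slope is $\frac{d}{ds}(h\circ\gamma)=\overline{g}(\overline{\nabla}h,\gamma')=-f(t)\,t'$, which is nonzero because a nonzero null vector of $I\times_f L$ is never tangent to a slice $\{t\}\times L$ (the metric is positive definite there). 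If $\overline{M}$ is null complete, every such $\gamma$ is defined on all of $\mathbb{R}$, so the non-constant affine function $h\circ\gamma$ is surjective onto $\mathbb{R}$; since $h$ depends only on $t\in I$ and $f>0$ makes $h$ a strictly decreasing diffeomorphism onto its image, this forces $h(I)=\mathbb{R}$. With $L$ compact, $h^{-1}([a,b])=h|_I^{-1}([a,b])\times L$ is then a compact interval in $I$ times the compact factor $L$, hence compact, so $h$ is proper. (Conversely, if $h(I)\neq\mathbb{R}$ then an affine $h\circ\gamma$ would exhaust $h(I)$ in finite parameter time while $\gamma$ leaves every compact subset of $\overline{M}$, which gives null incompleteness; this is the equivalence asserted in the remark.)

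Next I would use that $M$ topologically closed means $i\colon M\hookrightarrow\overline{M}$ is a closed embedding, and the composition of a proper map with a closed embedding is proper; therefore $h\circ i\colon M\longrightarrow\mathbb{R}$ is proper. Proposition~\ref{grw1}, applied with this rigging and this $h$, then yields that the rigged Riemannian structure $(M,\widetilde{g})$ is complete, which is the claim.

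The step I expect to be the main obstacle is the properness of $h$ on $\overline{M}$: it requires the Hessian identity $\mathrm{Hess}\,h=f'\,\overline{g}$, the transversality of null geodesics to the slices (so that $h\circ\gamma$ is affine and non-constant), and the slightly delicate observation that in $I\times_f L$ with $L$ compact a curve leaves every compact set precisely when its $t$-coordinate exits $I$ — this is exactly what ties null completeness of $\overline{M}$ to the surjectivity $h(I)=\mathbb{R}$, and hence to properness. Everything after that (pulling properness back along the closed inclusion and invoking Proposition~\ref{grw1}) is routine.
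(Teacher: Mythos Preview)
Your proposal is correct and follows exactly the route the paper takes: the paper's ``proof'' of Proposition~\ref{grw2} is just the word ``Hence'' after the remark, i.e.\ the two facts in the remark combined with Proposition~\ref{grw1}. What you add is a genuine justification of the first fact (that $h$ is proper on $\overline{M}$ when $L$ is compact and $\overline{M}$ is null complete), via the Hessian identity $\mathrm{Hess}\,h=f'\,\overline{g}$ and the resulting affineness of $h\circ\gamma$ along null geodesics---details the paper simply asserts without argument.
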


\begin{example}
Consider $\overline{M}=\mathbb{R}\times_{t^{2}+1}L$ with $L$ compact. It is
null complete and then any (topologically) closed null hypersurface $M$ in
$\overline{M}$ is $\widetilde{g}$-complete for the usual rigging $\zeta
=f\frac{\partial}{\partial t}$.
\end{example}

For GRW spaces with complete Riemannian factors, we show the following.

\begin{thm}
\label{grw3} Let $\overline{M}=\mathbb{R}\times_{f}L$ be a
Generalized-Robertson-Walker space with complete Riemannian factor $(L,g_{0})$
and $M$ be a topologically closed null hypersurface of $\overline{M}$. Then,
the Riemannian structure $(M,\widetilde{g})$ induced by the rigging
$\zeta=\sqrt{2}\dfrac{\partial}{\partial t}$ is complete.
\end{thm}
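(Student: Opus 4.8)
The plan is to exhibit a proper function on $M$ whose $\widetilde{g}$-gradient has bounded norm, then invoke the characterization of completeness from \cite{bi5,bi5 b} already recalled in the text. With the rigging $\zeta=\sqrt{2}\,\partial_t$ one has $\overline{g}(\zeta,\zeta)=-2$, which is precisely the normalization that makes $N=\zeta-\tfrac12\overline{g}(\zeta,\zeta)\xi=\sqrt 2\,\partial_t+\xi$ behave well; more importantly $\alpha=\overline g(\zeta,\cdot)=-\sqrt2\,dt$ is closed, so $\omega=i^\star\alpha=-\sqrt2\,d(t\circ i)$ and hence $\widetilde{\nabla}(t\circ i)=-\tfrac{1}{\sqrt2}\,\xi$ on $M$. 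Consequently $\widetilde g(\widetilde\nabla(t\circ i),\widetilde\nabla(t\circ i))=\tfrac12\,\widetilde g(\xi,\xi)=\tfrac12$, a bounded quantity. So the ``bounded gradient'' half of the criterion is immediate for the function $u:=t\circ i:M\to\mathbb{R}$; the whole difficulty is to show $u$ is proper, and here is where the completeness of the fibre $(L,g_0)$ and the topological closedness of $M$ must be combined.

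The main step is therefore the following claim: $u=t\circ i$ is proper on $M$. I would argue by estimating the ambient distance. Write points of $\overline{M}=\mathbb{R}\times_f L$ as $(t,x)$ and recall that the rigged metric satisfies $\widetilde g=i^\star(\overline g+\alpha\otimes\alpha)=i^\star(\overline g+2\,dt^2)$. On $T\overline M$ the tensor $\overline g+2\,dt^2=dt^2+f(t)^2 g_0$ is a genuine (Riemannian!) product-type metric $\breve g$ on $\mathbb{R}\times L$, and $\widetilde g=i^\star\breve g$. Thus $(M,\widetilde g)$ is isometrically an immersed (indeed, since $M$ is a closed subset, embedded and closed) submanifold of the Riemannian manifold $(\mathbb{R}\times L,\breve g)$. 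Now $(\mathbb{R}\times L,\breve g)$ need not itself be complete because $f$ may decay, but the projection $\pi_L:(\mathbb{R}\times L,\breve g)\to(L,g_0)$ is distance–nonincreasing only after dividing by $f$, which is awkward; instead I would use the projection $\pi_{\mathbb R}:(\mathbb{R}\times L,\breve g)\to(\mathbb{R},dt^2)$, which \emph{is} $1$-Lipschitz, to control the $t$-coordinate, together with the observation that a $\breve g$-Cauchy sequence in $M$ has bounded $t$-coordinate (by the Lipschitz projection) and hence lies in a slab $[-T,T]\times L$ on which $\breve g$ is uniformly equivalent to the complete product metric $dt^2+g_0$; completeness of $(L,g_0)$ then traps the sequence in a compact set, and closedness of $M$ as a subset of $\overline M$ forces the limit to lie in $M$. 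This shows $(M,\widetilde g)$ is complete directly, and in particular the continuous proper exhaustion $u$ exists — though in fact the slab argument already gives completeness without separately invoking the $\nabla f$ criterion.

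I expect the genuine obstacle to be the uniform equivalence of metrics on the slab: one needs $f$ to be bounded away from $0$ and $\infty$ on $[-T,T]$, which holds because $f$ is a positive continuous (indeed smooth) function on the compact interval $[-T,T]\subset\mathbb R$ — so this is fine — but one must be careful that a $\widetilde g$-Cauchy sequence in $M$ really does have $\breve g$-bounded image, i.e. that intrinsic $\widetilde g$-distance in $M$ dominates (up to the slab) the restricted ambient $\breve g$-distance; since $i$ is an isometric embedding with respect to $\breve g$ this is automatic, $d_{\breve g}(i(p),i(q))\le d_{\widetilde g}(p,q)$. Assembling: a $\widetilde g$-Cauchy sequence $(p_k)$ in $M$ has $\breve g$-bounded, hence $dt^2$-bounded, $t$-coordinates, lies eventually in a slab, is $dt^2+g_0$-Cauchy there, converges in $\mathbb{R}\times L$ by completeness of $L$, and the limit lies in $M$ since $M$ is closed in $\overline M$; finally $\widetilde g$ and $\breve g|_M$ agree, so $(p_k)$ converges in $(M,\widetilde g)$. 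Hence $(M,\widetilde g)$ is complete. $\square$
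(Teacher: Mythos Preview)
Your argument is correct, and the core observation --- that with $\zeta=\sqrt{2}\,\partial_t$ one has $\breve g=\overline{g}+\alpha\otimes\alpha=dt^2+f^2g_0$, so $(M,\widetilde{g})$ sits as a closed Riemannian submanifold of $(\mathbb{R}\times L,\breve g)$ --- is exactly the paper's proof.

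Where you diverge is in the sentence ``$(\mathbb{R}\times L,\breve g)$ need not itself be complete because $f$ may decay''. This is false: a Riemannian warped product $B\times_f F$ is complete if and only if both factors are complete (Bishop--O'Neill \cite{bi6}). Since $\mathbb{R}$ and $(L,g_0)$ are complete, so is $(\mathbb{R}\times L,\breve g)$, and the conclusion follows at once from the fact that a closed embedded submanifold of a complete Riemannian manifold is itself complete in the induced metric. Your slab argument is sound, but it is essentially a hand-rolled proof of this special instance of warped-product completeness; once you invoke the Bishop--O'Neill lemma the proof collapses to two lines, and the initial detour through the bounded-gradient criterion of \cite{bi5,bi5 b} (which you yourself flag as redundant) can be dropped entirely.
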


\begin{proof}
The Lorentzian metric on $\overline{M}$ is given by $\overline{g} = -dt^{2}+
f^{2}g_{0}$. Then using the rigging $\zeta= \sqrt{2}\dfrac{\partial}{\partial
t}$ and the first equality in (\ref{garc}) we get
\[
\overset{\smile}{g} = dt^{2}+ f^{2}g_{0}%
\]
which shows that $(\overline{M},\overset{\smile}{g})$ is a complete Riemannian
manifold as $(L,g_{0})$ is complete. Then, since $M$ is topologically closed,
using the second equality in (\ref{garc}) we see that $(M,\widetilde{g})$ is a
complete Riemannian manifold.
\end{proof}

The following theorem gives some sufficient conditions to get a complete
induced Riemannian structure on a given null hypersuface in any Lorentzian
manifold. It is an improvement of Proposition \ref{warp2} point 2.

\begin{thm}
\label{main} Let $(\overline{M}^{n+2},\overline{g})$ be a Lorentzian manifold
and $(M,\zeta)$ a closed normalization of a connected non compact null
hypersurface. If $\xi$ is complete and $\mathscr{S}(\zeta)$ has compact leaves
then $(M,\widetilde{g})$ is complete.
\end{thm}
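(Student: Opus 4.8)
The plan is to show that the closedness of the rigging gives a splitting of $(M,\widetilde g)$ as a (possibly global) product along the integral curves of $\xi$, and then to use completeness of $\xi$ together with compactness of the screen leaves to conclude. Since $\zeta$ is closed, $\omega=i^{\star}\alpha$ is a closed $1$-form on $M$ with $\widetilde g(\xi,\xi)=1$, so $\xi$ is a $\widetilde g$-unit vector field which is (at least locally) a gradient; moreover $\xi$ has no zeros, so by a Morse-theoretic / flow-box argument the flow $\Phi_t$ of $\xi$ is defined for all $t$ (completeness of $\xi$ is assumed) and the level sets of a local primitive $f$ of $\omega$ are exactly the screen leaves $\mathscr S(\zeta)$. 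The first step is therefore to establish that $M$ is diffeomorphic to $\mathbb R\times S$ (or a quotient thereof), where $S$ is a leaf of $\mathscr S(\zeta)$, with $\xi$ corresponding to $\partial_r$; here I would invoke the splitting already quoted from \cite[Theorem 5.3]{bi0} and Remark~\ref{remgrad}, adapted to the present hypotheses, or reprove it directly: the map $(t,x)\mapsto\Phi_t(x)$ from $\mathbb R\times S$ is a covering map onto $M$, and since $M$ is connected non compact it is either a diffeomorphism or a $\mathbb Z$-quotient, and in either case the universal structure is $\mathbb R\times S$.

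The second step is to control the metric in these coordinates. Write $\widetilde g = dt^2 + g_t$ where $g_t$ is the induced metric on the slice $\Phi_t(S)\cong S$; the cross terms vanish because $\xi$ is $\widetilde g$-unit and $\widetilde g$-orthogonal to $\mathscr S(\zeta)=TM\cap\zeta^{\perp}$. The key point is that, although $(M,\widetilde g)$ need not be a metric product, the function $t$ (a global primitive pulled back on the $\mathbb Z$-cover, or already global on $M$ when $b_1$ permits) is a proper $C^{\infty}$ function with $\widetilde g(\widetilde\nabla t,\widetilde\nabla t)=\widetilde g(\xi,\xi)=1$ bounded. Indeed $t$ is proper because its level sets are the compact leaves $S$ and the flow of $\nabla t=\xi$ identifies $t^{-1}([a,b])$ with $[a,b]\times S$, which is compact. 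By the completeness criterion recalled before Proposition~\ref{grw1} (from \cite{bi5,bi5 b}): a Riemannian manifold is complete iff it admits a proper $C^{3}$ function with bounded gradient norm. This yields completeness of $(M,\widetilde g)$ immediately.

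The main obstacle is the global existence of the primitive $t$: $\omega$ is closed but need not be exact on $M$, so a priori only a local primitive exists, and "properness" of a many-valued function needs care. I would handle this by passing to the abelian cover $\widehat M$ associated to the kernel of $[\omega]\colon\pi_1(M)\to\mathbb R$ on which $\omega$ lifts to $d\widehat t$; completeness of $(\widehat M,\widehat{\widetilde g})$ follows as above, and completeness descends to the quotient $(M,\widetilde g)$ since the covering is Riemannian (a Cauchy sequence in $M$ lifts to one in $\widehat M$ once restricted to a fundamental domain, using that nearby sheets are uniformly separated away from... ) — alternatively, and more cleanly, one observes that the flow $\Phi_t$ of the unit field $\xi$ is complete by hypothesis and each orbit meets a fixed compact leaf $S$, so $M=\bigcup_{t}\Phi_t(S)$ and any divergent sequence in $M$ must have $|t|\to\infty$ along it; since $\Phi$ is an isometry onto its image between slices and $S$ is compact, one then builds an explicit comparison showing Cauchy sequences converge. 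I expect the write-up to favor the covering-space argument, as it reduces everything to the already-proven global warped/product case and the cited completeness criterion.
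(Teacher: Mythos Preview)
Your strategy coincides with the paper's: obtain a global product structure $\mathbb{R}\times S$ via the flow of $\xi$, exhibit a function with unit gradient, check it is proper using compactness of the leaves, and conclude by the Gordon criterion from \cite{bi5,bi5 b}. The ingredients and the logical skeleton match.

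Where you diverge is in handling what you call ``the main obstacle'', the global existence of the primitive $t$. The paper does not pass to any cover here. Instead it invokes \cite[Proof of Lemma 3.1]{GutOlea2003a} and \cite[Theorem 4.1]{GutOlea2003b} to assert directly that, because $\xi$ is a complete closed unit field with compact orthogonal leaves on a non-compact $M$, the flow map $\Phi:\mathbb{R}\times L\to M$ is already a \emph{diffeomorphism}. The global function is then simply $f=\mathrm{pr}_1\circ\Phi^{-1}$, which automatically satisfies $\widetilde{\nabla}f=\xi$; the paper then quotes \cite[Theorem 6.2]{bi12} to confirm that $f^{-1}(0)$ is a single compact leaf, so $f=\mathrm{pr}_1\circ F^{-1}$ is proper. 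Your dichotomy ``either a diffeomorphism or a $\mathbb{Z}$-quotient'' already contains the resolution: a $\mathbb{Z}$-quotient would make $M$ a mapping torus of the compact leaf $S$, hence compact, contradicting the hypothesis. So the covering-space detour you propose, while workable, is unnecessary, and your expectation that the write-up favors it is incorrect. One point you leave implicit and the paper outsources to the cited references is why $\Phi$ is a covering at all; this is where closedness of $\xi$ (so that the flow preserves the foliation by screen leaves) is used, and it deserves a sentence if you prove it directly rather than by citation.
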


\begin{proof}
Let $\Phi$ be the flow of $\xi$. Since $\xi$ is complete, closed with compact
orthogonal leaves,
\begin{align}
\Phi &  :\mathbb{R}\times L\longrightarrow M\nonumber\\
&  (t,p)\longmapsto\Phi_{t}(p)\nonumber
\end{align}
is a diffeomorphism ,where $L$ is a leaf of $\mathscr{S}(\zeta)$ \cite[Proof
of Lemma 3.1]{GutOlea2003a}, \cite[Theorem 4.1]{GutOlea2003b}. Suppose the
inverse of $\Phi$ decomposes as
\begin{align}
\Phi^{-1} &  :M\longrightarrow\mathbb{R}\times L\nonumber\\
&  x\longmapsto(f(x),\psi(x))\nonumber
\end{align}
so we have $\widetilde{\nabla}f=\xi$ which is nowhere zero, $f$ is a
submersion. Since $\left\vert \widetilde{\nabla}f\right\vert =1$ and $\xi$ is
complete, there exists a diffeomorphism $\left.  F:\mathbb{R}\times
f^{-1}(0)\rightarrow M\right.  $, \cite[Theorem 6.2]{bi12}. Moreover, $\left.
pr_{1}:\mathbb{R}\times f^{-1}(0)\rightarrow\mathbb{R}\right.  $ being the
projection on the first factor we have $f=pr_{1}\circ F^{-1}$. By hypothesis
$M$ is connected and $F$ a diffeomorphism so $f^{-1}(0)$ is connected too. It
follows that $f^{-1}(0)$ is a leaf of $\mathscr{S}(\zeta)$ which is compact,
so $pr_{1}$ is a proper map. Thus, $f$ is a proper map. Since its gradient is
bounded, we conclude that $(M,\widetilde{g})$ is complete.
\end{proof}

\begin{thm}
\label{thmmghc} Let $(\overline{M},\overline{g})$ be a Lorentzian manifold
furnished with a proper function $f$ whose gradient is timelike everywhere.
Then for any topologically closed null hypersurface in $\overline{M}$, the
rigging $\zeta=\overline{\nabla}f$ makes $(M,\widetilde{g})$ complete.
\end{thm}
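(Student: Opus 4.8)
The plan is to reduce to the completeness criterion recalled just before Proposition~\ref{grw1}: a Riemannian manifold is complete as soon as it carries a proper $C^{3}$ function whose gradient is bounded. The candidate function will of course be $f\circ i$, where $i:M\hookrightarrow\overline{M}$ is the inclusion.

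First I would check that $\zeta=\overline{\nabla}f$ is an admissible rigging for $M$ in the sense of Definition~\ref{rigging1}. Since $f$ has timelike gradient everywhere, $\zeta$ is a timelike vector field; at any $p\in M$ the tangent hyperplane $T_{p}M$ is degenerate, hence its induced metric is positive semidefinite and it contains no timelike vector, so $\zeta_{p}\notin T_{p}M$. Next I would identify the rigged vector field: the $\overline{g}$-dual $1$-form of $\zeta$ is $\alpha=\overline{g}(\overline{\nabla}f,\cdot)=df$, which is exact, so $\omega=i^{\star}\alpha=i^{\star}df=d(f\circ i)$ and therefore the rigged vector field is $\xi=\widetilde{\nabla}(f\circ i)$. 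Because $\xi$ is $\widetilde{g}$-unitary we obtain
\[
\widetilde{g}\bigl(\widetilde{\nabla}(f\circ i),\widetilde{\nabla}(f\circ i)\bigr)=\widetilde{g}(\xi,\xi)=1,
\]
so $f\circ i$ has $\widetilde{g}$-gradient of constant norm, in particular bounded. (This is exactly the computation already carried out for GRW spaces with the primitive $h$ of $-f$.)

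It then remains to see that $f\circ i$ is proper on $M$. Since $M$ is topologically closed in $\overline{M}$, for any compact $K\subset\mathbb{R}$ the set $(f\circ i)^{-1}(K)=M\cap f^{-1}(K)$ is a closed subset of the set $f^{-1}(K)$, which is compact because $f$ is proper; hence $(f\circ i)^{-1}(K)$ is compact and $f\circ i$ is proper. As $f\circ i$ inherits the regularity of $f$, applying the quoted criterion to the proper function $f\circ i$ with bounded gradient yields that $(M,\widetilde{g})$ is complete.

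I do not expect a genuine obstacle here; the only points needing a little care are the signature argument showing that a timelike vector field is never tangent to a null hypersurface, and the (standard) fact that properness is inherited under restriction to a topologically closed subset. In effect this statement is the arbitrary-ambient analogue of Proposition~\ref{grw1}, with the warping primitive $h$ replaced by the given proper timelike-gradient function $f$.
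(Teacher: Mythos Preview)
Your argument is correct and follows essentially the same route as the paper: restrict $f$ to $M$, observe that $\widetilde{\nabla}(f\circ i)=\xi$ so its $\widetilde{g}$-gradient has norm $1$, note that properness passes to the closed subset $M$, and invoke the Gordon completeness criterion. If anything, you spell out more details (the admissibility of the timelike rigging and the properness step) than the paper, which simply asserts these points.
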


\begin{proof}
Let us denote by $h$ the restriction of $f$ on $M$. Since $M$ is closed in
$\overline{M}$, $h$ is also a proper function on $M$. Considering the rigging
$\zeta=\overline{\nabla}f$, a straightforward argument shows that
$\widetilde{\nabla}h=\xi$, so $\widetilde{g}(\widetilde{\nabla}h,\widetilde
{\nabla}h)=1$. It follows that $h$ is a proper function on $M$ whose gradient
is bounded, then $(M,\widetilde{g})$ is complete.
\end{proof}

\section{Applications}

\label{applic} Let $(M,\zeta)$ be a normalized null hypersurface of a
Lorentzian manifold $\overline{M}$, we show several results under the
hypothesis that $(M,\widetilde{g})$ is a complete Riemannian manifold. In the
first part we show that the non-normalized null mean curvature of $M$ is
strongly controled by the ricci curvature of $\overline{M}$ evaluated on the
associated rigged vector field $\xi$. We investigate also about mean curvature
of null hypersurfaces all of whose screen principal curvatures are constant.
The second part deals with null hypersurface with semi-definite shape. Non
existence of closed geodesic in $(M,\widetilde{g})$ is proved for some special
cases which allows us to give a classification theorem e.g.
Theorem~\ref{thm4b} and corollary~\ref{classi}.Finally, we investigate about
the existence of topologically closed totally geodesic null hypersufaces in
Robertson-Walker spaces.

\subsection{Ricci estimates and mean curvature boundedness}

\label{ricciestimates} Given a normalized null hypersurface $(M,\zeta)$ of a
Lorentzian manifold $\overline{M}$, we prove some results about $M$ under
hypothesis on the ricci curvature of $\overline{M}$ evaluated on the
associated rigged vector field $\xi$.

\begin{thm}
\label{thm1} Let $(\overline{M},\overline{g})$ be a Lorentzian manifold and
$(M,\zeta)$ a closed normalization of a null hypersurface such that $\tau
(\xi)=0$. Assume $M$ to be $\widetilde{g}-$complete and there exists a
nonnegative constant $k$ such that $\left.  \overline{Ric}(\xi)\geq
-k\text{.}\right.  $ Then we have $\lvert H\lvert\leq k$ where $H$ stands for
the (non normalized) mean curvature of $M$.
\end{thm}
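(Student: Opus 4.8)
The plan is to run a Bochner--type / gradient-comparison argument on the complete Riemannian manifold $(M,\widetilde{g})$, using the integral curves of the $\widetilde{g}$-unit vector field $\xi$ as unit-speed geodesics along which the mean curvature $H$ satisfies a Riccati-type differential inequality. Recall from the excerpt that when the normalization is closed and $\tau(\xi)=0$ we have the pointwise identity (see the computation preceding Proposition~\ref{prop3}, specialized to $\widehat\tau(\widehat\xi)=0$)
\[
\overline{Ric}(\xi)=\xi(H)-\lvert\overset{\star}{A}_{\xi}\rvert^{2}.
\]
Combining this with the Cauchy--Schwarz estimate $\lvert\overset{\star}{A}_{\xi}\rvert^{2}\geq \tfrac1n H^{2}$ and with the hypothesis $\overline{Ric}(\xi)\geq -k$ yields the key differential inequality
\[
\xi(H)\geq \lvert\overset{\star}{A}_{\xi}\rvert^{2}+\overline{Ric}(\xi)\geq \tfrac1n H^{2}-k
\]
holding everywhere on $M$.

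Next I would exploit completeness. Since $\xi$ is $\widetilde{g}$-unit and $(M,\widetilde{g})$ is complete, every integral curve $\gamma(t)$ of $\xi$ extends to all $t\in\mathbb{R}$; along such a curve, writing $\eta(t)=H(\gamma(t))$, the inequality becomes the scalar ODE inequality $\eta'(t)\geq \tfrac1n\eta(t)^{2}-k$. The standard comparison with the Riccati equation $y'=\tfrac1n y^{2}-k$ shows that if at some point $\eta(t_{0})>\sqrt{nk}$ (the larger equilibrium, $=\sqrt{n}\,\sqrt{k}$ after rescaling — more precisely the positive root of $\tfrac1n y^{2}-k=0$, namely $y=\sqrt{nk}$), then $\eta$ blows up to $+\infty$ in finite forward time, contradicting completeness of $\gamma$ on $[t_{0},\infty)$. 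Symmetrically, running the curve backwards (equivalently applying the argument to $-\xi$, or just noting $\eta$ is defined on all of $\mathbb{R}$), if $\eta(t_{0})<-\sqrt{nk}$ then $\eta$ blows down to $-\infty$ in finite backward time, again impossible. Hence $\lvert\eta(t)\rvert\leq\sqrt{nk}$ for all $t$, i.e. $\lvert H\rvert\leq\sqrt{nk}$ everywhere — wait, this gives the bound $\sqrt{nk}$, not $k$; I would double-check the normalization of the trace and of $\overline{Ric}(\xi)$ in the paper's conventions, since with the convention $\lvert\overset{\star}{A}_{\xi}\rvert^{2}\ge \tfrac1n H^2$ the natural output is $\lvert H\rvert\le\sqrt{nk}$, and the stated $\lvert H\rvert\le k$ presumably corresponds to a different normalization (e.g. $H$ already the normalized mean curvature, or $k$ absorbing the factor $n$) or to the case $n=1$ reading; I would reconcile the statement with the constant that the Riccati argument actually produces.

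The main obstacle, and the point needing care, is precisely this blow-up-in-finite-time step: one must verify that the integral curves of $\xi$ are complete \emph{as curves in} $(M,\widetilde{g})$ (which follows from metric completeness since $\xi$ is $\widetilde{g}$-unit, so integral curves have finite length on bounded time intervals and cannot escape), and then that a solution of $\eta'\geq \tfrac1n\eta^2-k$ with $\eta(t_0)$ beyond the equilibrium genuinely escapes to infinity before a finite time — this is the elementary separation-of-variables estimate $\int \tfrac{d\eta}{\tfrac1n\eta^2-k}<\infty$ near $+\infty$. A secondary technical point is justifying the pointwise Ricci identity above directly for the given closed normalization with $\tau(\xi)=0$ (rather than after the conformal change used in Proposition~\ref{prop3}); this follows from the same Gauss equation computation together with $\tau(\xi)=0$, which forces the term $\tau(\xi)H$ to drop out. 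Once these are in place the conclusion is immediate. I would also remark that the hypothesis $\tau(\xi)=0$ is used exactly to kill the $\tau(\xi)H$ term so that the Riccati inequality is autonomous; without it one would get an extra linear term $\tau(\xi)H$ that would need to be controlled.
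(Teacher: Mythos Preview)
Your Riccati argument along the integral curves of $\xi$ is correct and complete: the identity $\overline{Ric}(\xi)=\xi(H)+\tau(\xi)H-\lvert\overset{\star}{A}_{\xi}\rvert^{2}$ holds for any normalization, $\tau(\xi)=0$ kills the middle term, and $\widetilde{g}$-completeness together with $\lvert\xi\rvert_{\widetilde{g}}=1$ makes the integral curves complete, so the finite-time blow-up yields the bound. The paper proceeds differently: it lifts everything to the semi-Riemannian universal cover $\overline{M}'\to\overline{M}$, where the pulled-back closed rigging becomes exact and produces a global function $f$ on $M'$ with $\widetilde{\nabla}'f=\xi'$; it then invokes the identity $\overline{Ric}(\xi)=\widetilde{Ric}(\xi)+\tau(\xi)H$ (valid for closed normalizations) to transfer the Ricci lower bound to $\widetilde{Ric}'(\xi')$, and finally quotes Sakai's theorem on complete Riemannian manifolds admitting a function with $\lvert\nabla f\rvert=1$ to bound $\lvert\widetilde{\Delta}'f\rvert=\lvert H'\rvert$. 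Your route avoids both the universal-cover passage and the black-box citation, and in fact reproves Sakai's result in situ; it also makes visible that the closedness of $\zeta$ is used only to ensure the integral curves of $\xi$ are $\widetilde{g}$-geodesics, which is not strictly needed since a unit vector field on a complete Riemannian manifold is complete anyway. Your doubt about the constant is well founded: the Riccati inequality $\eta'\geq\tfrac1n\eta^{2}-k$ gives $\lvert H\rvert\leq\sqrt{nk}$, and the warped product $\mathbb{R}\times_{e^{ct}}N^{n}$ (where $\overline{Ric}(\partial_t)=-nc^{2}$ and $\Delta t=nc$) shows this is sharp, so the bound $\lvert H\rvert\leq k$ as stated (and the quoted form of Sakai's theorem) cannot be correct without a dimensional factor.
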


The proof uses the following.

\begin{thm}
[\cite{bi2}]\label{tak 1} Let $(M,g)$ be a complete connected Riemannian
manifold such that there exists $f:M\longrightarrow\mathbb{R}$ satisfying
$\left\vert \nabla f\right\vert =1$. Suppose that $\left.  Ric(\nabla f,\nabla
f)\geq-k\right.  $\thinspace\ ($k$ a nonnegative constant), then $\lvert\Delta
f\lvert\leq k$.
\end{thm}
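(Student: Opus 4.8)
The plan is to prove this via the Bochner–Weitzenböck formula applied to the condition $\lvert\nabla f\rvert\equiv 1$. Since $\lvert\nabla f\rvert^{2}$ is constant we have $\Delta\lvert\nabla f\rvert^{2}=0$, so Bochner's identity
\[
\tfrac12\Delta\lvert\nabla f\rvert^{2}=\lvert\mathrm{Hess}\,f\rvert^{2}+\langle\nabla f,\nabla(\Delta f)\rangle+Ric(\nabla f,\nabla f)
\]
collapses to the pointwise identity $\langle\nabla f,\nabla(\Delta f)\rangle=-\lvert\mathrm{Hess}\,f\rvert^{2}-Ric(\nabla f,\nabla f)$. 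The other structural fact I would record first is that, because $\lvert\nabla f\rvert\equiv1$, one has $\nabla_{\nabla f}\nabla f=\tfrac12\nabla\lvert\nabla f\rvert^{2}=0$, so the integral curves of $\nabla f$ are unit-speed geodesics; completeness of $(M,g)$ then guarantees that each such geodesic $\gamma$ is defined on all of $\mathbb{R}$. This is exactly what will turn a pointwise differential inequality into a global a priori bound.

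Next I would restrict to a single flow line $\gamma(t)$ and study $u(t):=\Delta f(\gamma(t))$. Since $\dot\gamma=\nabla f$ along $\gamma$, the identity above reads $u'(t)=-\lvert\mathrm{Hess}\,f\rvert^{2}-Ric(\nabla f,\nabla f)$, whence the curvature hypothesis gives the one-sided estimate $u'(t)\le -\lvert\mathrm{Hess}\,f\rvert^{2}+k$. The key algebraic input is that $\nabla f$ lies in the kernel of $\mathrm{Hess}\,f$ (again from $\lvert\nabla f\rvert\equiv1$), so the Hessian is supported on the $(\dim M-1)$-dimensional orthogonal complement and Cauchy–Schwarz yields $\lvert\mathrm{Hess}\,f\rvert^{2}\ge(\Delta f)^{2}/(\dim M-1)$. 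Feeding this in produces a Riccati-type differential inequality $u'\le -c\,u^{2}+k$ for a fixed constant $c>0$, valid along the complete geodesic $\gamma$.

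The heart of the argument, and the main obstacle, is to convert this Riccati inequality into a two-sided bound on $u$ by a blow-up/comparison argument that exploits completeness. If $u(t_{0})$ exceeded the positive equilibrium of the comparison ODE $v'=-c\,v^{2}+k$, then integrating the inequality \emph{backward} in time forces $u$ to dominate a solution that escapes to $+\infty$ in finite time; since $\gamma$, and hence $u$, is defined and finite for every $t\in\mathbb{R}$, this is a contradiction and yields the upper bound. Integrating \emph{forward} in time rules out $u$ falling below the negative equilibrium in the same way, giving the lower bound. Combining the two produces the asserted estimate $\lvert\Delta f\rvert\le k$. The delicate points I would have to handle carefully are the finite-time blow-up estimate for the comparison ODE (ensuring the escape time is genuinely finite in both directions) and the bookkeeping of the constant issuing from the Cauchy–Schwarz step, which is precisely where the sharp value of the bound is pinned down.
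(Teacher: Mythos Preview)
The paper does not supply a proof of this statement: Theorem~\ref{tak 1} is quoted verbatim from Sakai~\cite{bi2} and used as a black box in the proofs of Theorems~\ref{thm1} and~\ref{iso}. So there is no ``paper's own proof'' to compare against; your outline is in fact a sketch of the standard Bochner--Riccati argument that underlies Sakai's result.

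Your strategy is the right one and the structural steps are correct: from $|\nabla f|\equiv 1$ one gets $\nabla_{\nabla f}\nabla f=0$, hence complete geodesic flow lines; the Bochner identity then reduces to $u'=-|\mathrm{Hess}\,f|^{2}-Ric(\nabla f,\nabla f)$ along a flow line, and the Riccati blow-up argument (forward and backward) you describe is exactly how one extracts a two-sided bound on $u=\Delta f$.

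There is, however, a genuine issue with the constant. With the Cauchy--Schwarz step $|\mathrm{Hess}\,f|^{2}\ge (\Delta f)^{2}/(\dim M-1)$ you obtain $u'\le -\dfrac{u^{2}}{\dim M-1}+k$, whose equilibria are $\pm\sqrt{(\dim M-1)\,k}$. The blow-up comparison therefore yields
\[
|\Delta f|\le \sqrt{(\dim M-1)\,k},
\]
not $|\Delta f|\le k$ as stated. You flag ``the bookkeeping of the constant'' as delicate, but this is not a bookkeeping matter: the Riccati equilibrium is what it is, and no rearrangement of the Cauchy--Schwarz step will produce the dimension-free bound $k$. Either the statement recorded here differs from Sakai's original (which would affect the downstream applications in Theorems~\ref{thm1} and~\ref{iso}), or an additional idea beyond the Bochner--Riccati scheme is required. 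As written, your outline proves a theorem, just not quite the one stated.
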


We give now the proof of Theorem~\ref{thm1}.

\begin{proof}
Let\textbf{{ }}$\overline{\pi}:(\overline{M}^{\prime},\overline{g}^{\prime
})\rightarrow(\overline{M},\overline{g})$ be the semi-Riemannian universal
covering of $\overline{M}$. Define $M^{\prime}=\overline{\pi}^{-1}(M)$ which
is a null hypersurface because $\overline{\pi}$ is a local isometry and call
$i^{\prime}:M^{\prime}\rightarrow\overline{M}^{\prime}$ the canonical
inclusion. The closed rigging $\zeta$ can be lifted to a closed rigging
$\zeta^{\prime}$ on $M^{\prime}$. Call $\alpha^{\prime}=\overline{\pi}^{\ast
}\alpha$ its equivalent $1$-form being $\alpha$ the equivalent $1$-form to
$\zeta$. The rigged metric on $M^{\prime}$ is $\widetilde{g}^{\prime
}=i^{\prime\ast}(\overline{g}^{\prime}+\alpha^{\prime}\otimes\alpha^{\prime}%
)$. Using the following conmutative diagram
\[%
\xy\xymatrix{
{\overline M'} \ar[r]^{\overline{\pi}} & \overline M \\
M' \ar[u]^{i'} \ar[r]^{\pi} & M \ar[u]_i
}
\endxy
\]
where $\pi$ is the canonical projection from $M^{\prime}$ onto $M$, it is
clear that $\omega^{\prime}=\pi^{\ast}\omega$ where $\omega$ is the equivalent
$1$-form to the rigged field $\xi$ on $M$, and $\widetilde{g}^{\prime}%
=\pi^{\ast}\widetilde{g}$. If we call $\xi^{\prime}$ the rigged vector field
on $M^{\prime}$ induced by $\zeta^{\prime}$, we have $\pi_{\ast}\xi^{\prime
}=\xi\circ\pi$.

Using that $\pi$ is a local isometry, we have $\tau^{\prime}(\xi^{\prime})=0$,
$\widetilde{g}^{\prime}$ is complete and $\left.  \overline{Ric}^{\prime}%
(\xi^{\prime})\geq-k.\right.  $

Since $\overline{M}^{\prime}$ is simply connected and $\zeta^{\prime}$ is
closed, we know there exists $\left.  f:M^{\prime}\longrightarrow
\mathbb{R}\right.  $ such that $\widetilde{\nabla}^{\prime}f=\xi^{\prime}$ and
thus $\widetilde{g}^{\prime}(\widetilde{\nabla}^{\prime}f,\widetilde{\nabla
}^{\prime}f)=1$. Moreover, for a closed normalization, we have (see
\cite{bi0})%
\[
\ \overline{Ric}^{\prime}(\xi^{\prime})=\widetilde{Ric}^{\prime}(\xi^{\prime
})+\tau^{\prime}(\xi^{\prime})H^{\prime}%
\]
being $H^{\prime}$ the null mean curvature of $M^{\prime}$. Since
$\tau^{\prime}(\xi^{\prime})=0$, then we get $\left.  \overline{Ric}^{\prime
}(\xi^{\prime})=\widetilde{Ric}^{\prime}(\xi^{\prime})\right.  $. From this,
we get $\left.  \widetilde{Ric}^{\prime}(\widetilde{\nabla}^{\prime
}f)=\widetilde{Ric}^{\prime}(\xi^{\prime})\geq-k\text{.}\right.  $ Finally
using the $\widetilde{g}^{\prime}$-completeness and Theorem~\ref{tak 1} we
have $\lvert\widetilde{\Delta}^{\prime}f\lvert\leq k$ on each connected
component of $M^{\prime}$, so on $M^{\prime}$ itself. But $H^{\prime
}=-\widetilde{\operatorname{div}}^{\prime}(\widetilde{\nabla}^{\prime
}f)=\varepsilon\widetilde{\Delta}^{\prime}f$ ($\varepsilon=\pm1$ according to
the sign convention of the Laplacian), then $\lvert H^{\prime}\lvert
_{\widetilde{g}^{\prime}}\leq k.$ Using again that $\pi\ $is a local isometry
and $H^{\prime}=\pi^{\ast}H$ we get $\left\vert H\right\vert \leq k$.
\end{proof}

\begin{coro}
Let $(\overline{M}^{n+2},\overline{g})$ be a Lorentzian manifold and
$(M,\zeta)$ be a closed normalization of a totally umbilic null hypersurface
with umbilicity factor $\rho$ (i.e. $B=\rho g$) such that $\tau(\xi)=0$. If
there exists a nonnegative constant $k$ such that $\overline{Ric}(\xi)\geq-k$
and $M$ is $\widetilde{g}$-complete then it holds $\lvert\rho\lvert\leq
\frac{k}{n}$.
\end{coro}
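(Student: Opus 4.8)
The plan is to reduce the statement directly to Theorem~\ref{thm1}. Observe first that a totally umbilic null hypersurface with umbilicity factor $\rho$ has $\overset{\star}{A}_{\xi}=\rho P$, so the null mean curvature is $H=\operatorname{trace}(\overset{\star}{A}_{\xi})=n\rho$, since $P$ projects onto the rank $n$ screen distribution $\mathscr{S}(\zeta)$. The hypotheses of the corollary are precisely those of Theorem~\ref{thm1}: $(M,\zeta)$ is a closed normalization with $\tau(\xi)=0$, $M$ is $\widetilde{g}$-complete, and $\overline{Ric}(\xi)\geq-k$ for a nonnegative constant $k$.

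Thus Theorem~\ref{thm1} applies verbatim and yields $\lvert H\rvert\leq k$. Substituting $H=n\rho$ gives $\lvert n\rho\rvert=n\lvert\rho\rvert\leq k$, hence $\lvert\rho\rvert\leq\frac{k}{n}$, which is the assertion.

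The only point deserving care — and the sole ``obstacle,'' though a minor one — is confirming the identity $H=n\rho$ under the stated convention $B=\rho g$. Indeed $B(X,Y)=g(\overset{\star}{A}_{\xi}X,Y)$ by \eqref{eq:6fo}, so $B=\rho g$ forces $\overset{\star}{A}_{\xi}=\rho P$ as noted in the text after \eqref{eq:derc}; taking the trace over an $\widetilde{g}$-orthonormal screen basis $(e_{2},\dots,e_{n+1})$ of $\mathscr{S}(\zeta)$ gives $H=\sum_{i=2}^{n+1}B(e_{i},e_{i})=\rho\sum_{i=2}^{n+1}g(e_{i},e_{i})=n\rho$, using that $g$ restricted to the screen agrees with $\widetilde{g}$ there so the $e_{i}$ are $g$-unit as well. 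Everything else is an immediate invocation of the already-proven theorem.
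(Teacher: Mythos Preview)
Your proof is correct and follows exactly the paper's approach: the paper's proof is simply ``The proof is straightforward using $H=n\rho$,'' and you have spelled out that identity and the invocation of Theorem~\ref{thm1} in full detail. Nothing is missing or different.
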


\begin{proof}
The proof is straightforward using $H=n\rho$.
\end{proof}

Since any Riemannian metric on a compact manifold is complete, the following
also holds:

\begin{coro}
\label{thm1a} Let $(\overline{M},\overline{g})$ be a Lorentzian manifold
admitting $(M,\zeta)$ a closed normalization of a compact null hypersurface
and suppose that $\tau(\xi)=0$. Assume there exists a nonnegative constant $k$
such that $\overline{Ric}(\xi)\geq-k$. Then we have $\lvert H\lvert\leq k$
where $H$ stands for the (non normalized) mean curvature of $M$.
\end{coro}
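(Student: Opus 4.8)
The plan is to derive Corollary~\ref{thm1a} directly from Theorem~\ref{thm1} by observing that the only hypothesis of Theorem~\ref{thm1} not explicitly assumed here is the $\widetilde{g}$-completeness of $M$, and this is automatic once $M$ is compact. So the proof is essentially a one-line reduction.

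First I would recall that on a compact manifold every Riemannian metric is complete: this is the classical consequence of the Hopf--Rinow theorem, since closed and bounded subsets of a compact space are automatically compact, hence the metric space is complete and geodesically complete. Applying this to the rigged Riemannian structure $(M,\widetilde{g})$ on our compact null hypersurface, we get that $(M,\widetilde{g})$ is complete. Thus all the hypotheses of Theorem~\ref{thm1} are in force: $(M,\zeta)$ is a closed normalization, $\tau(\xi)=0$, $M$ is $\widetilde{g}$-complete, and there is a nonnegative constant $k$ with $\overline{Ric}(\xi)\geq -k$.

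Then I would simply invoke Theorem~\ref{thm1} to conclude $\lvert H\rvert\leq k$, where $H$ is the non-normalized null mean curvature of $M$. No further argument is needed; in particular one does not even need to re-run the universal-covering argument, since it is already packaged inside Theorem~\ref{thm1}.

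There is no real obstacle here — the statement is a corollary precisely because compactness trivially supplies the missing completeness assumption. The only thing to be careful about is that Theorem~\ref{thm1} as stated does not require $M$ to be connected (it runs the argument on each connected component of the universal cover $M'$), so no connectedness hypothesis needs to be added in the compact case either; a compact null hypersurface may a priori have several components, and the bound $\lvert H\rvert\leq k$ holds on each. Hence the proof reads: ``Since $M$ is compact, the rigged metric $\widetilde{g}$ is complete, and the conclusion follows from Theorem~\ref{thm1}.''
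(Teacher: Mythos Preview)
Your proposal is correct and matches the paper's own argument exactly: the paper simply notes that any Riemannian metric on a compact manifold is complete, so the hypotheses of Theorem~\ref{thm1} are satisfied and the bound $\lvert H\rvert\leq k$ follows.
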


\begin{thm}
\label{thm3} Let $(\overline{M}^{n+2},\overline{g})$ be a simply connected
Lorentzian manifold and $(M,\zeta)$ a closed normalization of a non compact
null hypersurface such that $\tau(\xi)=0$. Suppose $k$ is a positive constant
such that $\left.  \overline{Ric}(\xi)\geq-nk^{2}\right.  $. If $M$ is
$\widetilde{g}-$complete and $\lvert H\lvert=nk$, then the hypersurface $M$
endowed with the Riemannian structure $\widetilde{g}$ is isometric to the
warped product $\mathbb{R}\times_{e^{\pm kt}}Z$ where $Z$ inherits a
Riemannian structure from $\overline{M}$. In particular $M$ is totally umbilic.
\end{thm}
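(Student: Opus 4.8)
The plan is to run the same universal-covering argument as in Theorem~\ref{thm1}, but to push it one step further by invoking a rigidity theorem in the equality case of the mean-curvature estimate. First I would lift everything to the universal cover $\overline{\pi}:\overline{M}'\to\overline{M}$ — which here is $\overline{M}$ itself modulo the fact that the problem already assumes $\overline{M}$ simply connected, so no covering is actually needed; $\zeta$ is closed on the simply connected $\overline{M}$, hence $\alpha=\overline{g}(\zeta,\cdot)=df$ for some $f\in C^\infty(\overline{M})$, and restricting gives $h=f\circ i$ on $M$ with $\widetilde{\nabla}h=\xi$ and $\widetilde{g}(\widetilde{\nabla}h,\widetilde{\nabla}h)=1$. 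Exactly as in Theorem~\ref{thm1}, since $\tau(\xi)=0$ we have $\overline{Ric}(\xi)=\widetilde{Ric}(\xi)$, so $\widetilde{Ric}(\widetilde{\nabla}h,\widetilde{\nabla}h)=\overline{Ric}(\xi)\ge -nk^2$, and $\widetilde{g}$ is complete.

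The key step is to replace the Takeuchi-type inequality of Theorem~\ref{tak 1} with the corresponding equality/rigidity statement: on a complete connected Riemannian manifold $(M,\widetilde g)$ carrying $h$ with $|\widetilde\nabla h|=1$ and $\widetilde{Ric}(\widetilde\nabla h,\widetilde\nabla h)\ge -nk^2$, if in addition $|\widetilde\Delta h|=nk$ identically, then $(M,\widetilde g)$ is isometric to a warped product $(\mathbb{R}\times Z, dt^2+e^{\pm 2kt}g_Z)$ with $h$ corresponding to the $\mathbb{R}$-coordinate. This is the rigidity part of the Bakry–Émery / Kasue–Takeuchi circle of results; concretely, the Bochner formula applied to $h$ together with $|\widetilde\nabla h|\equiv 1$ forces the Hessian of $h$ to be a pure multiple of the induced metric on $\xi^\perp$ (all excess terms in Bochner and in the Cauchy–Schwarz bound $|\widetilde{\nabla}^2 h|^2\ge (\widetilde\Delta h)^2/n$ must vanish), so $\widetilde{\nabla}^2 h = \mp k(\widetilde g - dh\otimes dh)$; integrating the level-set flow of $\widetilde\nabla h$ then yields the warped-product splitting with warping function $e^{\pm kt}$. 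One then translates back: $H=-\widetilde{\operatorname{div}}(\widetilde\nabla h)=\varepsilon\widetilde\Delta h$ gives $|H|=nk$ matching the hypothesis, and the vanishing-excess condition $\widetilde{\nabla}^2 h=\mp k(\widetilde g-dh\otimes dh)$ unwinds, via the closed-normalization formulas relating $\widetilde\nabla$, $\overset{\star}{A}_\xi$ and $B$ (as in the proof of Proposition~\ref{prop4}), to $\overset{\star}{A}_\xi=\mp k P$, i.e. $B=\mp k g$, so $M$ is totally umbilic with umbilicity factor $\mp k$. The leaf $Z=h^{-1}(0)$ is a leaf of $\mathscr{S}(\zeta)$ (here integrable since $\zeta$ is closed), inheriting its metric from $\overline{M}$, which identifies the fibre.

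The main obstacle I anticipate is the rigidity step itself: Theorem~\ref{tak 1} as quoted only gives the inequality $|\widetilde\Delta f|\le k$, not the classification in the borderline case, so I must either cite the sharp version of that theorem (the equality case in Kasue's or Takeuchi's splitting theorem, or equivalently the one-dimensional Bakry–Émery comparison with rigidity) or supply the Bochner-formula argument in full, carefully checking that $|\widetilde\nabla h|\equiv 1$ makes the level sets of $h$ the fibres of a genuine Riemannian submersion onto an interval and that completeness upgrades the local warped splitting to a global one $\mathbb{R}\times_{e^{\pm kt}}Z$. A secondary technical point is bookkeeping of signs: the $\pm$ in $e^{\pm kt}$ corresponds to the sign of $H$ (equivalently of the constant in $\widetilde{\nabla}^2 h$), and one should note that $h$ being defined up to an additive constant lets us normalize $Z=h^{-1}(0)$, while the two signs are genuinely distinct warped products related by the reflection $t\mapsto -t$ in the base.
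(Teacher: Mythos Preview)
Your proposal is correct and follows essentially the same route as the paper: produce $h$ with $\widetilde{\nabla}h=\xi$, $|\widetilde{\nabla}h|=1$, transfer the Ricci bound via $\tau(\xi)=0$, and then invoke a rigidity theorem for complete Riemannian manifolds carrying a unit-gradient function in the equality case. The paper does not redo the Bochner argument you sketch; it quotes Sakai's warped-product theorem (Theorem~\ref{tak2}) with $\phi(t)=e^{kt}$ as a black box for exactly this step, and for the umbilicity conclusion it cites \cite[Corollary~3.14]{bi0} rather than reading $\overset{\star}{A}_\xi=\mp kP$ off the Hessian as you do.
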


The proof makes use of the following.

\begin{thm}
[\cite{bi1}, Theorem~$1\cdot1$]\label{tak2} Let $(M,g)$ be a complete
connected Riemannian manifold such that there exists $f:M\longrightarrow
\mathbb{R}$ satisfying $\left\vert \nabla f\right\vert =1$. Suppose
\[
Ric(\nabla f,\nabla f)\geq-n\frac{{\phi}^{\prime\prime}(f(x))}{\phi(f(x))}%
\]
(resp $Ric(\nabla f,\nabla f)\geq-n\frac{({\phi^{\star}})^{\prime\prime
}(f(x))}{\phi^{\star}(f(x))})$.

If $\Delta f=-n\frac{{\phi}^{\prime}(f(x))}{\phi(f(x))}$ (resp $\Delta
f=n\frac{{\phi}^{\prime}(-f(x))}{\phi(-f(x))})$ then
\begin{align}
\Phi &  :\mathbb{R}\times_{\phi}Z\longrightarrow M\nonumber\\
&  (s,p)\longmapsto\psi_{s}(p)\nonumber
\end{align}
is an isometry (resp
\begin{align}
\Phi &  :\mathbb{R}\times_{\phi^{\star}}Z\longrightarrow M\nonumber\\
&  (s,p)\longmapsto\psi_{s}(p)\nonumber
\end{align}
is an isometry) where $\psi_{s}$ is the flow of $\nabla f$, $\phi
:\mathbb{R}\longrightarrow\mathbb{R}^{+}$ a smooth positive function,
$\phi^{\star}(t)=\phi(-t)$ and $Z=f^{-1}(0)$.
\end{thm}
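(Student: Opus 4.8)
The plan is to recover, from the two scalar hypotheses, the full tensorial Hessian identity that characterises the warping function of a warped product, and then to integrate the flow of $\nabla f$. First, since $|\nabla f|=1$, differentiating $|\nabla f|^{2}$ gives $\nabla_{\nabla f}\nabla f=0$, so the integral curves of $\nabla f$ are unit-speed geodesics; completeness of $(M,g)$ makes the flow $\psi_{s}$ complete, and $\frac{d}{ds}f(\psi_{s}(p))=|\nabla f|^{2}=1$ gives $f(\psi_{s}(p))=f(p)+s$. In particular $f$ has no critical points, every flow line meets $Z=f^{-1}(0)$, and the model metric to reproduce is that of $\mathbb{R}\times_{\phi}Z$, whose first coordinate $s$ satisfies $\mathrm{Hess}\,s=\frac{\phi'}{\phi}(g-ds\otimes ds)$ and, in the sign convention making the stated Laplacian equation hold, $\Delta s=-n\,\phi'/\phi$. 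Thus the two hypotheses are exactly the trace and Ricci shadows of the model, and the task is to promote them to the full tensor.

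The heart of the argument, and the step I expect to be the main obstacle, is to upgrade the hypotheses to the pointwise identity $\mathrm{Hess}\,f=\frac{\phi'(f)}{\phi(f)}\,(g-df\otimes df)$. I would feed $|\nabla f|^{2}\equiv 1$ into the Bochner formula $\tfrac12\,\mathrm{div}\,\nabla|\nabla f|^{2}=|\mathrm{Hess}\,f|^{2}+\langle\nabla f,\nabla(\mathrm{div}\,\nabla f)\rangle+\mathrm{Ric}(\nabla f,\nabla f)$, whose left-hand side vanishes. The Laplacian hypothesis fixes $\mathrm{div}\,\nabla f=n\,\phi'/\phi\circ f$, so the drift term equals $n(\phi'/\phi)'\circ f=n\phi''/\phi-n(\phi'/\phi)^{2}$ evaluated at $f$; combined with the Ricci lower bound this yields $|\mathrm{Hess}\,f|^{2}\le n(\phi'/\phi)^{2}\circ f$. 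On the other hand, because $\mathrm{Hess}\,f(\nabla f,\cdot)=0$, the Hessian is supported on the $n$-dimensional distribution $(\nabla f)^{\perp}$ (note $\dim M-1=n$), and the Cauchy--Schwarz inequality gives $|\mathrm{Hess}\,f|^{2}\ge(\mathrm{tr}\,\mathrm{Hess}\,f)^{2}/n=n(\phi'/\phi)^{2}\circ f$. The two estimates coincide, forcing equality in both: the Ricci bound is saturated, and equality in Cauchy--Schwarz means $\mathrm{Hess}\,f$ is a multiple of the identity on $(\nabla f)^{\perp}$, which is precisely the claimed identity. The delicate points are the bookkeeping of the Laplacian sign convention and the dimension count ensuring the Cauchy--Schwarz constant is exactly $n$.

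With the Hessian identity in hand, I would define $\Phi:\mathbb{R}\times Z\to M$, $(s,p)\mapsto\psi_{s}(p)$, and show it is an isometry onto $(\mathbb{R}\times_{\phi}Z,\ ds^{2}+\phi^{2}g_{Z})$. That $\Phi$ is a diffeomorphism follows from completeness and connectedness exactly as in the $|\nabla f|=1$ splitting already used in the proof of Theorem~\ref{main}: one has $f=\mathrm{pr}_{1}\circ\Phi^{-1}$, injectivity comes from $f(\psi_{s}(p))=s$, and surjectivity from the fact that each complete flow line meets $Z$. To identify the metric I would compute three derivatives along the flow. Writing $\partial_{s}=\nabla f$ and, for $X\in T_{p}Z$, $X_{s}=\psi_{s*}X$ (so $[\partial_{s},X_{s}]=0$), one gets $g(\partial_{s},\partial_{s})=1$; next $\frac{d}{ds}g(\partial_{s},X_{s})=\mathrm{Hess}\,f(\nabla f,X_{s})=0$, so with the initial condition $X\perp\nabla f$ on $Z$ the flow preserves orthogonality, $g(\partial_{s},X_{s})\equiv0$; finally $\frac{d}{ds}g(X_{s},Y_{s})=2\,\mathrm{Hess}\,f(X_{s},Y_{s})=2\frac{\phi'(s)}{\phi(s)}g(X_{s},Y_{s})$, using $f=s$ along the flow and $X_{s},Y_{s}\perp\nabla f$. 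Integrating this linear ODE gives $g(X_{s},Y_{s})=\big(\phi(s)/\phi(0)\big)^{2}g_{Z}(X,Y)$ with $g_{Z}=g|_{Z}$, so after the harmless normalisation $\phi(0)=1$ (absorbing a constant into $g_{Z}$) we obtain $\Phi^{*}g=ds^{2}+\phi^{2}g_{Z}$. Hence $\Phi$ is the desired isometry.

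The parenthetical (resp.) statement is the mirror case: replacing $f$ by $-f$ (equivalently $\phi$ by $\phi^{\star}(t)=\phi(-t)$) turns the second set of hypotheses into the first, and running the identical argument produces the isometry with $\mathbb{R}\times_{\phi^{\star}}Z$.
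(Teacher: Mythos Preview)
The paper does not give its own proof of Theorem~\ref{tak2}: it is quoted verbatim from Sakai~\cite{bi1} and used as a black box in the proof of Theorem~\ref{thm3}. There is therefore nothing in the paper to compare your argument against.

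That said, your proposal is essentially Sakai's original argument and is correct. The Bochner identity with $|\nabla f|^{2}\equiv 1$ combined with the Cauchy--Schwarz estimate $|\mathrm{Hess}\,f|^{2}\ge(\mathrm{tr}\,\mathrm{Hess}\,f)^{2}/n$ on the $n$-dimensional orthogonal distribution $(\nabla f)^{\perp}$ squeezes $|\mathrm{Hess}\,f|^{2}$ to the value $n(\phi'/\phi)^{2}\circ f$, forcing the umbilic Hessian $\mathrm{Hess}\,f=\tfrac{\phi'(f)}{\phi(f)}(g-df\otimes df)$; the flow integration thereafter is standard. Your handling of the two points you flag is also right: the sign convention $\Delta=-\mathrm{div}\,\nabla$ is the one that makes the stated equation model the warped product, and the dimension $n=\dim M-1$ matches the paper's application in Theorem~\ref{thm3}, where the trace is taken over the $n$-dimensional screen of the $(n+1)$-dimensional null hypersurface. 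The only cosmetic point is that the theorem does not assert $\phi(0)=1$, so the metric on $Z$ should be taken as $\phi(0)^{-2}g|_{Z}$ rather than $g|_{Z}$; you note this absorption yourself.
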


\begin{proof}
(of Theorem~\ref{thm3})

We know that there exists $f:\overline{M}\longrightarrow\mathbb{R}$ such that
$\widetilde{\nabla}f=\xi$ and then $\widetilde{g}(\widetilde{\nabla
}f,\widetilde{\nabla}f)=1$. Let $\phi(t)=e^{kt}$ ($t\in\mathbb{R}$). It
follows that $\frac{{\phi}^{\prime\prime}(f(x))}{\phi(f(x))}=k^{2}$ and
$\frac{{\phi}^{\prime}(f(x))}{\phi(f(x))}=k$. We deduce using the assumption
on the ricci curvature that $\overline{Ric}(\xi)=\widetilde{Ric}(\xi
)\geq-n\frac{{\phi}^{\prime\prime}(f(x))}{\phi(f(x))}$ that is $\widetilde
{Ric}(\widetilde{\nabla}f,\widetilde{\nabla}f)\geq-n\frac{{\phi}^{\prime
\prime}(f(x))}{\phi(f(x))}$. Moreover, if $H=-nk$ then $H=\widetilde{\Delta
}f=-n\frac{{\phi}^{\prime}(f(x))}{\phi(f(x))}$. Using Theorem~\ref{tak2} we
conclude that $(M,\widetilde{g})$ is isometric to the warped product
$\mathbb{R}\times_{e^{kt}}Z$ where $Z=f^{-1}(0)$ is endowed with the induced
Riemannian metric. If $H=nk$, $(M,\widetilde{g})$ is isometric to the warped
product $\mathbb{R}\times_{e^{-kt}}Z$. Finally, since each $Z$ with the
Riemannian structure induced from the warped structure is totally umbilic in
$(M,\widetilde{g})$, the null hypersurface $M$ is totally umbilic.
\cite[Corollary 3.14]{bi0}.
\end{proof}

\begin{thm}
\label{iso} Let $(\overline{M},\overline{g})$ be a simply connected Lorentzian
manifold and $(M,\zeta)$ a $\widetilde{g}$-complete closed normalization of a
null hypersurface $M$, all of whose screen principal curvatures are constant.
Then it holds $\lvert H\lvert\leq\left\vert \overset{\ast}{A_{\xi}
}\right\vert ^{2}.$
\end{thm}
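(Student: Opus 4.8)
\medskip

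The plan is to exploit, just as in the proof of Theorem~\ref{thm1}, the identity valid for a closed normalization,
\[
\overline{Ric}(\xi)=\widetilde{Ric}(\xi)+\tau(\xi)H,
\]
together with the Bochner-type/Raychaudhuri-type formula already used in the proof of Proposition~\ref{prop3},
\[
\overline{Ric}(\xi)=\xi(H)+\tau(\xi)H-\lvert\overset{\star}{A}_{\xi}\rvert^{2}.
\]
Since the screen principal curvatures are constant (call them $\lambda_{2},\dots,\lambda_{n+1}$), the functions $H=\sum_i\lambda_i$ and $\lvert\overset{\star}{A}_{\xi}\rvert^{2}=\sum_i\lambda_i^{2}$ are constant along $M$; in particular $\xi(H)=0$. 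Feeding this into the Raychaudhuri formula gives $\overline{Ric}(\xi)=\tau(\xi)H-\lvert\overset{\star}{A}_{\xi}\rvert^{2}$, hence $\widetilde{Ric}(\xi)=-\lvert\overset{\star}{A}_{\xi}\rvert^{2}$, a constant.

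\medskip

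First I would pass, exactly as in the proof of Theorem~\ref{thm1}, to the universal semi-Riemannian covering $\overline{\pi}:\overline{M}'\to\overline{M}$, lift $\zeta$ to a closed rigging $\zeta'$ on $M'=\overline{\pi}^{-1}(M)$, and note that all the relevant quantities ($\tau'(\xi')$, the screen principal curvatures, $\widetilde{g}'$-completeness) are preserved because $\pi$ is a local isometry; it suffices to prove $\lvert H\rvert\le\lvert\overset{\star}{A}_{\xi}\rvert^{2}$ on $M'$. On $M'$, closedness of $\zeta'$ and simple connectedness give $f:M'\to\mathbb{R}$ with $\widetilde{\nabla}'f=\xi'$, so $\lvert\widetilde{\nabla}'f\rvert=1$, and $\widetilde{\Delta}'f=\pm H'$ as in the proof of Theorem~\ref{thm1}. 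Now $\widetilde{Ric}'(\widetilde{\nabla}'f,\widetilde{\nabla}'f)=-\lvert\overset{\star}{A}_{\xi}\rvert^{2}$ is a nonnegative constant times $-1$, so Theorem~\ref{tak 1} applies with $k=\lvert\overset{\star}{A}_{\xi}\rvert^{2}$, yielding $\lvert\widetilde{\Delta}'f\rvert\le\lvert\overset{\star}{A}_{\xi}\rvert^{2}$, i.e. $\lvert H'\rvert\le\lvert\overset{\star}{A}_{\xi}\rvert^{2}$, and pulling back to $M$ finishes the argument.

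\medskip

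The one genuine subtlety, and the step I expect to need the most care, is the claim that $\widetilde{Ric}(\xi)$ reduces to $-\lvert\overset{\star}{A}_{\xi}\rvert^{2}$: this relies on the Raychaudhuri identity $\overline{Ric}(\xi)=\xi(H)+\tau(\xi)H-\lvert\overset{\star}{A}_{\xi}\rvert^{2}$ (used without the hypothesis $\tau(\xi)=0$, unlike in Theorem~\ref{thm1}) and on $\xi(H)=0$. The vanishing of $\xi(H)$ comes from the hypothesis that the screen principal curvatures are constant \emph{on $M$}, hence so is their sum $H$; I would spell this out, and also check that ``screen principal curvature'' is normalization-independent here only up to the relevant conformal factor so that the statement $\lvert H\rvert\le\lvert\overset{\star}{A}_{\xi}\rvert^{2}$ is meaningful for the chosen $\zeta$. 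Apart from that, everything is a direct transcription of the covering-space argument of Theorem~\ref{thm1} with the constant $k$ replaced by $\lvert\overset{\star}{A}_{\xi}\rvert^{2}$ and Theorem~\ref{tak 1} invoked verbatim.
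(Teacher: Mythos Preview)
Your argument is correct and follows essentially the same route as the paper: combine the Raychaudhuri identity with the closed-normalization relation $\widetilde{Ric}(\xi)=\overline{Ric}(\xi)-\tau(\xi)H$ to get $\widetilde{Ric}(\xi)=-\lvert\overset{\star}{A}_{\xi}\rvert^{2}$ (constant), produce $f$ with $\widetilde{\nabla}f=\xi$, and apply Theorem~\ref{tak 1}. The only difference is that your passage to the universal cover is superfluous here, since the statement already assumes $\overline{M}$ simply connected; the paper simply uses that $\alpha$ is closed on a simply connected manifold, hence exact, so $\omega=i^{\star}\alpha$ is exact on $M$ directly.
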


\begin{proof}
We know that $\overline{Ric}(\xi)=\xi(H)+\tau(\xi)H-\left\vert \overset{\ast
}{A_{\xi}}\right\vert ^{2}$ (\cite{bi11}). Observe that since the screen
principal curvatures are constant, $H$ and $\left\vert \overset{\ast}{A_{\xi}%
}\right\vert $ are constant quantities and that for closed normalizations,
$\widetilde{Ric}(\xi)=\overline{Ric}(\xi)-\tau(\xi)H$. Then,%
\[
\widetilde{Ric}(\xi)=-\left\vert \overset{\ast}{A_{\xi}}\right\vert
^{2}=constant.
\]

Using the fact that $\overline{M}$ is simply connected and $\zeta$ is closed,
we know there exists $f:M\longrightarrow\mathbb{R}$ such that $\widetilde
{\nabla}f=\xi$ and thus $\widetilde{g}(\widetilde{\nabla}f,\widetilde{\nabla
}f)=1$. By Theorem~\ref{tak 1} it follows that $\lvert\widetilde{\Delta
}f\lvert=\lvert H\lvert\leq\left\vert \overset{\ast}{A_{\xi}}\right\vert ^{2}$.
\end{proof}

\begin{coro}
\label{coriso1} Let $(\overline{M}^{n+2},\overline{g})$ be a simply connected
Lorentzian manifold and $(M,\zeta)$ a $\widetilde{g}$-complete closed
normalization of a non-totally geodesic null hypersurface $M$ all of whose
screen principal curvatures are non negative constants. Then at least one of
them is greater or equal to 1.
\end{coro}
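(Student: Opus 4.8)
The plan is to read this off from Theorem~\ref{iso} plus an elementary termwise inequality. First I would fix notation: let $\lambda_{2},\dots,\lambda_{n+1}$ denote the screen principal curvatures, i.e. the eigenvalues of the self-adjoint operator $\overset{\star}{A}_{\xi}$ on the screen $\mathscr{S}(\zeta)$ (recall $\overset{\star}{A}_{\xi}$ is $\mathscr{S}(\zeta)$-valued and $\overset{\star}{A}_{\xi}\xi=0$). By hypothesis each $\lambda_{i}$ is a nonnegative constant. Diagonalizing in an orthonormal screen frame gives $H=\sum_{i=2}^{n+1}\lambda_{i}$ and $\lvert\overset{\star}{A}_{\xi}\rvert^{2}=\sum_{i=2}^{n+1}\lambda_{i}^{2}$; in particular $H\geq 0$, so $\lvert H\rvert=H$.

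Next I would apply Theorem~\ref{iso}: its hypotheses ($\overline{M}$ simply connected, $(M,\zeta)$ a $\widetilde{g}$-complete closed normalization, all screen principal curvatures constant) are exactly the ones assumed here, so it yields
\[
\sum_{i=2}^{n+1}\lambda_{i}=\lvert H\rvert\leq\lvert\overset{\star}{A}_{\xi}\rvert^{2}=\sum_{i=2}^{n+1}\lambda_{i}^{2}.
\]
Then I argue by contradiction. Suppose $0\leq\lambda_{i}<1$ for every $i$. For such values $\lambda_{i}^{2}\leq\lambda_{i}$, with equality only when $\lambda_{i}=0$, hence $\sum\lambda_{i}^{2}\leq\sum\lambda_{i}$; combined with the reverse inequality above this forces $\sum\lambda_{i}^{2}=\sum\lambda_{i}$, i.e. $\lambda_{i}^{2}=\lambda_{i}$ for each $i$, so every $\lambda_{i}\in\{0,1\}$. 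Since we assumed each $\lambda_{i}<1$, all of them vanish, i.e. $\overset{\star}{A}_{\xi}=0$ and $M$ is totally geodesic, contradicting the hypothesis that $M$ is non-totally geodesic. Therefore at least one $\lambda_{i}\geq 1$, which is the claim.

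Honestly, there is no serious obstacle in this proof: it is a short corollary of Theorem~\ref{iso}. The only points to be careful about are the reduction $\lvert H\rvert=H$ (which genuinely uses the nonnegativity of the screen principal curvatures, not merely their constancy) and the observation that ``non-totally geodesic'' translates into $\overset{\star}{A}_{\xi}\not\equiv 0$, equivalently at least one $\lambda_{i}>0$, so that the contradiction at the end is effective. One could also phrase the contradiction without cases by noting directly that $\sum\lambda_{i}(\lambda_{i}-1)\geq 0$ with all $\lambda_{i}\geq 0$ not all zero forces some $\lambda_{i}-1\geq 0$.
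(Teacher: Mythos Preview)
Your proof is correct and follows essentially the same approach as the paper: both apply Theorem~\ref{iso} to get $H\leq\lvert\overset{\star}{A}_{\xi}\rvert^{2}$ (using nonnegativity of the eigenvalues to drop the absolute value), then argue by contradiction from $0\leq\lambda_{i}<1\Rightarrow\lambda_{i}^{2}\leq\lambda_{i}$. The paper reaches the contradiction slightly more directly by noting that some $\lambda_{i_{0}}>0$ gives a strict inequality $\lambda_{i_{0}}>\lambda_{i_{0}}^{2}$, hence $H>\lvert\overset{\star}{A}_{\xi}\rvert^{2}$, whereas you pass through the equality case, but this is a cosmetic difference.
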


\begin{proof}
Since all the eigenvalues of $\overset{\star}{A_{\xi}}$ are non-negative, the
inequality in Theorem~\ref{iso} becomes $H\leq\left\vert \overset{\ast}%
{A_{\xi}}\right\vert ^{2}.$ Let denote the eigenvalues by $\lambda_{i}$. If we
suppose that all of them are less than 1, then we have $\lambda_{i}\geq
\lambda_{i}^{2}$. But as $M$ is non-totally geodesic, there exist $i_{0}$ such
that $\lambda_{i_{0}}$ is positive and then $\lambda_{i_{0}}>\lambda_{i_{0}
}^{2}$. It follows that $H>\left\vert \overset{\ast}{A_{\xi}}\right\vert ^{2}
$, which is in contradiction with $H\leq\left\vert \overset{\ast}{A_{\xi}
}\right\vert ^{2}.$
\end{proof}

\begin{coro}
\label{coriso2} Let $(\overline{M}^{n+2},\overline{g})$ be a simply connected
Lorentzian manifold and $(M,\zeta)$ a $\widetilde{g}$-complete closed
normalization of a proper totally umbilic null hypersurface $M$ with constant
umbilicity factor $\rho$. Then $|\rho|\geq1$.
\end{coro}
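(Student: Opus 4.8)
The plan is to reduce to Corollary~\ref{coriso1} (or directly to Theorem~\ref{iso}) by exploiting the umbilicity hypothesis, which pins down all the screen principal curvatures to a single value. First I would recall that $M$ totally umbilic with umbilicity factor $\rho$ means $B = \rho g$, equivalently $\overset{\star}{A}_{\xi} = \rho P$ on the screen $\mathscr{S}(\zeta)$, so the $n$ screen principal curvatures are all equal to $\rho$; since $\rho$ is assumed constant, all screen principal curvatures are constant, and thus the hypotheses of Theorem~\ref{iso} are met. This gives $\lvert H\rvert \le \lvert\overset{\star}{A}_{\xi}\rvert^{2}$.

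Next I would simply compute both sides in terms of $\rho$ and $n$. Since $\overset{\star}{A}_{\xi} = \rho P$ and $P$ is the projection onto the rank-$n$ screen distribution, we have $H = \operatorname{tr}\overset{\star}{A}_{\xi} = n\rho$ and $\lvert\overset{\star}{A}_{\xi}\rvert^{2} = n\rho^{2}$. Plugging these into the inequality of Theorem~\ref{iso} gives $n\lvert\rho\rvert \le n\rho^{2}$, i.e. $\lvert\rho\rvert \le \rho^{2}$. Because $M$ is \emph{proper} totally umbilic, $\rho$ is not identically zero; being constant, $\rho$ is a nonzero constant, so we may divide by $\lvert\rho\rvert > 0$ to conclude $1 \le \lvert\rho\rvert$, which is the claim.

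The only point requiring a little care — and the one I would flag as the main (mild) obstacle — is making sure the constancy and non-vanishing hypotheses are used correctly: one needs $\rho$ constant to invoke Theorem~\ref{iso} (which requires the screen principal curvatures to be constant so that $H$ and $\lvert\overset{\star}{A}_{\xi}\rvert$ are constant), and one needs $\rho\neq 0$ somewhere — hence everywhere, by constancy — to be allowed to cancel a factor of $\lvert\rho\rvert$; the word "proper" in the statement is precisely what guarantees this. Everything else is the routine substitution $H = n\rho$, $\lvert\overset{\star}{A}_{\xi}\rvert^{2} = n\rho^{2}$ already used in the corollary following Theorem~\ref{thm1}. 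One could alternatively phrase this as a direct instance of Corollary~\ref{coriso1} in the case $\rho>0$ and handle $\rho<0$ by replacing $\zeta$ with a rescaled rigging, but the direct computation above is cleaner and avoids sign bookkeeping.
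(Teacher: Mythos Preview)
Your proposal is correct and follows essentially the same route as the paper: invoke Theorem~\ref{iso} (constant screen principal curvatures), substitute $H=n\rho$ and $\lvert\overset{\star}{A}_{\xi}\rvert^{2}=n\rho^{2}$, and cancel $\lvert\rho\rvert\neq 0$ using the properness hypothesis. The paper's proof is exactly this computation, stated more tersely.
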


\begin{proof}
Because $\rho$ is constant, all the eigenvalues of $\overset{\ast}{A_{\xi}}$
are constant. Using theorem (\ref{iso}), we have $\left\vert H\right\vert
\leq\left\vert \overset{\ast}{A_{\xi}}\right\vert ^{2}.$ But since $M$ is
proper totally umbilic, $H=n\rho\not =0$ and $\left\vert \overset{\ast}%
{A_{\xi}}\right\vert ^{2}=n\rho^{2}$ so the inequality becomes $n\left\vert
\rho\right\vert \leq n\rho^{2}$ and we get $\left\vert \rho\right\vert \geq1$.
\end{proof}

\subsection{Null hypersurfaces with semi-definite shape $B$}

\label{semidefinitesharp} Positive definiteness of the second fundamental form
of hypersurfaces have many consequences in Riemannian geometry. A well-known
theorem due to Hadamard ~\cite{hadamard} (see also ~\cite[Theorem~2.4]%
{dajczer}) states that if the second fundamental form of a compact immersed
hypersurface $M$ of a Euclidean space is positive definite, then $M$ is
embedded as the boundary of a convex body. This also implies an equivalence
between definiteness of the second fundamental form and the fact that $M$ is
orientable and its (spherical) Gauss map is a diffeomorphism. Equivalently,
the Gaussian curvature of $M$ is nowhere-vanishing. In the present section, we
present some facts about definiteness of the second fundamental form $B$ and
geodesics relative to the rigged Riemannian structure. Recall that if a
complete Riemannian manifold supports a convex function, the latter is
constant along any closed geodesic, \cite[Proposition 2.1]{bi6}.

\begin{prop}
\label{prop3.1} Let $(M,\zeta)$ be a closed normalization of a null
hypersurface $M$ in a simply connected Lorentzian manifold $(\overline
{M},\overline{g})$. Assume $M$ is $\widetilde{g} $-complete and $B$ restricts
to a definite form on $\mathscr{S}(\zeta)$. Then $(M,\widetilde{g})$ contains
no closed geodesics.
\end{prop}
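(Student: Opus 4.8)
The plan is to produce a convex function on $(M,\widetilde g)$ out of the canonical potential attached to the closed rigging, and then to invoke \cite[Proposition 2.1]{bi6}: a convex function on a complete Riemannian manifold is constant along every closed geodesic.

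First, since $\overline M$ is simply connected and $\zeta$ is closed, $\alpha=\overline g(\zeta,\cdot)$ is exact, say $\alpha=df$ with $f\colon\overline M\to\mathbb R$. Then $\omega=i^\star\alpha=d(f\circ i)$, so for $h:=f\circ i$ we have $\widetilde\nabla h=\xi$ and $\widetilde g(\widetilde\nabla h,\widetilde\nabla h)=1$. Next I would compute the $\widetilde g$-Hessian $\widetilde\nabla^2 h(X,Y)=\widetilde g(\widetilde\nabla_X\xi,Y)$. Polarizing $\widetilde g(\xi,\xi)\equiv 1$ gives $\widetilde g(\widetilde\nabla_X\xi,\xi)=0$ for all $X$, hence $\widetilde\nabla^2 h$ vanishes on the line $\mathbb R\xi$ and on every mixed pair $(\xi,X)$ (in particular $\widetilde\nabla_\xi\xi=0$). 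On the screen, since $h$ is a potential its Hessian is symmetric, so $\widetilde\nabla^2 h(X,Y)=\tfrac12(L_\xi\widetilde g)(X,Y)$, and the identity $(L_\xi\widetilde g)(X,Y)=-2B(X,Y)$ for $X,Y\in\mathscr S(\zeta)$ recorded in the proof of Proposition~\ref{prop4} yields $\widetilde\nabla^2 h|_{\mathscr S(\zeta)}=-B|_{\mathscr S(\zeta)}$.

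Because $TM=\mathbb R\xi\oplus\mathscr S(\zeta)$ is a $\widetilde g$-orthogonal splitting, these two computations show that $\widetilde\nabla^2 h$ is semi-definite everywhere, of the same sign as $-B|_{\mathscr S(\zeta)}$. Since $B$ is definite on $\mathscr S(\zeta)$ by hypothesis, either $h$ (if $B|_{\mathscr S(\zeta)}$ is negative definite) or $-h$ (if it is positive definite) is convex on the complete manifold $(M,\widetilde g)$; call this function $u$, so $\widetilde\nabla^2 u\ge 0$, $|\widetilde\nabla u|=1$, and $\widetilde\nabla^2 u$ restricts to a positive definite form on $\mathscr S(\zeta)$ while vanishing in the $\xi$-direction. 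Now suppose, for contradiction, that $\gamma$ is a closed geodesic of $(M,\widetilde g)$. By \cite[Proposition 2.1]{bi6}, $u$ is constant along $\gamma$; then $0=\tfrac{d}{dt}(u\circ\gamma)=\widetilde g(\widetilde\nabla u,\gamma')$ forces $\gamma'(t)\in\mathscr S(\zeta)$ for all $t$, with $\gamma'\ne 0$ since $\gamma$ is a non-constant geodesic. Differentiating once more and using the geodesic equation, $0=\tfrac{d^2}{dt^2}(u\circ\gamma)=\widetilde\nabla^2 u(\gamma',\gamma')$, which contradicts the positive definiteness of $\widetilde\nabla^2 u$ on $\mathscr S(\zeta)$ evaluated at the nonzero screen vector $\gamma'$. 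Hence $(M,\widetilde g)$ has no closed geodesic.

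The routine parts are the exactness argument and the final two differentiations; the step I expect to be the real content is pinning down the $\widetilde g$-Hessian of $h$ — that it is $0$ along $\xi$ and $-B$ on the screen — since this is exactly what converts the algebraic definiteness of $B|_{\mathscr S(\zeta)}$ into convexity. I would also take care to double-check the sign and the hypotheses of the formula $(L_\xi\widetilde g)(X,Y)=-2B(X,Y)$ on $\mathscr S(\zeta)$, noting that here $\xi=\widetilde\nabla h$ is a gradient field, so no integrability assumption on the screen is needed for $\widetilde\nabla\xi$ to be self-adjoint.
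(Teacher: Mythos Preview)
Your proof is correct and follows essentially the same route as the paper's: produce a potential $h$ with $\widetilde\nabla h=\xi$, identify $\widetilde{\mathrm{Hess}}\,h=-B$ (the paper cites \cite[Proposition~3.15]{bi0} for this, while you derive it via $L_\xi\widetilde g$), invoke \cite[Proposition~2.1]{bi6} to force $u\circ\gamma$ constant along any closed geodesic, and then obtain the contradiction from definiteness of $B$ on $\mathscr S(\zeta)$. The only cosmetic difference is that you handle the sign by passing to $\pm h$, whereas the paper swaps $\zeta\leftrightarrow-\zeta$.
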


\begin{proof}
By a change of rigging $\zeta\longleftarrow-\zeta$ if neccessary, we can
suppose without loss of generality that $M$ is connected and that the
restriction of $B$ to $\mathscr{S}(\zeta)$ is negative definite which implies
that $B$ is negative semi-definite on$M$. Also, by the simply connectedness of
$\overline{M}$ and the closedness of the normalization, we know that there
exists $f:M\longrightarrow\mathbb{R}$ such that $\widetilde{\nabla}f=\xi$. Let
us remark that fibers of $f$ are leaves of $\mathscr{S}(\zeta)$ (see proof of
Theorem\ref{main}). Using \cite[Proposition 3.15.]{bi0}, we have
$\widetilde{Hess}f(U,V)=-B(U,V)$ for all $U,V\in TM$. $B$ being negative
semi-definite, $f$ is a convex function on $M$. Suppose that there exists a
closed geodesic $\gamma$ in $M$. Then $f\circ\gamma$ is a constant say $c$ (as
stated above \cite{bi6}) and $\gamma$ is contained in the leaf $f^{-1}(c)$ of
$\mathscr{S}(\zeta)$, hence $\gamma\prime\in\mathscr{S}(\zeta)$ and then
$\widetilde{Hess}f((\gamma\prime, \gamma\prime)> 0$, which gives the
contradiction as $\widetilde{Hess}f(\gamma\prime, \gamma\prime) =
(f\circ\gamma)\prime\prime= 0$; $f\circ\gamma$ being constant. We conclude
that $(M, \widetilde{g})$ contains no closed geodesics.
\end{proof}

\begin{rem}
\label{remclosedg} The proposition remain true if $\overline{M}$ is not simply
connected but the first De Rham cohomology group $H^{1}(M, \mathbb{R})$ is
trivial or the one form $\omega$ is exact so that the rigged vector field
$\xi$ is a gradient vector field.
\end{rem}

Since for proper totally umbilic null hypersurfaces the restriction of $B$ to
the screen structure $\mathscr{S}(\zeta)$ is always definite form, we easily
deduce the following.

\begin{coro}
\label{closedgeod} Let $(M,\zeta)$ be a closed normalization of a\ proper
totally umbilic null hypersurface $M$ in a simply connected Lorentzian
manifold $(\overline{M},\overline{g})$ such that $M$ is $\widetilde{g}%
$-complete. Then $(M,\widetilde{g})$ contains no closed geodesics.
\end{coro}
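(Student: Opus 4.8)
The plan is to deduce this directly from Proposition~\ref{prop3.1}; the only point requiring verification is that, for a proper totally umbilic null hypersurface, the second fundamental form $B$ automatically restricts to a definite bilinear form on the screen $\mathscr{S}(\zeta)$. Recall that $M$ being totally umbilic means there is a smooth function $\rho$ on $M$ with $B=\rho\,g$ on $TM$ (here $g$ is the induced degenerate metric), and that \emph{proper} totally umbilic means in addition that $\rho$ is nowhere vanishing. So at each point of $M$ one simply has $B|_{\mathscr{S}(\zeta)}=\rho\,g|_{\mathscr{S}(\zeta)}$, and it remains to understand $g|_{\mathscr{S}(\zeta)}$.

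Next I would observe that $g|_{\mathscr{S}(\zeta)}$ is positive definite. Indeed, by (\ref{eq:5}) one has the $\overline{g}$-orthogonal decomposition $T\overline{M}|_{M}=\{TM^{\perp}\oplus tr(TM)\}\dor\mathscr{S}(\zeta)$, and by (\ref{eq:3}) the plane $TM^{\perp}\oplus tr(TM)=\mathrm{span}(\xi,N)$ has Gram matrix $\left(\begin{smallmatrix}0&1\\1&0\end{smallmatrix}\right)$, hence is a Lorentzian plane of index $1$. Since $\overline{g}$ itself has index $1$, its restriction to the $\overline{g}$-orthogonal complement $\mathscr{S}(\zeta)$ must have index $0$, i.e.\ be Riemannian. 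Consequently $B|_{\mathscr{S}(\zeta)}=\rho\,g|_{\mathscr{S}(\zeta)}$ is, at every point of $M$, a definite form on $\mathscr{S}(\zeta)$, its sign being that of $\rho$ (hence locally constant and so constant on each connected component of $M$, although Proposition~\ref{prop3.1} does not even need this).

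Finally, the remaining hypotheses of Proposition~\ref{prop3.1} hold by assumption: $(M,\zeta)$ is a closed normalization of a null hypersurface in the simply connected Lorentzian manifold $(\overline{M},\overline{g})$, and $(M,\widetilde{g})$ is complete. Applying Proposition~\ref{prop3.1} gives that $(M,\widetilde{g})$ contains no closed geodesics, which is the claim. There is essentially no obstacle in this argument; the only step worth isolating is the signature count showing $g$ is Riemannian on the screen, since that is exactly what ensures that rescaling it by the nowhere-zero factor $\rho$ preserves definiteness. (By Remark~\ref{remclosedg}, one could equally replace simple connectedness of $\overline{M}$ by triviality of $H^{1}(M,\mathbb{R})$ or by exactness of $\omega$, at the cost of the obvious modification.)
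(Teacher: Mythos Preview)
Your proof is correct and follows exactly the paper's approach: the corollary is deduced immediately from Proposition~\ref{prop3.1} once one notes that $B=\rho g$ with $\rho$ nowhere zero and $g|_{\mathscr{S}(\zeta)}$ positive definite, hence $B|_{\mathscr{S}(\zeta)}$ is definite. Your signature argument making the positive definiteness of $g|_{\mathscr{S}(\zeta)}$ explicit is a welcome elaboration of what the paper leaves implicit.
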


The next result gives a restriction on the topology of proper totally umbilic
null surface (in 3-dimensional Lorentzian manifold) which can admit a $\left.
\widetilde{g}\text{-complete}\right.  $ Riemannian metric for a given rigging.

\begin{thm}
\label{thm4b} Let $(M,\zeta)$ be a closed normalization of a null surface $M$
non totally geodesic at any point in a simply connected 3-dimensional
Lorentzian manifold $(\overline{M}^{3},\overline{g})$ such that $M$ is
$\widetilde{g}$-complete, then $M$ is homeomorphic to the plane or the cylinder.
\end{thm}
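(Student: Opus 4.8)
The plan is to exploit the rigged structure to produce a proper Morse-type function on $M$ with controlled critical behavior, and then invoke classical surface-classification results. Since $(\overline{M}^3,\overline{g})$ is simply connected and the normalization is closed, the $1$-form $\omega$ on $M$ is exact, so there is $f:M\to\mathbb{R}$ with $\widetilde{\nabla}f=\xi$ and hence $|\widetilde{\nabla}f|\equiv 1$. Exactly as recalled in the proof of Theorem~\ref{main}, this forces $M$ to be diffeomorphic (not merely homotopy equivalent) to $\mathbb{R}\times f^{-1}(0)$, with $f$ corresponding to the first projection; in particular $f$ has no critical points, $M$ is noncompact, and the diffeomorphism type of $M$ is determined by the $1$-manifold $f^{-1}(0)$. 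Since $M$ is a connected surface, $f^{-1}(0)$ is a connected $1$-manifold, hence either $\mathbb{R}$ or $S^1$, giving $M\cong\mathbb{R}^2$ (the plane) or $M\cong\mathbb{R}\times S^1$ (the cylinder). Thus the whole statement reduces to showing $f^{-1}(0)$ is connected, equivalently that $M$ is connected — which we may assume after the harmless sign change $\zeta\leftarrow-\zeta$ used in Proposition~\ref{prop3.1}, restricting to a connected component — so one really only needs that each leaf of $\mathscr{S}(\zeta)$ is connected, which follows from the product structure above since $f^{-1}(c)$ is a leaf for every regular value $c$.

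Where the hypothesis ``non totally geodesic at any point'' must enter is in guaranteeing that $B$ restricts to a \emph{definite} form on the $1$-dimensional screen $\mathscr{S}(\zeta)$: on a surface the screen is a line bundle, so for each $p$ the restriction $B_p|_{\mathscr{S}(\zeta)_p}$ is a single real number $B(e,e)$ with $e$ a screen unit vector, and this is nonzero precisely because $B$ does not vanish at $p$ (recall $B(X,\xi)=0$, so $B$ is fully determined by its screen value). Hence by continuity and connectedness $B|_{\mathscr{S}(\zeta)}$ has a fixed sign, i.e.\ it is a definite form in the sense of Proposition~\ref{prop3.1}. Consequently $(M,\widetilde{g})$ carries no closed geodesics and (via $\widetilde{\mathrm{Hess}}\,f=-B$, \cite[Proposition 3.15]{bi0}) a convex or concave function; but in the present argument this is actually subsumed by the stronger topological conclusion coming from $|\widetilde{\nabla}f|=1$, so the non-totally-geodesic hypothesis may only be needed to ensure $\xi$ (and hence the product decomposition) is available with the required regularity, or to rule out the degenerate flat case where $\mathscr{S}(\zeta)$ need not be well-behaved. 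I would double-check at this point whether completeness plus $|\widetilde{\nabla}f|=1$ already implies the global splitting \cite[Theorem 6.2]{bi12} without any compactness of leaves; if a leaf could be noncompact one still gets $M\cong\mathbb{R}\times(\text{leaf})$, and a connected $1$-manifold is $\mathbb{R}$ or $S^1$ regardless, so the classification survives.

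The key steps, in order: (1) from simple connectivity of $\overline{M}$ and closedness of $\zeta$, obtain $f:M\to\mathbb{R}$ with $\widetilde{\nabla}f=\xi$, $|\widetilde{\nabla}f|\equiv1$; (2) reduce to $M$ connected by passing to a component and replacing $\zeta$ by $-\zeta$ if needed; (3) invoke the flow of $\xi$ together with $\widetilde{g}$-completeness to get a diffeomorphism $M\cong\mathbb{R}\times Z$ with $Z=f^{-1}(0)$ a connected $1$-manifold (using \cite[Theorem 6.2]{bi12} as in Theorem~\ref{main}); (4) use the non-totally-geodesic hypothesis to see $B|_{\mathscr{S}(\zeta)}$ is definite, so that $Z$ inherits a genuine leaf structure and, via Proposition~\ref{prop3.1}, $(M,\widetilde{g})$ has no closed geodesics — a fact not strictly needed for the homeomorphism type but consistent with it; (5) conclude $Z\cong\mathbb{R}$ or $Z\cong S^1$, hence $M$ is homeomorphic to the plane or the cylinder. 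I expect step (3) — verifying that $\widetilde{g}$-completeness is exactly the input needed to upgrade the local flow box for $\xi$ to a global product, when the screen leaves are not assumed compact — to be the main technical obstacle; the rest is bookkeeping with facts already established in the excerpt.
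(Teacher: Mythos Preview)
Your argument is correct and in fact cleaner than the paper's. The paper proceeds differently: it observes that in dimension three the screen is a line bundle, so ``non totally geodesic at any point'' is exactly the hypothesis that $B|_{\mathscr{S}(\zeta)}$ is definite; then Proposition~\ref{prop3.1} (equivalently Corollary~\ref{closedgeod}) gives that $(M,\widetilde g)$ has no closed geodesics, and the conclusion follows by quoting Thorbergsson's classification \cite[Theorem 3.2]{GT} of complete surfaces without closed geodesics. Your route instead extracts $f$ with $|\widetilde\nabla f|\equiv 1$, notes that $\widetilde g$-completeness makes the unit field $\xi=\widetilde\nabla f$ complete, and applies the regular interval theorem \cite[Theorem 6.2]{bi12} (exactly as the paper itself does inside the proof of Theorem~\ref{main}, where the compact-leaves hypothesis is \emph{not} used for that step) to split $M\cong\mathbb R\times f^{-1}(0)$; the fibre is then a connected $1$-manifold, hence $\mathbb R$ or $S^1$.

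Two remarks. First, your worry about step~(3) is unfounded: the invocation of \cite[Theorem 6.2]{bi12} in the proof of Theorem~\ref{main} uses only $|\widetilde\nabla f|=1$ and completeness of $\xi$, not compactness of the leaves, so the global splitting goes through here verbatim. Second, your suspicion about the hypothesis is well placed: your argument never uses ``non totally geodesic at any point'', so you have actually proved a slightly stronger statement than the paper. That hypothesis is essential for the paper's route (it is what feeds Proposition~\ref{prop3.1} and hence Thorbergsson), but it is redundant in yours. What the paper's approach buys is a connection to the geometry of closed geodesics and the convexity of $f$ via $\widetilde{\mathrm{Hess}}\,f=-B$; what yours buys is an elementary, self-contained proof that avoids the external classification theorem \cite{GT}.
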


\begin{proof}
From corollary~\ref{closedgeod} the null hypersurface $(M, \widetilde{g})$
contains no closed geodesics. It follows from the classification of complete
surfaces without closed geodesic (see \cite{GT}, Theorem~3.2) that $M$ is
homeomorphic to the plane or the cylinder.
\end{proof}

\begin{coro}
\label{classi} Let $\overline{M}=\mathbb{R}\times_{f}L$ be a 3-dimensional
Generalized-Robertson-Walker (GRW) space with complete Riemannian factor
$(L,g_{0})$. Any topologically closed null surface and non totally geodesic at
any point is homeomorphic to the plane or the cylinder.
\end{coro}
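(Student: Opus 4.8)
The plan is to obtain Corollary~\ref{classi} as a specialization of Theorem~\ref{grw3} together with the no-closed-geodesics mechanism behind Theorem~\ref{thm4b}, taking care that a Generalized-Robertson-Walker space $\mathbb{R}\times_{f}L$ need not be simply connected. First I would equip $M$ with the rigging $\zeta=\sqrt{2}\,\dfrac{\partial}{\partial t}$ of Theorem~\ref{grw3}; since $(L,g_{0})$ is complete and $M$ is topologically closed, that theorem gives at once that the rigged Riemannian structure $(M,\widetilde{g})$ is complete. Next I would note that for this rigging the metrically equivalent $1$-form is $\alpha=\overline{g}(\zeta,\cdot)=-\sqrt{2}\,dt$, which is exact on $\overline{M}=\mathbb{R}\times_{f}L$; hence $\omega=i^{\star}\alpha=-\sqrt{2}\,d(t\circ i)$ is exact on $M$, the normalization $\zeta$ is closed, and the rigged vector field is the gradient $\xi=\widetilde{\nabla}\bigl(-\sqrt{2}\,(t\circ i)\bigr)$. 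This places us exactly in the hypothesis of Remark~\ref{remclosedg}, so that the conclusion of Proposition~\ref{prop3.1} remains valid here even though $\overline{M}$ may fail to be simply connected.

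The second point is a dimension count. Since $\dim\overline{M}=3$ and $\dim M=2$, the screen distribution $\mathscr{S}(\zeta)$ has rank one, so at each $p\in M$ the restriction of the null second fundamental form $B$ to $\mathscr{S}(\zeta)$ is a quadratic form on a line, which is definite exactly when it does not vanish. Because $B(\cdot,\xi)=0$ and $TM=\mathscr{S}(\zeta)\dor TM^{\perp}$, that form vanishes at $p$ if and only if $M$ is totally geodesic at $p$; thus the assumption that $M$ is non totally geodesic at any point says precisely that $B|_{\mathscr{S}(\zeta)}$ is definite everywhere (equivalently, $M$ is a proper totally umbilic null surface). Combining this with the completeness established above and with Remark~\ref{remclosedg}, Proposition~\ref{prop3.1} yields that $(M,\widetilde{g})$ contains no closed geodesics.

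Finally I would quote the classification of complete surfaces without closed geodesics, \cite[Theorem~3.2]{GT}, exactly as in the proof of Theorem~\ref{thm4b}, to conclude that $M$ is homeomorphic to the plane or to the cylinder; if one does not wish to assume $M$ connected from the start, one applies this component by component, each component being again topologically closed and non totally geodesic at any point. The only step that is not a mechanical transcription of earlier results — and hence the main thing to be careful about — is that $\mathbb{R}\times_{f}L$ is simply connected only when $L$ is, so Theorem~\ref{thm4b} cannot be invoked verbatim; the remedy is the elementary observation that the natural rigging $\sqrt{2}\,\partial/\partial t$ makes $\omega$ exact, which is precisely the hypothesis that Remark~\ref{remclosedg} requires.
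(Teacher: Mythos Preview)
Your proposal is correct and follows essentially the same route as the paper: choose the rigging $\zeta=\sqrt{2}\,\partial/\partial t$, use Theorem~\ref{grw3} for completeness, observe that $\omega$ is exact so that Remark~\ref{remclosedg} applies, and then finish via Proposition~\ref{prop3.1} and the Thorbergsson classification as in Theorem~\ref{thm4b}. You are in fact more explicit than the paper on two points the paper leaves implicit: the dimension count showing that ``non totally geodesic at any point'' is equivalent to $B|_{\mathscr{S}(\zeta)}$ being definite when the screen has rank one, and the observation that Theorem~\ref{thm4b} cannot be quoted verbatim because $\mathbb{R}\times_{f}L$ need not be simply connected, which is why the exactness of $\omega$ and Remark~\ref{remclosedg} are invoked.
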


\begin{proof}
Let $M$ be a topologically closed proper totally umbilic null surface.
Consider the normalizing rigging $\zeta= \sqrt{2}\frac{\partial}{\partial t}$
for $M$. Then from Theorem~\ref{grw3}, $(M, \widetilde{g})$ is complete.
Moreover the $1-$form $\omega$ is exact. From Remark~\ref{remclosedg} and
Theorem~\ref{thm4b}, $M$ is homeomorphic to the plane or the cylinder.
\end{proof}

\subsection{Totally geodesic null hypersurfaces in Robertson-Walker spaces}

Totally geodesic null hypersufaces are intensively used in general relativity
as they represent horizons of various sorts (Non expanding horizon, isolated
horizon, Killing horizon, etc.). We investigate here the existence of totally
geodesic null hypersurfaces in Robertson-Walker spaces. For this, we use the
Hilbert theorem which we recall here.

\begin{thm}
(\cite{Hilb}, \cite{JG})\label{Hilbert} Let $\Sigma$ be a complete surface
with negative constant curvature $K$. Then, there exists no isometric
immersion $\left.  f:\Sigma\rightarrow\mathbb{M}^{3}(c)\right.  $ (with $K<-1$
for $c=-1$), where $\mathbb{M}^{3}(c)$ stands for the simply-connected
complete Riemannian $\left.  \text{3-space}\right.  $ with constant sectional
curvature $c=-1,0,1$.
\end{thm}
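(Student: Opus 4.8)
The plan is to reproduce Hilbert's classical argument, carrying the constant $c$ through so that the ambient is an arbitrary space form $\mathbb{M}^3(c)$. Suppose, for contradiction, that an isometric immersion $f\colon\Sigma\to\mathbb{M}^3(c)$ exists under the stated hypotheses. First I would reduce to the simply connected case: lifting $f$ through the universal covering $\widetilde{\Sigma}\to\Sigma$ yields an isometric immersion of $\widetilde{\Sigma}$, which is again complete with constant Gaussian curvature $K$ and is now simply connected; by the Killing--Hopf classification of space forms it is globally isometric to the model surface of curvature $K$, and since $K<0$ that model has infinite area. Hence it suffices to bound the area of this simply connected complete surface, which I still denote $\Sigma$, above by a finite constant.

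Next I set up asymptotic (Tchebyshef) coordinates. The Gauss equation of a surface in $\mathbb{M}^3(c)$ is $K=c+k_1k_2$ with $k_1,k_2$ the principal curvatures; the hypotheses are arranged precisely so that $K<c$ in every case ($K<0$ already gives $K<c$ when $c=0$ or $c=1$, while $K<-1$ is exactly the extra condition needed when $c=-1$). Thus $k_1k_2<0$ everywhere, the shape operator is everywhere indefinite, and there are two smooth, nowhere tangent fields of asymptotic directions; on the simply connected $\Sigma$ these integrate to two global foliations by asymptotic curves. Parametrising each asymptotic curve by arc length produces coordinates $(u,v)$ in which the Codazzi equations --- which in a space form are identical to the Euclidean ones, the ambient $c$ entering only through the Gauss equation --- force the net to be Tchebyshef: $ds^2=du^2+2\cos\omega\,du\,dv+dv^2$, with $0<\omega(u,v)<\pi$ the angle between the asymptotic curves, and area element $\sin\omega\,du\,dv$.

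Now constant curvature is used. Since the relation $K=-\omega_{uv}/\sin\omega$ for a Tchebyshef net is purely intrinsic, constancy of $K$ turns it into the sine-Gordon-type equation $\omega_{uv}=-K\sin\omega=|K|\sin\omega$. Consequently the area of any coordinate rectangle $[u_1,u_2]\times[v_1,v_2]$ equals $\tfrac{1}{|K|}\bigl(\omega(u_2,v_2)-\omega(u_2,v_1)-\omega(u_1,v_2)+\omega(u_1,v_1)\bigr)$, a combination of four values of $\omega$ in $(0,\pi)$, hence strictly less than $2\pi/|K|$. It then remains to show that the asymptotic parametrisation extends to a global diffeomorphism $\mathbb{R}^2\to\Sigma$: because $\Sigma$ is complete and $|K|$ constant the coordinate vector fields have controlled length, so each asymptotic curve prolongs to all parameter values, and simple connectedness prevents the two foliations from failing to assemble into a single global chart. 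Covering $\Sigma$ by the four quadrants of this chart and exhausting each by rectangles gives $\mathrm{area}(\Sigma)\le 8\pi/|K|<\infty$, contradicting the infinite area found in the first step.

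The step I expect to be the real obstacle is the last one --- establishing that the asymptotic (Tchebyshef) coordinates are globally defined and surjective onto $\Sigma$. This is precisely where completeness is indispensable, and it needs a genuine continuation/monodromy argument in the spirit of do Carmo's treatment of Hilbert's theorem, rather than a local computation: one must exclude that an asymptotic curve leaves the surface in finite parameter time and verify that the two foliations patch together globally. Everything else --- the two forms of the Gauss equation, the Tchebyshef property via Codazzi, and the telescoping area estimate --- is routine manipulation of the structure equations.
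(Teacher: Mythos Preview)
The paper does not prove this theorem at all: it is stated with attributions to Hilbert and G\'alvez and then invoked as a black box in the proof of Theorem~\ref{rober}. There is therefore no ``paper's own proof'' to compare your proposal against.

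Your sketch is a faithful outline of the classical Hilbert argument, correctly adapted to ambient curvature $c$: the key observation that the hypothesis $K<c$ makes the second fundamental form indefinite, the Tchebyshef net from the Codazzi equations, the sine--Gordon relation $\omega_{uv}=|K|\sin\omega$, and the telescoping area bound are all standard and correct. Your self-identified weak point --- globality of the asymptotic parametrisation --- is indeed the genuine content of the theorem and would require the full continuation argument (as in do~Carmo or Spivak) to make rigorous; as written it is a plan rather than a proof. One minor remark: once the chart is global on $\mathbb{R}^2$, a single exhausting family of rectangles $[-R,R]^2$ already gives $\mathrm{area}(\Sigma)\le 2\pi/|K|$, so the four-quadrant step and the constant $8\pi/|K|$ are unnecessary, though harmless.
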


We prove the following.

\begin{thm}
\label{rober} Let $\overline{M}=\mathbb{R}\times_{f}\mathbb{M}^{3}(c)$ be a
Robertson-Walker space. Then the followings hold:

\begin{itemize}
\item[1.] If $c=0 $ or $c=-1$ and $f$ is strictly monotone, then there is no
topologically closed totally geodesic null hypersurface in $\overline{M}$.

\item[2.] If $c=1$ and $f^{\prime}(t)>1\, \forall\; t$, then there is no
topologically closed totally geodesic null hypersurface in $\overline{M}$.
\end{itemize}
\end{thm}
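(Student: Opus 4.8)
The plan is to derive a contradiction from the existence of a topologically closed totally geodesic null hypersurface $M$ in $\overline{M}=\mathbb{R}\times_{f}\mathbb{M}^{3}(c)$, by first endowing $M$ with a complete rigged Riemannian structure, then exploiting the splitting theorems of Section~\ref{warpeddecomp} to see that a leaf of the screen distribution is a complete surface of constant negative curvature isometrically immersed in $\mathbb{M}^{3}(c)$, which is forbidden by Hilbert's Theorem~\ref{Hilbert}. First I would observe that $\overline{M}$ is a Generalized-Robertson-Walker space with complete Riemannian factor, so by Theorem~\ref{grw3} the rigging $\zeta=\sqrt{2}\,\partial/\partial t$ makes $(M,\widetilde{g})$ complete; alternatively, in case~1 with $f$ strictly monotone one may use the natural rigging $\zeta=f\,\partial/\partial t$ together with Proposition~\ref{grw1}, since a primitive $h$ of $-f$ is then strictly monotone, hence proper, and $h\circ i$ is proper on the topologically closed set $M$. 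In both cases $\zeta$ is closed (indeed a gradient), so the rigged one-form $\omega$ is exact and by Remark~\ref{remgrad} the warped-product splitting from Theorem~\ref{warp1} (applied with $\overline{M}$ of constant curvature when $\mathbb{M}^{3}(c)$ is a space form, or Proposition~\ref{warp2} for the Robertson-Walker case) holds globally: $(M,\widetilde{g})$ is isometric to $(\mathbb{R}\times S,\,dr^{2}+F^{2}g_{0})$, where $S$ is a leaf of $\mathscr{S}(\zeta)$ and $g_{0}$ is conformal to $g_{|S}$.

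Next I would pin down the geometry of the leaf $S$. Since $M$ is totally geodesic, the screen-conformality discussion and Proposition~\ref{prop4}-type computation (specifically the identity $\widetilde{\nabla}_{X}Y=\overset{\star}{\nabla}_{X}Y+B(X,Y)\xi$ valid for $X,Y\in\mathscr{S}(\zeta)$ when the screen is integrable) show that $S$ is totally geodesic in $(M,\widetilde{g})$, and hence, composing with the inclusion $M\hookrightarrow\overline{M}$, that $S$ is a totally geodesic nondegenerate codimension-two submanifold of $\overline{M}$. Because $\mathscr{S}(\zeta)\subset T\mathbb{M}^{3}(c)$ for the Robertson-Walker rigging (the rigged field has a nontrivial $\partial/\partial t$-component but the screen is orthogonal to $\partial/\partial t$), $S$ in fact lies inside a fiber $\{t_{0}\}\times\mathbb{M}^{3}(c)$, which is a totally umbilic (indeed a constant multiple of totally geodesic after rescaling) copy of the space form; thus $S$ is isometrically immersed, up to the conformal factor, in $\mathbb{M}^{3}(c)$. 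Using the Gauss equation for the totally geodesic surface $S$ in the constant-curvature $3$-space $\mathbb{M}^{3}(c)$ one gets that the induced metric $g_{|S}$ has constant curvature equal to that of the fiber, namely $c/f(t_{0})^{2}$ (or $K_{\overline{M}}$ evaluated appropriately); I would record this as the curvature of the complete surface $(S,g_{|S})$.

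Finally I would apply Hilbert's theorem. When $c=0$ the fibers are flat, but strict monotonicity of $f$ forces $\{t_{0}\}\times\mathbb{M}^{3}(0)$ to have the wrong sign or, more precisely, forces the warping identity to fail the mixed-Ricci-flat constraint unless $f$ is constant, contradicting strict monotonicity; here the contradiction comes directly from the splitting obstruction rather than Hilbert. When $c=-1$ and $f$ strictly monotone, the leaf $S$ has constant curvature $-1/f(t_{0})^{2}<0$, and since $f$ is strictly monotone it is non-constant so $f(t_{0})$ can be taken with $f(t_{0})>1$ along part of the range (or one argues the constant curvature is $<-1$ after normalizing the ambient $3$-space to $c=-1$), giving a complete surface of constant curvature $<-1$ isometrically immersed in $\mathbb{M}^{3}(-1)$, contradicting Theorem~\ref{Hilbert}. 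When $c=1$ and $f'(t)>1$ everywhere, one shows the leaf $S$ again carries a complete metric of constant curvature which, via the Gauss equation and the bound $f'>1$, is negative and strictly below $0$, while it must immerse in $\mathbb{M}^{3}(1)$; Hilbert's theorem (the $c=-1,0,1$ versions, noting that an immersion into $\mathbb{S}^{3}$ is a fortiori ruled out for constant negative curvature) yields the contradiction. The main obstacle I anticipate is the careful bookkeeping of the conformal factor $g_{0}$ versus $g_{|S}$ and of the warping function, so that the curvature of the leaf is identified precisely enough to match the hypotheses ($f$ strictly monotone in case~1, $f'>1$ in case~2) with the quantitative threshold ($K<-1$ for $c=-1$) required by Hilbert's theorem; getting the sign and normalization right in the Gauss equation for a totally geodesic surface inside a fiber of a warped product is where the real work lies.
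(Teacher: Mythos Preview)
There is a genuine gap in your curvature computation for the screen leaf $S$. You assert that $S$, being totally geodesic in $(M,\widetilde{g})$, is therefore a totally geodesic codimension-two submanifold of $\overline{M}$, and hence totally geodesic in the slice $\{t_0\}\times\mathbb{M}^3(c)$, giving it constant curvature $c/f(t_0)^2$. Both implications fail: $\widetilde{g}$ is not the restriction of $\overline{g}$ to $M$, so total geodesy in $(M,\widetilde{g})$ says nothing directly about the second fundamental form of $S$ in $(\overline{M},\overline{g})$; and the slices of a Robertson--Walker space are only totally umbilic (with umbilicity factor $f'/f$), not totally geodesic. The value $c/f(t_0)^2$ is therefore wrong, and your case analysis collapses exactly where you noticed it wobbling: for $c=0$ the leaf would be flat, for $c=1$ positively curved, and for $c=-1$ you cannot force $K<-1$. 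The ad hoc ``mixed-Ricci-flat obstruction'' you invoke for $c=0$ is not available either: the splitting of Proposition~\ref{warp2} holds for any $f$, so nothing forces $f$ to be constant.

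The paper's argument is both simpler and avoids this trap; it does not use the warped splitting at all. With the rigging $\zeta=\sqrt{2}\,\partial_t$ one has $\widetilde{g}|_S=f(t_0)^2 g$, so $(S,\widehat{g}:=f(t_0)^{-2}\widetilde{g}|_S)$ is a closed, hence complete, surface isometrically immersed in $\mathbb{M}^3(c)$. Its curvature is read off from the Gauss-type identity $\overline{K}(X,Y)=\widetilde{K}^S(X,Y)$ for $X,Y\in\mathscr{S}(\zeta)$ (all $B$-terms drop since $M$ is totally geodesic), and for a Robertson--Walker space the sectional curvature of a $2$-plane tangent to a slice is $\dfrac{c-(f')^2}{f^2}$, \emph{not} $c/f^2$. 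After rescaling, $\widehat{K}=c-(f')^2(t_0)$: it is the term $(f')^2$ that makes $\widehat{K}$ negative (and $<-1$ when $c=-1$), which is precisely why the hypotheses are ``$f$ strictly monotone'' in case~1 and ``$f'>1$'' in case~2. Hilbert's theorem then applies directly.
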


\begin{proof}
Let $\overline{M}=\mathbb{R}\times_{f}\mathbb{M}^{3}(c)$ be a Robertson-Walker
space. Suppose there exist a topologically closed totally geodesic null
hypersuface $M$ in $\overline{M}$. Consider the normalizing rigging
$\zeta=\sqrt{2}\frac{\partial}{\partial t}$ for $M$. Let, $\Pi:\mathbb{R}%
\times\mathbb{M}^{3}(c)\rightarrow\mathbb{R}$ be the projection on the first
factor. Then a leaf of $\mathscr{S}(\zeta)$ is the intersection of $M$ with a
fiber of $\Pi$, hence a leaf of $\mathscr{S}(\zeta)$ is a closed subset
contained in some slice $\{t_{0}\}\times\mathbb{M}^{3}(c)$. Let us call $g$
the Riemannian metric on $\mathbb{M}^{3}(c)$. Since the Lorentzian metric on
$\overline{M}$ is given by $\overline{g}=-dt^{2}+f^{2}g$, using the equalities
in (\ref{garc}) we get that $\widetilde{g}_{\lvert_{S}}=f^{2}(t_{0})g$ so that
$(S,\widehat{g}=\frac{1}{f^{2}(t_{0})}\widetilde{g}_{\lvert_{S}})$ is a
complete surface isometrically immersed in $\mathbb{M}^{3}(c)$. Elsewhere, the
following relation (\cite{bi0}) holds:
\begin{align*}
&  \overline{K}(X,Y)\\
&  =\widetilde{K}^{S}(X,Y)-C(X,X)B(Y,Y)-B(X,X)C(Y,Y)+2C(X,Y)B(X,Y)
\end{align*}
$\forall\;X,Y\in\mathscr{S}(\zeta)$, where $\overline{K}(X,Y)$ is the
sectional curvature in $(\overline{M},\overline{g})$ and $\widetilde{K}%
^{S}(X,Y)$ the induced sectional curvature from $(M,\widetilde{g})$ that is
$\widetilde{{K}}^{S}(X,Y)$ is the Gauss curvature of $(S,\widetilde{g}%
_{\lvert_{S}})$. Since the null hypersurface $M$ is totally geodesic, $B=0$.
Hence $\widetilde{K}^{S}(X,Y)=\overline{K}(X,Y)=\frac{c-(f^{\prime})^{2}%
}{f^{2}}$. Note that $S$ being contained in some slice $\{t_{0}\}\times
\mathbb{M}^{3}(c)$, $\widetilde{K}^{S}(X,Y)=\frac{c-(f^{\prime})^{2}(t_{0}%
)}{f^{2}(t_{0})}$ and is constant. Finally since $\widehat{g}=\frac{1}%
{f^{2}(t_{0})}\widetilde{g}_{\lvert_{S}}$, the Gauss curvature of the surface
$(S,\widehat{g})$ is $\widehat{K}=c-(f^{\prime})^{2}(t_{0})$ If $c=0$ or
$c=-1$ and $f$ is strictly monotone or $c=1$ and $f^{\prime}(t)>1\,\forall\;t$
then $(S,\widehat{g})$ has negative constant Gauss curvature (with
$\widehat{K}<-1$ in case $c=-1$). The contradiction follows from the Hilbert Theorem.
\end{proof}

In (\cite{Ga}, TheoremIV.1), Galloway shows that if a Lorentzian manifold is
null complete and satisfy the null convergence condition then any null line is
conained in a smooth (topologically) closed achronal totally geodesic null
hypersurface. So under null completeness and null convergence condition
hypothesis, the absence of topologically closed totally geodesic null
hypersurface implies the absence of null line. The following holds:

\begin{coro}
\label{corline} Let $\overline{M}=\mathbb{R}\times_{f}\mathbb{M}^{3}(c)$ be a
null complete Robertson-Walker space satifying the null convergence condition.
Then we have:

\begin{itemize}
\item[1.] If $c=0$ or $c=-1$ and $f$ is strictly monotone, $\overline{M}$
contains no lightlike line.

\item[2.] If $c=1$ and $f^{\prime}(t)>1\,\forall\;t$, $\overline{M}$ contains
no lightlike line.
\end{itemize}
\end{coro}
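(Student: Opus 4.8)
The plan is to obtain this as an immediate consequence of Theorem~\ref{rober} combined with Galloway's theorem quoted just above (\cite{Ga}, Theorem~IV.1). First I would argue by contradiction. Suppose $\overline{M}=\mathbb{R}\times_{f}\mathbb{M}^{3}(c)$ contains a lightlike line $\gamma$. Recall that a lightlike line is, by definition, an inextendible achronal null geodesic; this is the only point where one must be slightly careful about definitions, since it is precisely the achronality and completeness of $\gamma$ that make Galloway's result applicable. By hypothesis $\overline{M}$ is null complete and satisfies the null convergence condition, so Galloway's theorem applies to $\gamma$ and yields a smooth, topologically closed, achronal, totally geodesic null hypersurface $M\subset\overline{M}$ with $\gamma\subset M$.

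Next I would invoke Theorem~\ref{rober} with the hypotheses split exactly as in the statement of the corollary: in case~1, either $c=0$, or $c=-1$ with $f$ strictly monotone; in case~2, $c=1$ with $f'(t)>1$ for all $t$. Under either set of assumptions Theorem~\ref{rober} asserts that $\overline{M}$ admits no topologically closed totally geodesic null hypersurface. Since the hypersurface $M$ produced by Galloway's theorem is in particular a topologically closed totally geodesic null hypersurface, this is a contradiction. Hence $\overline{M}$ can contain no lightlike line, which proves both assertions.

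The argument is essentially a bookkeeping step, so I do not expect a genuine obstacle: the only thing to verify is that the hypotheses of the two cited results line up, and they do by construction. The null completeness and null convergence condition assumed in the corollary feed directly into Galloway's theorem, while the curvature and monotonicity conditions on $c$ and $f$ feed directly into Theorem~\ref{rober}; no additional estimates or constructions are needed beyond those already carried out in the proof of Theorem~\ref{rober}.
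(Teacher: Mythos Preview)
Your proposal is correct and follows essentially the same approach as the paper: the paper's proof is precisely the contrapositive you describe, combining Galloway's theorem (to produce a closed totally geodesic null hypersurface from a null line) with Theorem~\ref{rober} (to rule out such hypersurfaces under the stated conditions on $c$ and $f$). No further argument is needed.
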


\textbf{Aknowledgment.} The second and third authors has been partially
supported by a FEDER-MTM2016-78647-P grant. The third author has been
partially supported by a CEA-SMA grant.


\end{document}